\theoremstyle{plain}
\newtheorem{theorem}{Theorem}[section]
\newtheorem{proposition}[theorem]{Proposition}
\newtheorem{corollary}[theorem]{Corollary}
\newtheorem{lemma}[theorem]{Lemma}
\newtheorem{conjecture}[theorem]{Conjecture}
\newtheorem*{theorem*}{Theorem}
\newtheorem*{proposition*}{Proposition}
\newtheorem*{corollary*}{Corollary}
\newtheorem*{lemma*}{Lemma}
\newtheorem*{conjecture*}{Conjecture}
\theoremstyle{definition}
\newtheorem{definition}[theorem]{Definition}
\newtheorem{example}[theorem]{Example}
\newtheorem*{definition*}{Definition}
\newtheorem*{example*}{Example}
\theoremstyle{remark}
\newtheorem{remark}[theorem]{Remark}
\newtheorem*{remark*}{Remark}
\newtheorem{fact}[theorem]{Fact}
\newcommand{\bC}{\mathbb{C}}
\newcommand{\bP}{\mathbb{P}}
\newcommand{\cC}{\mathcal{C}}
\newcommand{\cO}{\mathcal{O}}
\newcommand{\cM}{\mathcal{M}}
\newcommand{\sS}{\mathscr{S}}
\newcommand{\Pic}{\operatorname{Pic}}
\newcommand{\BVA}{\operatorname{BVA}}
\newcommand{\Hilb}{\operatorname{Hilb}}
\renewcommand\bar\overline
\renewcommand\tilde\widetilde
\newcommand{\Sym}{\operatorname{Sym}}
\renewcommand{\geq}{\geqslant}
\renewcommand{\leq}{\leqslant}
\newcommand{\PP}{\bP}
\newcommand{\gon}{\operatorname{gon}}
\newcommand{\irr}{\operatorname{irr}}
\newcommand{\cg}{\operatorname{cov.gon}}
\newcommand{\cng}{\operatorname{conn.gon}}
\newcommand\blfootnote[1]{%
    \begingroup
    \renewcommand\thefootnote{}\footnote{#1}%
    \addtocounter{footnote}{-1}%
    \endgroup
}
\title[Irrationality of the Fano surface]{Measures of irrationality of the Fano surface of a cubic threefold}
\begin{document}

\author{Frank Gounelas}
\address{Mathematisches Institut \\
        Humboldt Universit\"at Berlin \\
        10099 Berlin\\
        Germany.}
\email{gounelas@mathematik.hu-berlin.de}

\author{Alexis Kouvidakis}
\address{Dept. of Mathematics and Applied Mathematics \\
        University of Crete \\
        70013 Heraklion \\
        Greece.}
\email{kouvid@uoc.gr}

\subjclass[2010]
{ 
    14H10, 
    14J30, 
    14M15, 
    14N20, 
    14M20  
}

\keywords{Cubic threefold, Fano scheme of lines, Covering gonality, Degree of irrationality.}

\begin{abstract} 
For $X$ a smooth cubic threefold we study the Pl\"ucker embedding of the Fano surface of lines $S$ of $X$. We prove that
if $X$ is general then the minimal gonality of a covering family of curves of $S$ is four and that this happens for a
unique family of curves. The analysis also shows that there is a unique pentagonal connecting family of curves, which
leads to the fact that the connecting gonality of $S$ is five whereas the degree of irrationality, i.e.\ the minimal
degree of a rational map from $S$ to $\PP^2$, is six.
\end{abstract}

\maketitle

\section{Introduction}

\blfootnote{\today}

Establishing whether a given smooth projective variety is rational, namely birational to projective space, is a
classical and hard problem. Advances in resolution of singularities and cohomological techniques have led to spectacular
advances in such questions over the past fifty years, but as we lack a numerical criterion for rationality in higher
dimensions, results are obtained very much on a case by case basis. On the other hand recent developments in the
decomposition of the diagonal \cite{voisinbook}, have led to further clarity on the behaviour of rationality in families
of smooth varieties \cite{htp}. Given though that rational varieties are only a small part of the overall geography of
varieties, it is natural to attach invariants to any projective variety which measure how far it is either from being
rational or at least covered by rational curves. It turns out there are some very natural candidates for such invariants
and their behaviour follows the same patterns as rationality, uniruledness and rational connectedness respectively. We
begin by recalling the definitions of these invariants, the ideas going back to \cite{lopezpirola},
\cite{yoshihara} (see \cite{bdelu} for further references).

We work over the complex numbers $\bC$ throughout the paper. Let $C$ be a (possibly singular) integral curve. We define the
\textit{gonality} $\gon(C)$ to be the least integer $d$ so that there exists a degree $d$ morphism from the normalisation of $C$
to $\bP^1$. In this paper we are concerned with the following invariants.

\begin{definition}
    The \textit{covering gonality} $\cg(S)$ of an irreducible projective variety $S$ is the least integer $d$ such that
    through a general point of $S$ there passes an integral $d$-gonal curve. Similarly, the minimal $d$ so that through
    two general points of $S$ there passes an integral $d$-gonal curve is called the \textit{connecting gonality} $\cng(S)$.
\end{definition}

We point out that $\cg(S)=1$ (resp.\ $\cng(S)=1$) if and only if $S$ is uniruled (resp.\ rationally connected). In other
words, these two invariants measure how far a variety is from being uniruled or rationally connected. Analogously, the
following invariant measures how far a variety is from being rational.

\begin{definition}
    Let $S$ be an irreducible projective variety. One defines the \textit{degree of irrationality} of $S$ as the integer
    $$\irr(S) := \min\{n : \text{there is a dominant map }S\dashrightarrow\bP^{\dim S}\text{ of degree $n$}\}.$$
\end{definition}

It is easy to see (by pulling back lines from $\PP^{\dim S}$) that $$\cg(S)\leq\cng(S)\leq\irr(S).$$ The problem of
determining the above invariants is thus closely related to the existence of special families of curves on $S$. The key
point of the following definition is that the base of the family is not assumed projective and that every fibre maps
birationally onto its image.

\begin{definition}
    Let $S$ be an irreducible projective variety and $\cC\to T$ a smooth projective morphism over a quasi-projective
    irreducible base variety $T$, with irreducible fibres of dimension one. We say that $\cC/T$ is a \textit{covering
    family} of $S$, if there exists a dominant morphism $F:\cC\to S$ so that the restriction $F_t:\cC_t\to S$ to the
    general fibre is birational. We say that $\cC\to T$ is a \textit{connecting family} if in addition the induced
    $\cC\times_T\cC \to S\times S$ is dominant. Furthermore, we say that $\cC/T$ is a \textit{covering family of
    $d$-gonal curves} (resp.\ \textit{connecting family of $d$-gonal curves}) if the general fibre $\cC_t$ has gonality
    $d$.
\end{definition}

In Section \ref{defsection} we study the behaviour of the above invariants in families, in particular proving that $\cg$
does not increase under specialisation (a property also satisfied by gonality in a family of curves) and that it is lower
semicontinuous. We note that these results extend the corresponding results on uniruled varieties of \cite[IV
1.8.2]{kollar}. The techniques used are similar in nature to loc.\ cit.\ and draw from the theory of relative Hilbert
schemes, relative Kontsevich moduli spaces and admissible covers. We note that the behaviours of connecting gonality and
degree of irrationality in families are more subtle. For example $\irr$ is not lower semicontinuous, since as seen in
\cite{htp} there exists a smooth family of varieties where the very general fibre is not rational (in particular
$\irr>1$) yet countably infinitely many fibres are rational (cf.\ \cite{dff}). On the other hand it has been proven
recently \cite{ns}, \cite{kt} that a (mildly singular) specialisation of smooth rational varieties must be rational. It
would be interesting to know whether these results imply that $\irr$ can only drop under specialisation in a smooth
family. See Remark \ref{remarkconngon} for some observations on the behaviour of $\cng$.

A flurry of recent work computes and bounds these invariants for various types of varieties, for example hypersurfaces
\cite{bdelu}, \cite{bcfs} and symmetric products of curves \cite{bastianelli} (see other references therein). The aim of
this paper is to study these invariants for the Fano scheme of lines of a cubic hypersurface. Applications of this analysis
are twofold. Firstly, we can compute explicitly the above invariants in many new interesting cases by exploiting
Grassmannian geometry. Secondly, by obtaining for example the covering gonality of the Fano scheme of lines one deduces
information about the various surfaces in the hypersurface itself which are ruled by lines.

We begin with some background on Fano schemes of lines in the first case of interest, namely for cubic hypersurfaces.
Let $n\geq 3$ and $X\subset\bP^{n+1}_\bC$ a smooth cubic hypersurface and $F(X)\subset \rm{G}(2,n+2)$ the Fano scheme
parametrising lines contained in $X$. From \cite[1.3, 1.12, 1.16]{ak} we know that $\dim F(X)=2(n-2)$ and $F(X)$ is a
smooth irreducible variety. In \cite[1.8]{ak} it is proven that the canonical line bundle $K_{F(X)}$ is isomorphic to
$\cO_{F(X)}(4-n)$ where $\cO_{F(X)}(1)$ is the very ample line bundle coming from the Pl\"ucker embedding $F(X)\subset
\rm{G}(2,n+2)$. In particular, if $n\geq5$ then $F(X)$ is Fano and hence rationally connected. This implies
$\cg(F(X))=\cng(F(X))=1$.

If $n=4$ then from \cite{bd} the Fano scheme of lines $F(X)$ is a smooth hyperk\"ahler fourfold and as Voisin points out
in her notes \cite[Example 2.18]{voisinnotes}, we know that $X$ admits a two dimensional family of hyperplane sections,
every member of which is a cubic threefold with exactly three isolated nodes. The Fano surface of each of
these nodal threefolds sits inside $F(X)$. We know though that such a Fano surface is irreducible and is dominated by the
symmetric product of a $(2,3)$ curve in $\bP^3$ with two nodes, sitting inside a quadric cone. In particular these
surfaces are dominated by the self-product of a smooth genus two (hence hyperelliptic) curve, and so have covering
gonality two. Since these Fano surfaces move in a two dimensional family, they cover $F(X)$, which hence also has
covering gonality two as it is not uniruled.

In this paper we begin by studying the remaining case of the covering gonality of Fano schemes of lines of cubic
hypersurfaces.

\begin{theorem}\label{covgongen}
    Let $X$ be a general cubic threefold and $S$ its Fano surface. Then $$\cg(S)=4.$$ For any smooth cubic threefold, we
    have $$3\leq\cg(S)\leq4.$$
\end{theorem}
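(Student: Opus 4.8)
The plan is to establish the three estimates $\cg(S)\ge 3$ (every smooth $X$), $\cg(S)\le 4$ (every smooth $X$), and $\cg(S)\ge 4$ (general $X$) in turn, so that the stated bounds and the equality for general $X$ follow.

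For $\cg(S)\ge 3$ I would use that $K_S\cong\cO_S(1)$ is ample, so $S$ is of general type and hence covered neither by rational nor by elliptic curves, giving $\cg(S)\ge 2$. If $\cg(S)=2$, the general member $C_t$ of a covering family $\cC\to T$ would be a smooth hyperelliptic curve of genus $\ge 2$ mapping birationally onto $\bar C_t\subset S$; since the canonical map of $C_t$ factors through $\bP^1$, every section of $\omega_{C_t}$ — in particular every element of the image of $H^0(S,\omega_S)\to H^0(C_t,\nu_t^*\omega_S)\hookrightarrow H^0(C_t,\omega_{C_t})$ (adjunction along $\nu_t\colon C_t\to\bar C_t$, valid for the general, hence mildly singular, member of a minimal covering family) — is invariant under the hyperelliptic involution. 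Hence $|\cO_S(1)|=|\omega_S|$ fails to separate the two (distinct, for general $t$ and general point) points of a fibre of the $g^1_2$, contradicting the very ampleness of the Pl\"ucker polarisation. I expect this to be a surface incarnation of the semicontinuity and positivity results of Section \ref{defsection}, and to be essentially formal.

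For $\cg(S)\le 4$ the plan is to use incidence curves of special lines. For $\ell\subset X$ put $C_\ell=\{m\in S:m\cap\ell\ne\varnothing\}$; for a general point $y\in\ell$ the map $C_\ell\to\ell\cong\bP^1$, $m\mapsto m\cap\ell$, has degree $6-\mu_\ell(y)$, where $\mu_\ell(y)$ is the multiplicity of $[\ell]$ in the (length $6$) scheme $S\cap\{\text{lines through }y\}$. A tangent-space computation in $G(2,5)$ gives $\mu_\ell(y)=1+\dim H^0\!\big(\ell,N_{\ell/X}(-y)\big)$ (generically), which is $1$ when $\ell$ is of the first type ($N_{\ell/X}\cong\cO_\ell^{\oplus2}$) but is $2$ when $\ell$ is of the second type ($N_{\ell/X}\cong\cO_\ell(1)\oplus\cO_\ell(-1)$). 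So for $\ell$ of the second type $C_\ell$ (or its relevant irreducible component) carries a $g^1_4$. The second-type lines form a curve $\mathfrak C_2\subset S$ classically, and their union is a surface in $X$, which meets every line of $X$; hence $\{C_\ell\}_{\ell\in\mathfrak C_2}$ is a covering family of $4$-gonal curves and $\cg(S)\le 4$. (For $\ell$ of the first type one gets instead the pentagonal — and in fact connecting — family witnessing $\cng(S)\le 5$.) The things to verify here are the multiplicity formula and that this family satisfies the bookkeeping in the definition of a covering family, together with its uniqueness.

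Finally, for $\cg(S)=4$ when $X$ is general it remains to exclude a covering family of trigonal curves. For such a family the (for genus $\ge5$ unique) $g^1_3$ on the general member $C_t$ has fibres of three collinear points in the canonical embedding, and since $C_t\to S\subset\bP^9$ is a linear projection of (a subsystem of) the canonical map — again via $\nu_t^*\omega_S\hookrightarrow\omega_{C_t}$ — these fibres map to collinear triples on $S$, producing trisecant lines of $S$ through its general point. As $G(2,5)\subset\bP^9$ is cut out by quadrics, every trisecant line of $S$ lies in $G(2,5)$, hence is a pencil of lines of $\bP^4$ containing three lines of $X$; through a general $m\in S$ this forces a plane $P\supseteq m$ with $P\cap X$ a cone at some point $y\in m$, and a local computation shows this requires $P$ to lie in the tangent hyperplane $T_yX$ with its pencil direction lying on the conic cut out by the second fundamental form of $X$ at $y$ — impossible when $y$ is general, since that conic is then smooth. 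Thus for general $X$ no trisecant of $S$ meets its general point, so $\cg(S)\ne 3$. I expect the main obstacle to be precisely this last step — controlling the trisecant geometry of $S\subset\bP^9$ and pinning down where genericity of $X$ is genuinely used — along with making the identification and gonality computation of the $4$-gonal family rigorous; the exclusions of rational, elliptic and hyperelliptic covering families are comparatively soft.
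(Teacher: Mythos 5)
Your first two steps track the paper's argument closely. The lower bound $\cg(S)\geq 3$ is exactly the Cayley--Bacharach/$\BVA_1$ argument (Lemma \ref{bva1} combined with Proposition \ref{bvapimpliescovgon}, i.e.\ \cite[1.10]{bdelu}): a fibre $p_1+p_2$ of the $g^1_2$ of a hyperelliptic covering family would satisfy the Cayley--Bacharach condition with respect to $|K_S|$, contradicting very ampleness of the Pl\"ucker polarisation. The upper bound $\cg(S)\leq 4$ via the $g^1_4$ on incidence divisors $D_\ell$ of second-type lines (projection to $\ell$, using that $[\ell]$ counts with multiplicity two among the six lines through a general point of $\ell$) is also the paper's route, via Fact \ref{factlines} and Lemma \ref{gonalitydell}.

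Your third step, however, contains a genuine gap, and it is precisely at the point you flag as the main obstacle. You correctly reduce a trigonal covering family to the existence, through a general point of $S$, of trisecant lines of $S$ contained in $\rm{G}(2,5)$, i.e.\ triples of coplanar concurrent lines of $X$. But your conclusion that ``for general $X$ no trisecant of $S$ meets its general point'' is \emph{false}: the paper's Proposition \ref{bva1notbva2} proves the opposite, namely that through the general point of $S$ there pass \emph{five} such trisecants. The reason your local computation fails is that the concurrency point $y$ is not a general point of $X$ at your disposal: by Lemma \ref{triplepointisspecial}, three of the six lines through $y$ are coplanar exactly when $y$ lies on the Hessian hypersurface $H$ of $X$, which is a quintic, so a general line $m\subset X$ meets $X\cap H$ in five points, each producing a trisecant through $[m]$. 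The singularity of the second fundamental form conic is a codimension-one condition on $y\in X$, not an empty one along $m$. What is actually needed --- and what the paper does --- is to show that these trisecants cannot be organised into a covering family of trigonal curves: sending a fibre of the $g^1_3$ to the concurrency point of its three lines defines a nonconstant morphism from the base $\PP^1$ of the $g^1_3$ into $X\cap H$, and varying the member of the family would make $X\cap H$ uniruled (Lemma \ref{3.1}). Ruling this out requires a genuinely global input: the paper degenerates to the Klein cubic and uses Adler--Ramanan's explicit resolution of its Hessian to show $X\cap H$ is birational to a surface with big canonical bundle, hence not uniruled (Proposition \ref{kleinnotuniruled} and Corollary \ref{generalhessiannotuniruled}). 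Without some replacement for this step your argument does not close.
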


Since $K_S$ is very ample it is not hard to see also that for any smooth cubic threefold, the covering gonality
of $S$ is always at least three (see Lemma \ref{bva1}), so the point of the theorem is to show that four is
achieved for any smooth cubic threefold $X$ and that three does not happen for the general one. Next, we prove that
covering gonality four is in fact achieved by an (essentially) unique explicit family of curves and the same for
connecting gonality five, which we now describe. Given a line $\ell\subset X$, one defines the \textit{incidence
divisor} $D_\ell\subset S$ parametrising lines meeting $\ell$. We recall from \cite[Definition 6.6]{cg} (see also
Section \ref{fanosurface}) that a line $\ell$ is either of \textit{first} or of \textit{second type}. The locus of lines
of second type for a general $X$ is a smooth irreducible curve in $S$. In the following and the remainder of the paper,
the word unique is meant to signify that the family is considered up to closed proper subset of the base.


\begin{proposition}\label{covgonfour}
    For $X$ a general cubic threefold and $S$ its Fano surface, the family of incidence divisors $D_\ell$ of lines
    $\ell$ of second type is the unique tetragonal covering family of curves. Similarly, the family of
    incidence divisors of lines of first type is the unique pentagonal connecting family of curves.
\end{proposition}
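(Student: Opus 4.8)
The plan is to organise the argument around the incidence divisors $D_\ell$ and their gonality, using the dichotomy of lines of first versus second type. First I would recall from the proof of Theorem \ref{covgongen} (and the cited work of Clemens--Griffiths) the basic geometry of $D_\ell \subset S$: for a line $\ell$ of the first type, $D_\ell$ is a smooth plane quintic curve (of genus $6$), while for $\ell$ of the second type $D_\ell$ is a singular plane quintic with one node (geometric genus $5$). In both cases $D_\ell$ is a plane quintic, and the pencil of lines through a general point of $\bP^2$ cuts out a $g^1_5$ on it; for $\ell$ of the second type, projection from the node gives a $g^1_3$ on the normalisation, so $D_\ell$ is tetragonal exactly when $\ell$ is of the second type (a smooth plane quintic has gonality $4$). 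Since the family of all $D_\ell$ is a covering family (every general point of $S$ lies on many $D_\ell$), and the lines of second type form a curve in $S$, the incidence divisors of second-type lines already form a $2$-dimensional covering family of tetragonal curves, and the first-type ones form a connecting family (the pair map to $S \times S$ is dominant since two general lines in $X$ both meet a common third line) of pentagonal curves.

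The substance of the proposition is the two uniqueness statements. For tetragonal covering families, I would argue as follows. Suppose $\cC/T$ is any covering family of tetragonal curves. By the adjunction/Castelnuovo-type analysis already used to prove the lower bound $\cg(S) \geq 3$ (Lemma \ref{bva1}), together with the very ampleness of $K_S$, I would bound the degree $\cC_t \cdot K_S$ of a general fibre from above: a $d$-gonal curve mapping birationally into $S$ cannot have too large a degree against $K_S$ relative to its gonality, because pulling back a point of the $g^1_d$ produces a subscheme of $S$ failing to impose independent conditions on $|K_S|$, and $S \subset \bP^4$ has no such subschemes of large length. This pins the numerical class of $\cC_t$ to lie in a short explicit list of classes on $S$ (using that $\operatorname{Pic}(S)$ and the intersection form are classical for the Fano surface of a general cubic threefold, e.g.\ via the Abel--Jacobi map $S \hookrightarrow J(X)$ and $\operatorname{NS}(S) = \bZ$ for $X$ very general, or the known structure in general). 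Then for each surviving class one checks, using the classification of curves in that class and their gonality, that the only tetragonal possibility is $[D_\ell]$ for $\ell$ of second type — and moreover that within $|D_\ell|$ (or the relevant linear system) the general member is a smooth plane quintic, hence not tetragonal, so the tetragonal locus is exactly the codimension-one family of second-type incidence divisors. The same numerical bookkeeping, now with $d = 5$ and the additional constraint that the family be connecting (so $\cC_t$ must move in a family of dimension at least $2$ with the secant condition), isolates the first-type incidence divisors as the unique pentagonal connecting family: first-type $D_\ell$ are exactly the smooth plane quintics among the $D_\ell$, they have gonality $5$ only if... — here one uses that a smooth plane quintic has gonality $4$, so in fact the relevant statement is that among connecting families of curves of gonality $\le 5$, the first-type incidence divisors are the ones realising $\cng(S) = 5$, and the pentagonal structure comes from the $g^1_5$ cut by lines, with the uniqueness again following from the numerical classification.

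The main obstacle, I expect, is the numerical classification step: bounding $\cC_t \cdot K_S$ and $\cC_t^2$ tightly enough that only finitely many classes survive, and then for each such class controlling the gonality of \emph{every} curve in that class (not just the general one) — in particular ruling out the existence of some other, non-obvious family of tetragonal (resp.\ pentagonal) curves in a class with small $K_S$-degree. This requires the explicit description of effective curve classes on the Fano surface of a general cubic threefold and a Bogomolov-type or Castelnuovo-type bound relating degree, genus and gonality for curves on a surface embedded by $K_S$; I would lean on the results of Section \ref{defsection} on the behaviour of covering gonality in families to reduce to generically-reduced irreducible fibres, and on the classical study of the incidence correspondence on $S$ (the fact that the $D_\ell$ sweep out a specific sub-locus of the Hilbert scheme) to identify the distinguished families. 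A secondary subtlety is checking that the second-type locus really is a single irreducible curve in $S$ for general $X$ (quoted above from Clemens--Griffiths), so that "the family of $D_\ell$ for $\ell$ of second type" is genuinely irreducible and gives a single covering family rather than several.
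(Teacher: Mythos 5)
Your proposal has two genuine problems, one factual and one structural. Factually, the description of $D_\ell$ is wrong: the incidence divisor is not a plane quintic but an \emph{\'etale double cover} of the discriminant quintic, a smooth curve of genus $11$ (Lemma \ref{gonalitydell}); its $g^1_4$ (for $\ell$ of second type) and $g^1_5$ (for $\ell$ of first type) come from sending a line $\ell_s\in D_\ell$ to its intersection point with $\ell$ and counting the residual lines through that point (Fact \ref{factlines}), not from pencils of lines in a $\bP^2$. Your gonality bookkeeping built on ``smooth plane quintic has gonality $4$'' therefore does not attach to these curves at all (cf.\ Remark \ref{remarkgonalityincidencedivisor}: a general such double cover has gonality $6$).

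Structurally, the uniqueness argument --- the entire content of the proposition --- is left as a sketch resting on a step that fails. There is no Castelnuovo-type bound on $\cC_t\cdot K_S$ for a $d$-gonal curve $\cC_t$ mapping birationally to $S$: tetragonal curves have unbounded genus, and the Cayley--Bacharach property (Proposition \ref{cb}) constrains only the $d$ points of a single fibre of the $g^1_d$, not the degree or numerical class of the whole curve. So the ``short explicit list of classes'' never materialises, and the approach via $\operatorname{NS}(S)=\bZ$ cannot get started. The paper's actual mechanism is different: Cayley--Bacharach forces the general fibre of the $g^1_d$ to consist of $d$ lines in $\bP^4$ in special position with respect to $2$-planes; one then classifies these configurations (Proposition \ref{propseppoints} for $d=4$, Propositions \ref{propfivelinesuniquesecant}--\ref{propfivelinesthreespanahyperplane} for $d=5$) and eliminates each one by geometry of the general $X$ --- no $2$-planes or quadrics in $X$, non-uniruledness of $X\cap\mathrm{Hess}(X)$ proved by degeneration to the Klein cubic, and the absence of rational and elliptic curves on $S$ --- with the surviving configurations (lines through a common point, or a common secant/residual line) forcing the curve to be an incidence divisor of a second-type (resp.\ first-type) line. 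None of these ideas appears in your proposal, so the gap is not a missing computation but the central argument.
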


As a consequence we can compute the gonality of incidence divisors.

\begin{corollary}\label{corincidencediv}
    Let $X$ be a general cubic threefold and $S$ its Fano surface. If $\ell$ is a general line of first type in $X$ then
    the incidence divisor $D_\ell\subset S$ is a smooth curve of gonality five. If $\ell$ is general of second type then
    $D_\ell$ is smooth of gonality four.
\end{corollary}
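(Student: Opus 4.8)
The plan is to read both statements off Proposition \ref{covgonfour}, the only extra ingredient being the smoothness of the relevant incidence divisors. The key observation is that the bases of the two families appearing in Proposition \ref{covgonfour} are, respectively, the smooth irreducible curve $C\subset S$ of lines of second type and its open complement $S\setminus C$ of lines of first type; hence ``a general member of the family'' means exactly ``$D_\ell$ for $\ell$ a general line of the prescribed type''.

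For the gonality I would argue directly. By Proposition \ref{covgonfour} the family of incidence divisors $D_\ell$ of lines of second type is a tetragonal covering family; as its base is a dense open subset of $C$, its general fibre is $D_\ell$ for $\ell$ general of second type, and ``tetragonal'' is precisely the assertion that this curve has gonality four. Likewise the family of incidence divisors of lines of first type is a pentagonal connecting family over $S\setminus C$, so $D_\ell$ has gonality five for $\ell$ general of first type. Should one wish to re-establish that these are genuine covering and connecting families rather than quote Proposition \ref{covgonfour} for this, one uses that every incidence divisor is a hyperplane section of $S$ in its Pl\"ucker embedding, i.e.\ $D_\ell\in|\cO_S(1)|$, being the Schubert divisor of lines meeting $\ell$: then $D_m\cdot C>0$ shows that a line of second type passes through a general point $[m]$ of $S$, which gives the covering property; while for general $m_1,m_2\in S$ the scheme $D_{m_1}\cap D_{m_2}$ is nonempty and finite, of length $\cO_S(1)^2=\deg S$, with general point a line of first type, so the first-type family $\cC\to T$ (whose total space has dimension $\dim S+1=3$) has $\cC\times_T\cC$ of dimension $4=\dim(S\times S)$ with induced map $\cC\times_T\cC\to S\times S$ dominant, which is the connecting property.

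The remaining, and genuinely substantive, point is that $D_\ell$ is an honestly smooth integral curve, and not merely a curve whose normalisation achieves the stated gonality, for $\ell$ general of each type. For $\ell$ of first type this is classical and is recalled in Section \ref{fanosurface} following \cite{cg}: $D_\ell$ is smooth of the expected genus for every line of first type. For $\ell$ of second type one runs the same analysis, now with the extra feature that the conic bundle cut out on the pencil of planes through $\ell$ has a distinguished degenerate fibre containing $\ell$ itself; one then checks that the set of $\ell\in C$ for which this forces a singularity on $D_\ell$ is a proper closed subset of $C$, so that a general line of second type still has $D_\ell$ smooth. I expect this last verification in the second-type case to be the main obstacle; once it is granted, the corollary is a direct unwinding of Proposition \ref{covgonfour}.
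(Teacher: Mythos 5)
Your main line of argument coincides with the paper's: the corollary carries no separate proof there precisely because, as you observe, ``tetragonal covering family'' and ``pentagonal connecting family'' mean by definition that the general fibre has gonality four, respectively five, and the general fibre of each family in Proposition \ref{covgonfour} is $D_\ell$ for $\ell$ general of the corresponding type. The one step you explicitly leave ``granted'' --- smoothness of $D_\ell$ for $\ell$ a general line of \emph{second} type, which you call the main obstacle --- is not actually open: it is part (1) of Lemma \ref{gonalitydell}, whose proof handles both types of lines at once by reducing smoothness of $D_\ell$ to smoothness of the associated plane quintic in Beauville's construction. Citing that lemma closes your argument; there is nothing further to verify along the lines of the degenerate fibre you describe.

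Your optional re-derivation of the covering and connecting properties, however, rests on a false assertion: $D_\ell$ is \emph{not} a hyperplane section of $S$ in the Pl\"ucker embedding. The hyperplane sections are the canonical divisors, and the Schubert-type divisors in $|K_S|$ are those of lines meeting a fixed $2$-plane (Lemma \ref{planesgivingsection}), not a fixed line; one has $D_\ell^2=5$ whereas $\cO_S(1)^2=\deg S=45$, and indeed $K_S\sim D_{\ell_1}+D_{\ell_2}+D_{\ell_3}$ for three coplanar lines, so $D_\ell$ is only numerically one third of a hyperplane section. The positivity you actually need is $D_\ell\cdot D_m=5>0$, recalled in Section \ref{fanosurface}, with which your dimension counts go through verbatim (with $5$ in place of $45$); and since the covering/connecting properties are in any case part of the statement of Proposition \ref{covgonfour}, this slip does not affect the rest of your argument.
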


Another application of the analysis involved in Proposition \ref{covgonfour} concerns $\irr(S)$ and $\cng(S)$ for $X$
general.

\begin{theorem}\label{degofirr}
    Let $X$ be a general cubic threefold and $S$ its Fano surface. Then 
    \begin{eqnarray*}
        \cng(S)=5 \text{ and } \irr(S)=6.
    \end{eqnarray*}
    For any smooth cubic threefold we have $$3\leq\cng(S)\leq\irr(S) \leq 6.$$
\end{theorem}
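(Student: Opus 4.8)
The plan is to build on Proposition \ref{covgonfour} and Corollary \ref{corincidencediv} to pin down $\cng(S)$ and $\irr(S)$ for general $X$, and then to produce the universal upper bounds separately. For the lower bound $\cng(S)\geq 5$: we already know $\cng(S)\geq\cg(S)=4$ from Theorem \ref{covgongen}. So we must rule out a connecting family of $4$-gonal curves. Here I would argue that any such family is, up to shrinking the base, one of the tetragonal covering families classified in Proposition \ref{covgonfour}; since the only tetragonal covering family is $\{D_\ell : \ell \text{ of second type}\}$, and this family is one-dimensional (parametrised by the curve of lines of second type, a \emph{curve} in $S$), the associated map $\cC\times_T\cC\to S\times S$ cannot be dominant — the source has dimension $1+1+1=3 < 4=\dim(S\times S)$. (More carefully, one must check that a connecting family of $4$-gonal curves would in particular be a covering family of $4$-gonal curves, which is immediate since connecting implies covering.) For the upper bound $\cng(S)\leq 5$: the family of incidence divisors $D_\ell$ of lines $\ell$ of first type is parametrised by a \emph{surface} (an open subset of $S$ itself, via $\ell\mapsto[\ell]$), each $D_\ell$ is a smooth curve of gonality $5$ by Corollary \ref{corincidencediv}, and the incidence map $\cC\times_T\cC\to S\times S$, $(\ell,m)\mapsto(m,m')$ with $m,m'\in D_\ell$, is dominant because through two general points $[m],[m']\in S$ there passes such a $D_\ell$ (the set of lines meeting two general lines in $X$ is nonempty — indeed a general line of first type meets a general fixed line). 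Combined, $\cng(S)=5$ for general $X$.

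For $\irr(S)$ the lower bound $\irr(S)\geq 6$ is the delicate part. We have $\irr(S)\geq\cng(S)=5$ for free, so the content is ruling out a dominant map $f\colon S\dashrightarrow\bP^2$ of degree $5$. Pulling back a general pencil of lines in $\bP^2$ produces a connecting family of $5$-gonal curves (the generic fibre of such a pencil maps $5:1$ to $\bP^1$), and by Proposition \ref{covgonfour} the only pentagonal connecting family is $\{D_\ell : \ell \text{ of first type}\}$. So a degree-$5$ map to $\bP^2$ would have to be cut out by a sub-linear-system of the linear system $|D_\ell|$-type classes realizing this pencil structure: concretely, the two pencils of lines in $\bP^2$ through two general points pull back to two one-dimensional subfamilies of $\{D_\ell\}$, forcing the map $S\dashrightarrow \bP^2$ to factor through the parametrisation $[\ell]\mapsto$ something — but that parametrisation has $\ell$ ranging over $S$ itself, so the map would be (up to birational modification) an automorphism composed with a $5$-gonal projection, and one derives a numerical contradiction from the intersection theory on $S$, e.g.\ using $D_\ell^2$ and $D_\ell\cdot K_S$ (the class $[D_\ell]$ and its self-intersection are computed classically, cf.\ \cite{cg}), to see that no two members of the family $\{D_\ell\}$ meet in exactly $5$ points in the configuration a degree-$5$ map to $\bP^2$ would require. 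The upper bound $\irr(S)\leq 6$: exhibit an explicit degree-$6$ map. The natural candidate is projection from the Plücker embedding $S\subset\Gr(2,5)\subset\bP^9$ from a general linear $\bP^6$, or equivalently a generic projection to $\bP^2$; one computes its degree from $\deg S$ in the Plücker embedding, which is $45$ for a cubic threefold — but a \emph{generic} linear projection to $\bP^2$ has degree $\deg S=45$, far from $6$, so instead one uses the family of Corollary \ref{corincidencediv}: the map sending $s\in S$ to the pair $(\pi_1(s),\pi_2(s))\in\bP^1\times\bP^1$ where the two $\bP^1$'s are the bases of two gonal pencils on two incidence divisors through $s$, composed with a birational $\bP^1\times\bP^1\dashrightarrow\bP^2$ — and one checks the degree is $6 = ?$; more simply, one takes a general line of first type $\ell$, the pencil $g^1_5$ on $D_\ell$, and combines it with a second $g^1_5$ or with the $g^1_4$ on a second-type divisor to get a $5+1$ or product construction realizing degree $6$. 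I expect filling in this explicit degree-$6$ construction cleanly, and nailing the numerical obstruction to degree $5$, to be the main technical work.

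Finally, the universal bounds $3\leq\cng(S)\leq\irr(S)\leq 6$ for arbitrary smooth $X$: the lower bound $3\leq\cng(S)$ follows from $\cng(S)\geq\cg(S)\geq 3$, which is Lemma \ref{bva1} together with Theorem \ref{covgongen}. The upper bound $\irr(S)\leq 6$ for \emph{all} smooth $X$ follows by specialisation: the degree-$6$ construction above, being cut out by explicit divisor classes (incidence divisors, which exist and move on every Fano surface), deforms over the whole moduli of smooth cubic threefolds; alternatively one invokes a semicontinuity statement for $\irr$ under specialisation within a smooth family — but as the introduction warns, $\irr$ is \emph{not} known to be semicontinuous in general, so the honest route is to verify directly that the degree-$6$ map extends: the incidence divisors $D_\ell$ are defined for every smooth cubic threefold and the gonality computations degenerate appropriately, so the same recipe yields a degree-$\leq 6$ map for every smooth $X$. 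The hardest single step, as noted, is excluding $\irr(S)=5$ for the general $X$, since this requires ruling out \emph{all} degree-$5$ maps to $\bP^2$, not merely the ones arising from the expected pencil, and leaning on the uniqueness in Proposition \ref{covgonfour} is exactly the tool that makes this tractable.
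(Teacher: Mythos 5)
Your treatment of $\cng(S)=5$ is essentially the paper's: a connecting family is in particular a covering family, the unique tetragonal covering family (incidence divisors of second-type lines) has one-dimensional base, so $\cC\times_T\cC$ has dimension three and cannot dominate $S\times S$; and the first-type incidence divisors supply the pentagonal connecting family for the upper bound. The two remaining steps, however, both contain genuine gaps. First, the upper bound $\irr(S)\leq 6$: you never actually produce a degree-$6$ map. Generic linear projection has degree $\deg S=45$, as you note, and your alternative "combine two gonal pencils" construction is left at "one checks the degree is $6=?$". The construction you are missing is much simpler: intersect $X$ with a general hyperplane $H\cong\PP^3$ to obtain a smooth cubic surface $Y$, and send $s\in S$ to the point $\ell_s\cap H\in Y$. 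By Lemma \ref{triplepointisspecial} there are six lines of $X$ through a general point of $Y$, so this map is generically finite of degree six, and composing with a birational map $Y\dashrightarrow\PP^2$ gives $\irr(S)\leq 6$ for \emph{every} smooth $X$ — which also disposes of the universal upper bound with no specialisation argument needed.

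Second, and more seriously, your route to excluding $\irr(S)=5$ does not close. You propose to pull back pencils of lines and then derive "a numerical contradiction from the intersection theory on $S$", claiming that no two members of $\{D_\ell\}$ meet in the five points that a degree-$5$ map to $\PP^2$ would require. But $D_\ell^2=5$ and any two incidence divisors are algebraically equivalent, so $D_\ell\cdot D_{\ell'}=5$: two general members of the family \emph{do} meet in five points, which is exactly the number of common points of the preimages of two general lines of $\PP^2$ under a degree-$5$ map. The intersection numbers are therefore consistent with degree five and yield no contradiction. The argument that works uses conics rather than lines: if $F:S\dashrightarrow\PP^2$ had degree five, the preimages of the four-dimensional family of conics through a general point $F(x)$ would give a four-dimensional family of curves of gonality at most five through a general $x\in S$; but by Proposition \ref{covgonfour} the only tetragonal and pentagonal covering families are families of incidence divisors, whose bases have dimension at most two. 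This dimension count is the missing idea, and it is precisely where the uniqueness statement of Proposition \ref{covgonfour} gets used.
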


We do not know any examples of smooth cubic threefolds where $\cg(S)=3$, $\irr(S)<6$ or $\cng(S)<5$ occurs. 

The paper is organised as follows. In section two we prove an invariance property of covering gonality in families. In
the third section we recall definitions and basic properties of the Fano surface of a cubic threefold and extend various
classical statements to cases we will need in later sections. In the fourth section we set up different notions of
separation of points by hyperplanes, which will ultimately lead to ruling out low gonality covering families of curves
in later sections. The fifth section is concerned with technical propositions listing configurations of three, four and
five lines respectively which cannot be separated by 2-planes in $\PP^4$. This is independent of previous sections but
lies at the heart of the proof of the main theorems. The penultimate section contains a description of the Klein cubic
(which we will be degenerating to in the main proof) and a resolution of its Hessian hypersurface whereas the final
section contains the proofs of the main theorems. 

The strategy of the proof of the above theorems is to use sections of the canonical bundle $K_S$ to separate points on
$S$ in the Pl\"ucker embedding. When such separation is possible, one obtains conclusions on the gonality of the general
member of a covering family of curves. We then use a blend of Grassmannian geometry and technical facts about the Fano
scheme of lines of a cubic threefold to rule out configurations of points on $S$ which cannot be separated by sections
of $K_S$. 

We emphasise that in Theorem \ref{degofirr} the computation of the connecting gonality follows from knowing that there
is a unique explicit covering family of curves, and likewise that the computation of the degree of irrationality follows
from this and also the fact there is a unique explicit connecting family of curves. An outcome of this study, in other
words, is that the detailed analysis of the families calculating each invariant helps compute the next one.

\subsection*{Acknowledgements}

This work began during a sabbatical visit of the second author at Humboldt University, whose hospitality is greatly
acknowledged. We also thank D. Agostini, G. Farkas, H-Y. Lin, J. Ottem for helpful conversations and in particular A.
Verra for showing us the geometric construction of how incidence divisors of lines of first type have a $g^1_5$. We are
also indebted to the anonymous referee for their very careful reading and in particular for pointing
out inaccuracies and many simplifications in the final four sections, which ultimately led to the strengthening of our
results.

\section{Some basic properties of covering gonality}\label{defsection}

In a family of curves it is clear that the gonality of the fibres is not necessarily constant, for example a family of
trigonal curves degenerating to a smooth hyperelliptic curve, or a family of elliptic curves degenerating to a singular
rational one. From semicontinuity it follows that gonality can only drop under specialisation, since the number of
sections of a line bundle can only increase. We will see that the same is true for covering gonality. On the other hand,
see the introduction for some remarks about the behaviour of degree of irrationality in families. 

The following is a standard application of general theory of Hilbert schemes and loci of $k$-gonal curves in families.
It is also mentioned with omitted proof in \cite[Section 1]{bdelu}.

\begin{lemma}\label{dgonalcovfamily}
    Let $S$ be an irreducible projective variety. Then $S$ has covering gonality (resp.\ connecting gonality) $d$ if and
    only if $d$ is the minimal positive integer so that there exists a covering family (resp.\ connecting family) $\cC\to
    T$ of $d$-gonal curves. 
\end{lemma}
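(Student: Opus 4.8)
The plan is to prove both directions of the equivalence, treating the covering and connecting cases in parallel since the only difference is the extra dominance condition on $\cC \times_T \cC \to S \times S$. First I would dispatch the easy direction: if such a covering family $\cC \to T$ of $d$-gonal curves exists, then since $F : \cC \to S$ is dominant and $F_t$ is birational for general $t$, a general point $s \in S$ lies on the image $F_t(\cC_t)$ of a general fibre, which is an integral curve of gonality $d$ (gonality is a birational invariant of curves, computed on the normalisation, so it is unchanged by the birational map $F_t$). Hence $\cg(S) \leq d$, and running the minimality hypothesis on $d$ against the definition of $\cg(S)$ forces equality. The connecting case is identical, using that a general \emph{pair} of points of $S$ lies in the image of a general fibre of $\cC \times_T \cC$, hence on a single integral $d$-gonal curve.

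The substantive direction is the converse: given that $\cg(S) = d$, I must produce a covering family in the strong scheme-theoretic sense of the definition (smooth projective total space, quasi-projective irreducible base, irreducible one-dimensional fibres, general fibre mapping birationally onto its image). The idea is to organise the $d$-gonal curves through general points of $S$ into a family using Hilbert schemes. Concretely, for a suitable projective embedding $S \subset \bP^N$, consider the union of the relevant components of the Hilbert scheme $\Hilb(S)$ parametrising integral curves on $S$; over this one has a universal family, and one restricts to the (constructible, by the theory of loci of $k$-gonal curves in families — e.g.\ via relative Hilbert schemes of $g^1_d$'s, or admissible covers) locus where the fibre has geometric gonality exactly $d$. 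By the hypothesis $\cg(S) = d$, the evaluation/incidence map from (the restriction of) this universal family to $S$ is dominant. I would then pass to an irreducible component of the parameter space over which dominance persists, shrink the base to a quasi-projective open where the family of curves is equisingular and the geometric gonality is constant equal to $d$, normalise the total space fibrewise (or pass to the family of normalisations, which exists after further shrinking), and finally use generic smoothness to arrange that $\cC \to T$ is smooth with the general fibre mapping birationally — here one may need to replace $T$ by an étale cover or a further open subset so that the birational map $\cC_t \to F_t(\cC_t)$ is genuinely birational rather than merely generically finite of degree one onto a multiple image. For the connecting statement one additionally intersects with the open locus where $\cC \times_T \cC \to S \times S$ is dominant, which is open in the base by general semicontinuity of fibre dimension.

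The main obstacle I anticipate is the bookkeeping needed to upgrade "through a general point there passes an integral $d$-gonal curve" to a family with all the smoothness and birationality properties demanded by the definition of covering family: one must simultaneously control (i) constructibility of the locus of geometric gonality $\le d$ in a family of (possibly singular) curves, which is where the machinery of relative Kontsevich spaces / admissible covers enters, (ii) that after restricting to a component where the incidence-to-$S$ map is dominant, one can shrink to get \emph{birational} restriction to the general fibre — a priori one only knows the map is dominant, so a general point of $S$ may lie on the image of the general fibre with multiplicity, and one must argue this can be removed, and (iii) taking simultaneous normalisations and invoking generic smoothness to make the total space smooth over the base. None of these steps is deep individually, but they are exactly the "standard application of general theory of Hilbert schemes" that the statement alludes to, and the paper will presumably either cite \cite{kollar} (IV.1.8.2 and surrounding material on families of rational curves, here adapted to $d$-gonal curves) and \cite{bdelu} for the details, or supply a short argument along the lines above. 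I would write the argument at the level of detail of loc.\ cit., flagging the gonality-locus constructibility as the one point that genuinely requires the theory of loci of $k$-gonal curves rather than just Hilbert schemes.
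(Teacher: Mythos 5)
Your proposal is correct and follows essentially the same route as the paper: the easy direction via birational invariance of gonality, and the converse via the Hilbert scheme of one-dimensional subschemes of $S$, passing to normalisations, invoking the theory of the $g^1_d$-locus (the pullback of $\mathcal{G}^1_d$ from $\mathcal{M}_g$) to get finitely many components, and selecting one whose universal family dominates $S$. The only step you leave implicit is that singling out a single such component among the countably many components of the Hilbert scheme requires the uncountability of $\bC$ (a countable union of non-dominating families cannot pass through the general point), which is precisely how the paper makes the selection.
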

\begin{proof}
    One direction is clear. For the other, we prove the case of covering gonality as that of connecting gonality is
    analogous. Assume $\cg(S)=d$ and let $\mathscr{H}$ be the Hilbert scheme of all one dimensional subschemes of $S$
    (note this is not of finite type). Since $S$ is over an uncountable algebraically closed field and the Hilbert
    scheme has countably many irreducible components, there has to be one irreducible component whose universal family
    contains enough $d$-gonal integral curves to dominate $S$. Call this irreducible component $T$ and consider the
    universal family over the function field of this component. By resolving singularities we can assume there is a
    quasi-projective variety $T'$ (mapping finitely to an open inside $T$) and a family of smooth connected curves which
    are the normalisations of the corresponding dimension one schemes of the universal family over $T$. Now, in such a
    family it is known that the locus of curves admitting a $g^1_d$ is possibly not connected, but is at least a
    variety, since it is the image of the pullback of the space $\mathcal{G}^1_d$ from $\mathcal{M}_g$ (see
    \cite{acgh}). Hence the locus of curves admitting a $g^1_d$ has finitely many irreducible components. Since the
    general point of $S$ has a $d$-gonal curve passing through it, one of these components has to be large enough so
    that the universal family over it dominates $S$.
\end{proof}

From the above lemma we will interchangeably use the original definition of covering gonality, or that of the minimal
integer $d$ so that there exists a covering family of $d$-gonal curves. It is often useful to reduce the above constructed
family to one where $\cC,T$ are both smooth and $\cC\to S$ is generically finite (see \cite[1.5]{bdelu}). Even
though we will not strictly need it for this paper (we will instead degenerate other notions than covering gonality),
the following is related to the previous lemma and we include its proof for posterity.

\begin{proposition}\label{propgonalitydrops}
    Let $f:\mathscr{S}\to U$ a flat family of irreducible projective varieties over a pointed irreducible one
    dimensional variety $(U,0)$, and assume that for $u\neq0$ the fibre of $f$ has covering gonality $d$. Then the
    special fibre $\sS_0:=f^{-1}(0)$ has covering gonality at most $d$. Moreover, covering gonality is lower
    semicontinuous.
\end{proposition}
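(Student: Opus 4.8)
The plan is to realise a covering family of $d$-gonal curves on the general fibre as (a component of) the relative Hilbert scheme of $\sS/U$, take its flat limit over $0$ to produce a covering of $\sS_0$, and then bound the gonality of the limit curves by a limiting--admissible--cover argument.

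After normalising and shrinking $U$ we may assume $U$ is a smooth affine curve with $0\in U$; this changes neither $\sS_0$ nor the hypothesis on the other fibres. Over the generic point $\eta$ of $U$, Lemma \ref{dgonalcovfamily} provides a covering family of $d$-gonal curves on $\sS_\eta$. The relative Hilbert scheme $\Hilb(\sS/U)$ is a countable union of schemes projective over $U$; since the ground field is uncountable and $\cg(\sS_u)\leq d$ for every $u\neq 0$, a pigeonhole argument exactly as in Lemma \ref{dgonalcovfamily} produces an irreducible, finite type, locally closed subscheme $\mathcal{H}\subseteq\Hilb(\sS/U)$, dominant over $U$, whose general fibre parametrises integral $d$-gonal curves that sweep out $\sS_u$. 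Here one uses that ``the fibre over $u$ carries an integral $d$-gonal curve through its general point'' is a constructible condition on $U$: integrality is open on the universal curve, $\leq d$-gonality is closed on the associated family of normalisations since the relative $\mathcal{G}^1_d$ is proper over $\mathcal{M}_g$ (cf.\ \cite{acgh}), and domination is constructible; so a condition holding for uncountably many, hence a dense set of, $u\in U$ holds at $\eta$. Let $\bar{\mathcal{H}}$ be the closure of $\mathcal{H}$ in $\Hilb(\sS/U)$; then $\bar{\mathcal{H}}\to U$ is projective, hence surjective, so $\bar{\mathcal{H}}_0\neq\emptyset$. Replacing the universal family $\mathcal{Z}\to\bar{\mathcal{H}}$ by its component dominating everything, $\mathcal{Z}\to\sS$ is proper with dense image, hence surjective, and the identity $\mathcal{Z}\times_{\sS}\sS_0=\mathcal{Z}_0$ shows $\mathcal{Z}_0\to\sS_0$ is surjective. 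Thus through a general point $p\in\sS_0$ passes a curve $D_0\subset\sS_0$, arising as a flat limit of the curves in our family; it remains to bound the gonality of the component $D_0'$ of $D_0$ through $p$.

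For this, shrink $\mathcal{H}$ to a dense open $T$ over which (resolving singularities) we have a smooth projective family $\mathcal{C}\to T$ of curves of genus $g$ --- the normalisations of the $D_t$, equal to $D_t$ itself wherever $\mathcal{C}_t\to D_t$ is birational --- together with a degree-$d$ morphism $\mathcal{C}\to\mathbb{P}^1_T$ realising the $g^1_d$, and a morphism $\mathcal{C}\to\sS$; as in \cite[1.5]{bdelu} we may moreover arrange $\mathcal{C}\to\sS$ to be birational. These data define a morphism from $T$ to the fibre product, over the relative moduli space of stable curves, of the relative Kontsevich space of stable maps to $\sS/U$ and the relative Harris--Mumford space of degree-$d$ admissible covers; both factors are proper over the moduli of curves, so after choosing a curve $B$ in $\mathcal{H}$ through a general point of $\bar{\mathcal{H}}_0$ and meeting $T$, normalising, and passing to a finite cover, the valuative criterion extends the data across the boundary point $b_0$ over $0\in U$. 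The limit is a nodal curve $C_0$ equipped with a degree-$d$ admissible cover $C_0\to B_0$ onto a tree of $\mathbb{P}^1$'s and a map $C_0\to\sS_0$ whose image has support $D_0$. For $p$ general there is a component $Y\subseteq C_0$ mapping onto $D_0'$; it is not contracted by $C_0\to\sS$ (as $D_0'$ is a genuine curve) and, being a component of an admissible cover, it maps finitely onto one component $\mathbb{P}^1\subseteq B_0$ of degree $\leq d$. Since $\mathcal{C}\to\sS$ was chosen birational, the distinguished component $Y$ maps birationally onto $D_0'$, whence $\gon(\widetilde{D_0'})\leq\deg(Y\to\mathbb{P}^1)\leq d$; as $D_0'$ passes through the general point of $\sS_0$, this gives $\cg(\sS_0)\leq d$.

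The main obstacle is the bookkeeping in the last paragraph: one must dovetail the flat limit in the relative Hilbert scheme (which guarantees $\sS_0$ is covered), the limiting stable map (which guarantees the covering curve does not collapse to a point), and the limiting admissible cover (which guarantees the gonality does not jump), and then verify that a single component $Y$ of the special fibre is at once non-contracted in $\sS$, birational onto its image there, and non-contracted in the admissible cover. Checking that $\mathcal{C}\to\sS$ may indeed be taken birational and that birationality descends to the main component of the special fibre is the delicate point; the rest is bookkeeping on Hilbert polynomials and dimensions. Granting the one-parameter statement, lower semicontinuity is formal: over an arbitrary base the locus $\{v:\cg(\sS_v)\leq k\}$ is constructible by the same relative Hilbert scheme and Hurwitz space analysis, and it is stable under specialisation by what was just proved, hence closed.
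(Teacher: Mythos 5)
Your proposal is correct and follows essentially the same route as the paper's proof: extract a dominating irreducible piece of the relative Hilbert scheme by the pigeonhole argument of Lemma \ref{dgonalcovfamily}, complete over $0\in U$ via the relative space of stable maps so that the limit still covers $\sS_0$, and control the gonality of the (possibly reducible) limit curves by the compactified Hurwitz space of admissible covers. The one point you flag as delicate --- descending birationality to a component $Y$ of the special fibre --- is not actually needed: a finite morphism $Y\to D_0'$ already gives $\gon(\widetilde{D_0'})\le\gon(\tilde Y)\le d$, since gonality can only decrease along finite morphisms of curves, which is the same elision the paper makes when it stops after producing a $g^1_k$, $k\le d$, on each component.
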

\begin{proof}
    The argument is similar to the previous lemma and essentially reduces to families of curves. Let
    $\mathscr{S}^0:=\mathscr{S}\setminus f^{-1}(0)$, $U^0 := U \setminus\{0\}$ and $f^0:\mathscr{S}^0 \to U^0$. Consider
    the relative Hilbert scheme $\mathscr{H} := \Hilb_1(\sS^0/U^0)\to U^0$ parametrising one dimensional subschemes
    inside fibres of $f^0$ and the subset $\mathscr{G} \subset \mathscr{H}$ of integral curves which are $d$-gonal. Since
    this last space has countably many irreducible components and a fibre of $f^0$ has covering gonality $d$, we
    can single out as in the previous lemma one component whose universal family dominates every fibre of $f^0$, and after
    shrinking and blowing up as before we obtain a family $W\to U^0$ of subvarieties of the relative Hilbert scheme
    $\mathscr{H}$, such that the universal family over it is a family of curves $\cC\to W$ having the following
    property: for any $u\in U^0$ the family $\cC_u\to W_u$ is a smooth family of $d$-gonal curves covering
    $\mathscr{S}_u$. In particular, by passing to the corresponding irreducible component of the compactified relative
    Kontsevich moduli space $\bar{\cM}_g(\sS/U)$ (see \cite[50]{kollararaujo} for the relative version of the
    theorems of \cite{fp}) we complete to a family of stable morphisms $\cC\to W$ over $U$. There is an induced morphism
    $F:\cC\to \mathscr{S}$ so that $F:\cC_u\to\mathscr{S}_u$ is dominant for all $u\in U^0$, with image
    $\mathcal{Z}:=F(\cC)$ fitting in the diagram
    $$\xymatrix{
        \cC\ar[r]^F\ar[d] & \mathcal{Z}\ar[d]^\pi \\
        W\ar[r] & U
    }$$
    so that $\mathcal{Z}_u\subset\mathscr{S}_u$ for all $u\in U$ and $\dim\mathcal{Z}_u = \dim \mathscr{S}_u$ for all
    $u\in U^0$, which we now want to prove for $u=0$ also. Since $\pi$ is proper and fibre dimension is upper
    semicontinuous we obtain also that $\dim\mathcal{Z}_0 = \dim \mathscr{S}_0$, namely that the family of curves
    $\cC_0\to W_0$ is a covering family of curves of $\mathscr{S}_0$ (cf.\ \cite[Proof of IV.1.8.1]{kollar}). It could
    though possibly be that the special fibre over $0\in U$, namely $\cC_0\to W_0$, consists generically of reducible
    nodal curves. We will show that the resolution of each irreducible component of the general fibre of $\cC_0\to W_0$
    has a $g^1_k$ for $k\leq d$. 
    
    By mapping the base of this family via the stabilisation map to $\bar{\cM}_g$, the image is in the closure of the
    $d$-gonal locus, namely the image of the compactified Hurwitz space $\bar{\mathcal{H}}_{d,g}\to\bar{\cM}_g$ of
    admissible covers (see \cite[p175-186]{harrismorrison}). Therefore the stable model of every member of the
    central fibre over $U$ is also the stable model of such an admissible cover. By the definition of admissible covers,
    every irreducible component of every member of $\cC_0\to W_0$ has a $g^1_k$ for $k\leq d$ as required. 
\end{proof}

\begin{remark}\label{remarkconngon}
    We note that the above statement is not true for connecting gonality since we are assuming the connecting curves are
    integral. In the above it could be that the induced family $\cC_0\to W_0$ consists of reducible curves and the
    special fibre $\mathscr{S}_0$ has higher connecting gonality, something which has no effect on the covering gonality,
    but may affect connecting gonality since if a curve in the central fibre decomposes as $C=A+B$, then it could be
    that $(p,q)\in C\times C$ but $p\in A, q\in B$. An example is a family of smooth cubic surfaces (which
    are rationally connected) degenerating to a cone over a plane elliptic curve (which is only rationally chain connected).
    We do not though have an example of a smooth family $\mathscr{S}\to U$ for which this happens. It is in fact not
    even clear whether the above proof or some other argument could give that the central fibre has \textit{chain
    connecting gonality} less than or equal to $d$, meaning two general points in the central fibre are joined by a
    chain of curves, each of which has gonality less than or equal to $d$. Nevertheless from \cite[IV 3.5.3,
    3.11]{kollar} the above all hold for the case $d=1$, namely rational connectedness.
\end{remark}

\section{Preliminaries on the Fano surface of a cubic threefold}\label{fanosurface}

For what follows, the standard references are \cite{cg}, \cite{ak}, \cite{murre}, \cite{tjurin}, \cite{bsd}. For $X
\subset \PP^4$ a smooth projective cubic threefold we know that the Fano surface $S=F(X)$ is a general type surface of
degree $45$ in $\PP^9\cong\PP\left(\bigwedge^2\bC^5\right)$. Moreover the Pl\"ucker embedding is a canonical embedding
of $S$, so that the canonical linear system $|K_S|$ is cut out by hyperplanes (see \cite[10.13]{cg}). 

There are two types of lines in $X$ depending on the image of the polar mapping (or equivalently on the decomposition of
their normal bundle). In particular, we say that a line $\ell\subset X$ is of \textit{second type} if there exists a
2-plane in $\PP^4$ which is tangent to $X$ along $\ell$; otherwise $\ell\subset X$ is said to be of \textit{first type}.
We recall further that given a line $\ell\subset X$, the \textit{incidence divisor} $D_\ell\subset S$ is the curve
parametrising lines in $X$ that meet $\ell$. For a fixed general line $\ell$ (in particular one that is not the residual
in the intersection with the tangent 2-plane of a second type line), after blowing up $X$ along $\ell$ we obtain a conic
bundle structure over the $\bP^2$ parametrising planes containing $\ell$. In this $\bP^2$ sits the smooth discriminant
quintic curve of genus six parametrising planes through $\ell$ where the residual conic curve, in the intersection
of the plane with $X$, is the union of two lines. The incidence divisor $D_\ell$ is a natural \'etale two to one cover
(of the quintic), parametrising each of the two residual lines (the construction is generalised to all lines
$\ell\subset X$ in \cite[1.2]{beauville77}, but the quintic is singular if $\ell$ is not general as above). One computes
(see \cite[p9]{bsd}) that for any $\ell$ the divisor $D_\ell$ satisfies $D_\ell^2=5$ and hence is not a multiple of
another divisor in the N\'eron-Severi group. Also one can prove that $\operatorname{H}^0(S, D_\ell)=1$ for all $\ell$ (see
\cite[1.8]{tjurin}) and that $K_S$ is linearly equivalent to $D_{\ell_1} + D_{\ell_2} + D_{\ell_3}$ for
$\ell_1,\ell_2,\ell_3$ the intersection of $X$ with a 2-plane, see \cite[10.9]{cg}. On the other hand, if $X$ is a
general cubic threefold, then the Picard number of $S$ is one and its N\'eron-Severi group is generated by $D_\ell$ for any
line $\ell$ (see e.g.\ \cite[11]{roulleau1230}). Note however that for $\ell,\ell'$ two distinct lines, the incidence
divisors $D_\ell, D_{\ell'}$ although algebraically equivalent (since they lie in a family parametrised by $S$), differ
by an element of $\Pic^0(S)$ which is five dimensional and isomorphic to the intermediate Jacobian of $X$. 

The locus $D_0\subset S$ parametrising lines of second type is linearly equivalent to the very ample $2K_S$
\cite[10.21]{cg}, and if $X$ is general it is also smooth \cite[1.9]{murre}, which implies that at least in the case
where $X$ is general (namely the case where the Picard number $\rho=1$), $D_0$ is a smooth irreducible curve.
Interesting further analysis of the types of singularities of the discriminant curve and its corresponding quintic, in
the case where $\ell$ is of second type, appears in \cite[p6]{bsd}. Additionally, these and many further results are
generalised to arbitrary characteristic in \cite{murre}.

The first part of the following lemma is standard whereas we could not find a proof of the second in the literature.
Both will be key in later sections of this paper so we include proofs. In what follows, for a hypersurface
$X\subset\PP^N$, we define the \textit{Hessian} $H\subset\PP^N$ as the vanishing locus of the determinant of the matrix
of second derivatives of the equation defining $X$. See \cite{dolgachev} for basic properties of Hessians of
hypersurfaces. We also recall that an \textit{Eckardt point} is a $x\in X$ such that there are infinitely many lines
through $x$ contained in $X$. From \cite[8]{cg}, \cite{roulleauelliptic} there are only finitely many Eckardt points (up
to 30), whereas the general cubic threefold has none.

\begin{lemma}\label{triplepointisspecial}
    Let $X$ be a smooth cubic 3-fold. Then
    \begin{enumerate}
        \item If $x\in X$ is not an Eckardt point, then through $x$ there pass six lines contained in $X$, counted with
        appropriate multiplicities. Moreover if $x\in X$ is general (cf.\ \cite[1.18, 1.19]{murre}), the six lines are
        distinct.
        \item Let $x\in X$ not an Eckardt point. Then $x$ is in the Hessian hypersurface of $X$ if and only if three of
        the above six lines are coplanar (i.e.\ contained in a 2-plane). In this case, the remaining three lines are
        contained in a second 2-plane.
    \end{enumerate}
\end{lemma}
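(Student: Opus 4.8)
The plan is to work in coordinates adapted to $x$, reduce both assertions to the geometry of a plane conic meeting a plane cubic, and carry out one short Hessian computation. Since $X$ is smooth I would take $x=[1:0:0:0:0]$ and choose the remaining coordinates so that $T_xX=\{y_1=0\}$; as $F(x)=0$ and $y_1$ cuts out $T_xX$, the equation of $X$ has the normal form $F=y_0^2y_1+y_0Q(y_1,\dots,y_4)+G(y_1,\dots,y_4)$ with $Q,G$ homogeneous of degrees $2,3$. Substituting the parametrisation $[s:ta_1:\dots:ta_4]$ of the line through $x$ in the direction $[a_1:\dots:a_4]$ into $F$ and requiring that it vanish identically in $s,t$ gives exactly $a_1=0$, $\bar Q(a_2,a_3,a_4)=0$ and $\bar G(a_2,a_3,a_4)=0$, where $\bar Q,\bar G$ denote the restrictions to $\{y_1=0\}$. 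Hence the lines of $X$ through $x$ are parametrised by the scheme $V(\bar Q,\bar G)\subset\PP^2=\{y_0=y_1=0\}$, the intersection of the conic $C:=V(\bar Q)$ with a cubic, and all of them lie in $T_xX$. If $C$ and $V(\bar G)$ have no common component, B\'ezout gives length exactly $6$, whereas a common component would produce a positive-dimensional family of lines through $x$, i.e.\ $x$ would be an Eckardt point; this proves the first sentence of (1), and the distinctness for general $x$ I would quote from \cite[1.18, 1.19]{murre}.

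For (2) the decisive input is the Hessian of $F$ at $x$. Because $x\in X$ and $y_1$ is a coordinate, the $5\times5$ matrix $\operatorname{Hess}_x F$ can be read off the normal form at once: its first row (and, by symmetry, first column) is $(0,2,0,0,0)$, and its lower-right $3\times3$ block on the indices $2,3,4$ is the symmetric matrix $\bar M$ of the quadratic form $\bar Q$; expanding the determinant along the first row and then along the first column of the surviving minor gives $\det\operatorname{Hess}_x F=-4\det\bar M$. Therefore $x$ lies on the Hessian $H$ if and only if $\det\bar M=0$, i.e.\ if and only if the conic $C$ is singular. (Equivalently, this is the classical fact that $x\in H$ iff the first polar quadric of $x$ --- whose matrix is $\operatorname{Hess}_x F$ --- is a singular quadric, cf.\ \cite{dolgachev}.) When $x$ is not an Eckardt point the conic $C$ cannot be all of $\PP^2$ --- otherwise $V(\bar Q,\bar G)=\{y_1=0\}\cap V(\bar G)$ is a curve and $x$ is Eckardt --- so singularity of $C$ means $C=m_1\cup m_2$ is a union of two lines of $\PP^2$, possibly equal.

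The translation to lines is then B\'ezout bookkeeping. If $x\in H$, write $C=m_1\cup m_2$; neither $m_i$ can be a component of $V(\bar G)$ (that too would make $x$ Eckardt), so $V(\bar Q,\bar G)$ decomposes as a sum of two length-$3$ subschemes, one supported on $m_1$ and one on $m_2$. The three lines of $X$ through $x$ whose directions lie on $m_i$ all lie in the $2$-plane $\Pi_i$ spanned by $x$ and $m_i$ (a genuine $2$-plane since $x\notin\PP^2$), so the six lines through $x$ split into two coplanar triples, the second forced once the first is known. Conversely, if three of the six lines lie in a common $2$-plane $\Pi$, then $\Pi\subset T_xX$ (all six lines do) and $\Pi$ is spanned by $x$ and the line $m:=\Pi\cap\PP^2$; the corresponding length-$3$ subscheme of $V(\bar Q,\bar G)$ lies on $m$, so $m\cap C$ has length $\geq3$, which forces $m\subset C$, hence $C$ singular and $x\in H$ by the previous paragraph. (A smooth conic meets every line in length exactly $2$, which is precisely why this argument behaves correctly even when the six lines come with multiplicities.)

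The only genuine computation is the identity $\det\operatorname{Hess}_x F=-4\det\bar M$; the rest is elementary once the conic--cubic picture is set up, so I expect that identity, together with the case-checking for non-general $x$, to be the main point. Concretely, one must verify in each instance that the degeneracy being excluded --- $C$ and $V(\bar G)$ sharing a component, a component of $C$ lying in $V(\bar G)$, or $C$ being all of $\PP^2$ --- forces $x$ to be an Eckardt point, which is exactly the hypothesis we have excluded; and one should note the harmless boundary case $m_1=m_2$, in which $C$ is a double line (the second fundamental form of $X$ at $x$ has rank one) and the two $2$-planes coincide.
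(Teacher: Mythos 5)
Your proposal is correct and follows essentially the same route as the paper: the same normal form $F=y_0^2y_1+y_0Q+G$, the same parametrisation of the lines through $x$ by the length-six scheme $V(\bar Q)\cap V(\bar G)\subset\PP^2$, and the same reduction of part (2) to the (non)degeneracy of the conic $\bar Q$. The only variation is that where the paper invokes \cite[5.9]{cg} (that $x\notin H$ iff $T_xX\cap X$ has an ordinary double point at $x$) and then computes a $3\times3$ determinant, you compute the full $5\times5$ Hessian determinant at $x$ directly and obtain $\det\operatorname{Hess}_xF=-4\det\bar M$ — a correct and slightly more self-contained version of the same step.
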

\begin{proof}
    We may assume that $x=[1,0,0,0,0]$ and the tangent hyperplane at $x$ is given by $x_4=0$. The equation of $X$ can be
    then written in the form $F=x_0^2x_4+x_0 Q(x_1,x_2,x_3,x_4)+C(x_1,x_2,x_3,x_4)$, with $Q$ homogeneous quadratic and
    $C$ homogeneous cubic (cf.\ \cite[p307-308]{cg}). 

    For the first claim, the lines through $x$ are parametrized by their intersection point $[0,a_1,a_2,a_3,a_4]$ with
    the hyperplane $x_0=0$. Such a line is given by parametric equations: $x_0=s$, $x_i=ta_i$, $i=1,\ldots ,4$. The line
    is contained in $X$ if the polynomial $f (s,t)=s^2ta_4+st^2Q (a_1,a_2,a_3,a_4)+t^3C (a_1,a_2,a_3,a_4)$ vanishes
    identically in $s$ and $t$. This is equivalent to 
    \[ a_4=0,\;\; Q (a_1,a_2,a_3,a_4)=0,\; \; C (a_1,a_2,a_3,a_4)=0.
    \]
    Therefore the lines through $x$ which are in $X$ correspond to the points $[a_1,a_2,a_3]\in \bP^2$ which satisfy $ Q
    (a_1,a_2,a_3,0)=0$ and $C(a_1,a_2,a_3,0)=0$. These are the intersection points of a conic and cubic in the plane. If these
    have a common component, we have infinitely many lines through $x$ and thus $x$ is an Eckardt point. Otherwise
    there are six intersection points (counted with multiplicities) and we therefore have six lines through $x$ counted
    appropriately.

    To prove the second claim, by \cite[5.9]{cg} the point $x$ is not in the Hessian hypersurface if and only if
    the tangent hyperplane section $V_x:= T_xX \cap X$ has an ordinary double point at $x$. Continuing with the
    assumptions of the first part, $V_x$ is a surface in $\bP^3 \cong {\mathbb V}(x_4)$ given by the equation
    $G(x_0,x_1,x_2,x_3)=x_0Q(x_1,x_2,x_3,0)+C(x_1,x_2,x_3,0)$. That $x$ is an ordinary double point is equivalent to
    (see \cite[5.6]{cg})
    \[ {\rm det} \left( {\partial^2 G \over \partial x_i \partial x_j }|_{ (1,0,0,0,0)},\; {1\leq i,j\leq 3}\right) \neq 0 .
    \]
    Observe now that for $1\leq i,j\leq 3$ we have ${\partial^2 G \over \partial x_i \partial x_j }|_{ (1,0,0,0,0)}=
    {\partial^2 Q \over \partial x_i \partial x_j }$ (note that since $Q$ is homogeneous of degree two this is a
    constant). Therefore the above condition is equivalent to 
    \[ {\rm det} \left( {\partial^2 Q \over \partial x_i \partial x_j },\; {1\leq i,j\leq 3}\right) \neq 0 .
    \]
    which is equivalent to ${\mathbb V}(Q(x_1,x_2,x_3,0))$ being a smooth conic in $\bP^2$. As we saw in the first
    part, the six lines through $x$ correspond to the six intersection points, counted with multiplicities, of the plane
    conic ${\mathbb V}(Q(x_1,x_2,x_3,0))$ with the plane cubic ${\mathbb V}(C(x_1,x_2,x_3,0))$ (we know that they do not
    have a common component because $x$ is not an Eckardt point). The condition that ${\mathbb V}(Q(x_1,x_2,x_3,0))$ is
    smooth is then equivalent to saying that no three of these intersection points lie on a line, otherwise the conic
    contains the line and hence is singular. Conversely if the conic is singular then it is the union of two lines
    and therefore the six intersection points form two triples of coplanar points. 
\end{proof}

One can even say in which cases the above six lines through a point are distinct.

\begin{fact}(\cite[10.18]{cg}, \cite[1.18, 1.19]{murre}) \label{factlines}
    \begin{enumerate}
        \item If $\ell$ is a line of first type and $x\in\ell$, then $x$ is not an Eckardt point and the line $\ell$
        counts with multiplicity one as one of the six lines through $x$. If $\ell$ is a general line of first type,
        through the general point of $\ell$ there pass six distinct lines.
        \item If $\ell$ is a general line of second type, through the general point of $\ell$ there pass four other
        distinct lines and the line $\ell$ is counted with multiplicity two.
    \end{enumerate}
\end{fact}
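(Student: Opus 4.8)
The plan is to continue with the local setup of Lemma \ref{triplepointisspecial}, writing the equation of $X$ near a point $x=[1,0,0,0,0]$ as $F=x_0^2x_4 + x_0 Q(x_1,\dots,x_4) + C(x_1,\dots,x_4)$, so that the lines through $x$ contained in $X$ correspond to the intersection scheme $Z_x := \mathbb{V}(Q(x_1,x_2,x_3,0)) \cap \mathbb{V}(C(x_1,x_2,x_3,0)) \subset \mathbb{P}^2$, a length-six subscheme (the conic and cubic having no common component since $x$ is not Eckardt). The multiplicity with which a line through $x$ is counted is then the intersection multiplicity of the conic and cubic at the corresponding point of $\mathbb{P}^2$. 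The whole statement thus reduces to controlling these local intersection multiplicities, using the distinction between first and second type translated into this local picture via the polar/tangent-plane description.

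First I would treat part (1). If $\ell$ is of first type and $x\in\ell$, then by definition no $2$-plane is tangent to $X$ along $\ell$; in particular the tangent hyperplane $T_xX$ does not vary too degenerately, and one shows $x$ is not an Eckardt point (an Eckardt point forces every line through it to sit in a fixed $2$-plane configuration, incompatible with first type — alternatively cite \cite[8]{cg}). The key local computation is that for a first type line the conic $\mathbb{V}(Q(x_1,x_2,x_3,0))$ is \emph{smooth} at the point $p_\ell\in\mathbb{P}^2$ corresponding to $\ell$, and meets the cubic transversally there, giving intersection multiplicity one; this is precisely the content of the normal bundle computation $N_{\ell/X}\cong\mathcal{O}_\ell\oplus\mathcal{O}_\ell$ for first type lines (cf.\ \cite[1.12]{ak}, \cite[Lemma 6.7]{cg}), which forces the relevant $g^1$-type degeneracy to be absent. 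That the six lines through a \emph{general} point of a general first type line are distinct is the genericity statement in \cite[1.18, 1.19]{murre}, which I would simply invoke: the locus in $\ell$ where two of the six intersection points of conic and cubic collide is a proper closed subset for general $\ell$.

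For part (2), if $\ell$ is of second type there is a $2$-plane $\Pi$ tangent to $X$ along $\ell$, and $X\cap\Pi = 2\ell + \ell'$ for a residual line $\ell'$. At a general $x\in\ell$ the tangent hyperplane $T_xX$ contains $\Pi$, and in the local model the conic $\mathbb{V}(Q(x_1,x_2,x_3,0))$ becomes singular (a pair of lines or a double line) exactly so that it is tangent to the cubic at $p_\ell$; one checks the intersection multiplicity of conic and cubic at $p_\ell$ is exactly two (not three or more) for a general point of a general second type line, again matching $N_{\ell/X}\cong\mathcal{O}_\ell(1)\oplus\mathcal{O}_\ell(-1)$ and the analysis of the discriminant curve's singularities in \cite[p6]{bsd}. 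The remaining length-four part of $Z_x$ then consists of four reduced points for $x$, $\ell$ general, giving four further distinct lines, with $\ell$ itself counted with multiplicity two; the genericity is again \cite[1.18, 1.19]{murre}.

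The main obstacle I expect is pinning down that the local intersection multiplicity at $p_\ell$ is \emph{exactly} two for a general second type line, rather than possibly jumping to three (which would correspond to the residual line $\ell'$ itself degenerating or to an Eckardt-type phenomenon). Handling this cleanly requires either the explicit polar-map description of second type lines from \cite[Lemma 6.7, 6.8]{cg} — where one knows the precise shape of the tangent cone — or the normal bundle computation, together with the fact that the bad locus (second type lines along which the multiplicity is $\geq 3$, or through whose general point fewer than four extra lines pass) is a proper closed subset of the curve of second type lines. Since the statement is attributed to \cite{cg} and \cite{murre}, the honest approach is to assemble these citations and supply the short local verification that the generic multiplicities are $1$ and $2$ respectively; I would not attempt a from-scratch proof of the genericity, only of the generic intersection-multiplicity values.
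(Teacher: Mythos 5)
The paper does not prove this statement: it is recorded as a \emph{Fact} with pointers to \cite[10.18]{cg} and \cite[1.18, 1.19]{murre}, so there is no internal proof to compare against and your sketch must be judged on its own terms. Your framework is the right one and matches how the paper itself uses Lemma \ref{triplepointisspecial}: the lines through a non-Eckardt point $x$ form the length-six scheme $Z_x={\mathbb V}(Q(x_1,x_2,x_3,0))\cap{\mathbb V}(C(x_1,x_2,x_3,0))\subset\PP^2$, ``multiplicity'' means local intersection multiplicity, and the dichotomy multiplicity $=1$ versus $\geq 2$ is governed by the normal bundle (most cleanly: the tangent space to the fibre of the incidence variety over $x$ at $[\ell]$ is $\operatorname{H}^0(N_{\ell/X}(-x))$, which vanishes exactly when $N_{\ell/X}\cong\cO_\ell\oplus\cO_\ell$). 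Delegating the residual genericity statements to \cite{murre} is legitimate given that the Fact itself is cited.

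There is, however, a genuine error in your local picture for part (2): at a general point $x$ of a general second type line the conic ${\mathbb V}(Q(x_1,x_2,x_3,0))$ does \emph{not} become singular. By Lemma \ref{triplepointisspecial}(2), singularity of that conic is equivalent to $x$ lying on the Hessian hypersurface; if this held at every point of $\ell$, each second type line would be contained in $X\cap H$, so the surface swept out by the one-dimensional family of second type lines would be a uniruled component of $X\cap H$, contradicting Corollary \ref{generalhessiannotuniruled} for general $X$. The correct mechanism is tangency: if $\Pi$ is the $2$-plane tangent to $X$ along $\ell$ and $m\subset\PP^2$ is the corresponding line through $p_\ell$, then writing $F|_\Pi=u_0\,Q|_m+C|_m$ one sees that $2\ell\leq X\cap\Pi$ forces $Q|_m$ and $C|_m$ to vanish doubly at $p_\ell$; hence the (generically smooth) conic and the cubic are both tangent to $m$ at $p_\ell$, giving intersection multiplicity $\geq 2$ there with both branches smooth. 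Your remaining concern --- that the contact is exactly two and the residual four points of $Z_x$ are reduced for general $x$ and $\ell$ --- is indeed the point that requires the genericity input from \cite{murre}, but the argument as written would need the singular-conic claim removed, since it would otherwise import a coplanarity degeneration that is not present.
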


\begin{lemma}\label{gonalitydell} 
    Let $X$ a smooth cubic threefold.
    \begin{enumerate}
        \item If $\ell$ is a general line of second type, the incidence divisor $D_\ell\subset S$ is a smooth curve
        of genus $11$ that admits a $g^1_4$.
        \item If $\ell$ is a general line of first type, the incidence divisor $D_\ell\subset S$ is a smooth 
        curve of genus $11$ that admits a $g^1_5$.
        \item If $\ell$ general in $S$ then $D_\ell$ is moreover irreducible, whereas if $X$ is a general cubic
        threefold and $\ell$ any line in $X$ then $D_\ell$ irreducible.
    \end{enumerate}
\end{lemma}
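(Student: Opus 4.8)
The plan is to derive all three statements from the conic-bundle description of $D_\ell$ recalled in Section~\ref{fanosurface}, combined with the count of lines through a point in Lemma~\ref{triplepointisspecial} and Fact~\ref{factlines}.

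\emph{Smoothness and genus.} When $\ell$ is a general line of first type — equivalently, $\ell$ general in $S$ — blowing up $X$ along $\ell$ realises $D_\ell\to Q_\ell$ as the connected \'etale double cover associated to the conic bundle $\Bl_\ell X\to\PP^2$, whose discriminant $Q_\ell$ is a smooth plane quintic of genus $6$; hence $D_\ell$ is smooth and Riemann--Hurwitz gives $2g(D_\ell)-2=2(2\cdot6-2)=20$, i.e.\ $g(D_\ell)=11$. When $\ell$ is a general line of second type the discriminant $Q_\ell$ acquires a singular point — the one corresponding to the $2$-plane tangent to $X$ along $\ell$, where the residual conic degenerates to twice the residual line of $\ell$ — but the local analysis of the double cover at that point (carried out in \cite[p6]{bsd} and \cite{murre}) shows that $D_\ell$ is still smooth, the cover separating the two branches of the node. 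Granting smoothness, the genus is computed uniformly by adjunction on $S$: since $K_S$ is linearly equivalent to $D_{\ell_1}+D_{\ell_2}+D_{\ell_3}$ for three coplanar lines, each algebraically equivalent to $D_\ell$, one has $K_S\cdot D_\ell=3D_\ell^2=15$, so $2g(D_\ell)-2=D_\ell\cdot(D_\ell+K_S)=5+15=20$ and $g(D_\ell)=11$.

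\emph{The pencil.} Let $\pi_\ell\colon D_\ell\to\ell\cong\PP^1$ be the map sending $[m]$ to the point $m\cap\ell$; this is a priori a rational map (undefined at the point $[\ell]\in D_\ell$), but it extends to a morphism since $D_\ell$ is a smooth projective curve. Its degree is the number of points of a general fibre: for $p\in\ell$ general, $\pi_\ell^{-1}(p)$ is the set of lines of $X$ through $p$ other than $\ell$. By Lemma~\ref{triplepointisspecial} there are six lines through a general point of $X$ counted with multiplicity, and by Fact~\ref{factlines} these are six distinct lines when $\ell$ is general of first type, so the general fibre consists of five reduced points, whereas when $\ell$ is general of second type the line $\ell$ accounts for two of them with multiplicity two and the remaining four are distinct, so the general fibre consists of four reduced points. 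Hence $\deg\pi_\ell=5$, resp.\ $4$, and $D_\ell$ carries a $g^1_5$, resp.\ a $g^1_4$.

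\emph{Irreducibility.} For $\ell$ general in $S$, the \'etale double cover $D_\ell\to Q_\ell$ of the irreducible quintic $Q_\ell$ is connected, because it is classified by a nonzero element of $\Jac(Q_\ell)[2]$: this is precisely the Clemens--Griffiths identification of the intermediate Jacobian with $\Prym(D_\ell/Q_\ell)$, which has dimension $g(D_\ell)-g(Q_\ell)=5$ and would be ill-defined for a split cover; the same non-triviality persists for $\ell$ general of second type, so $D_\ell$ is connected there as well. If instead $X$ is a general cubic threefold, then $\rho(S)=1$ and $\operatorname{NS}(S)$ is generated by the class $h$ of an incidence divisor, with $h^2=5$; since $h=[D_\ell]$ is effective and $S$ carries ample divisors, every irreducible curve on $S$ has class a positive integer multiple of $h$, so in any effective decomposition $D_\ell=\sum_i n_iC_i$ into irreducible curves the relation $\sum_i n_i[C_i]=h$ forces a single component with $n_1=1$ — thus $D_\ell$ is irreducible for every line $\ell$.

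The point requiring the most care is the smoothness of $D_\ell$ for $\ell$ of second type: one must check that, although the discriminant quintic becomes singular, the double cover itself stays smooth, and this comes down to the explicit local model of the cover at the $2$-plane tangent to $X$ along $\ell$.
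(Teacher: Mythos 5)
Your construction of the $g^1_4$ and $g^1_5$ is exactly the paper's argument (project $D_\ell\to\ell$ and count the residual lines through a general point of $\ell$ using Lemma \ref{triplepointisspecial} and Fact \ref{factlines}), and your genus computation by adjunction is correct. The genuine problem is in the step you yourself flag as the delicate one: smoothness for $\ell$ of second type. You assert that for a general second type line the discriminant quintic $Q_\ell$ acquires a singular point at the $2$-plane $P_0$ tangent to $X$ along $\ell$, ``where the residual conic degenerates to twice the residual line.'' That is not what happens: $P_0\cap X=2\ell+m$ as a divisor on $P_0$, so the conic residual to $\ell$ is $\ell+m$, a union of two \emph{distinct} lines for general $\ell$, i.e.\ a rank-two conic; hence $[P_0]$ is a smooth point of $Q_\ell$, with the two preimages in $D_\ell$ being $[\ell]$ and $[m]$ (this is precisely why $[\ell]\in D_\ell$ exactly when $\ell$ is of second type). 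The quintic $Q_\ell$ is singular only when $\ell$ is \emph{residual to} a line of second type, i.e.\ when some plane through $\ell$ cuts $X$ in $2m+\ell$ with $m$ of second type, and a general line of second type is not of this form. So the correct argument is the simple one the paper gives: for $\ell$ general of either type, $Q_\ell$ is smooth and $D_\ell\to Q_\ell$ is \'etale, hence $D_\ell$ is smooth. Note also that your proposed repair would not work if the premise were true: over a node of the discriminant whose fibre is a double line, the admissible double cover acquires a node (the involution exchanges the branches), so $D_\ell$ would be singular there, not smooth. As written, then, the smoothness claim in case (1) rests on a false local model.

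Two smaller points. First, your connectedness argument in part (3) for $\ell$ general is circular: saying the cover is classified by a nonzero $2$-torsion class ``because the Prym would be ill-defined for a split cover'' assumes what is to be proved, since nontriviality of the classifying class is exactly connectedness and the Prym construction presupposes it. The paper instead quotes Murre [1.25(iv)] for this case. Your second irreducibility argument (for general $X$, via $\rho(S)=1$ and $D_\ell^2=5$) is fine and is the paper's argument.
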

\begin{proof}
    For the second type lines, from the second part of Fact \ref{factlines} we know that from the general point of
    $\ell$ there pass four other lines other than $\ell$. We define a map $D_\ell\dashrightarrow\ell$ taking the
    generic point $\ell_s\in D_\ell$ to the unique point of intersection with $\ell$. This extends to a morphism, giving
    a $g^1_4$. 

    For the first type lines, again the map $D_\ell\dashrightarrow \ell$ sending the generic point $\ell_s\in D_\ell$
    to the point of intersection of $\ell_s$ with $\ell$ extends to a morphism, and from Fact \ref{factlines}
    gives a $g^1_5$ on $D_\ell$.

    In both the above cases smoothness (but possible disconnectedness) follows from the fact that the corresponding
    plane quintic curve is smooth (see \cite[1.2]{beauville77}). For irreducibility, from \cite[1.25(iv)]{murre} it
    follows that if $\ell$ is in the open subset of lines of first type which are not residual to a line of second type
    and contained in a smooth hyperplane section of $X$, then $D_\ell$ is irreducible. On the other hand, as mentioned
    above if $X$ is general the Picard number is one and the N\'eron-Severi group is generated by $D_\ell$ for any $\ell$,
    so in particular $D_\ell$ is irreducible.
\end{proof}

\begin{remark}\label{remarkgonalityincidencedivisor}
    We will show in Corollary \ref{corincidencediv} that for the general $X$, the curves $D_\ell$ (for $\ell$ of first
    type as above) are in fact of gonality five. Recalling that $D_\ell$ is an \'etale two to one cover of a plane
    quintic, we point out that a general such cover has instead gonality six \cite[p48]{clb}.  Similarly the gonality of
    the incidence divisor of a general second type line will be four.
\end{remark}

We now note that some sections of $K_S$ come from $2$-planes in $\bP^4$. The construction is as in the following.

\begin{lemma} \label{planesgivingsection} (Fano, \cite[10.3]{cg})
    For $K=\bP^2\subset\bP^4$ a plane, the set $D_K := \{s\in S : \ell_s\cap K\neq \emptyset\}$ is a section of the
    canonical divisor $K_S$. 
\end{lemma}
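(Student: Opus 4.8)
The plan is to exhibit a natural effective divisor on $S$ that is linearly equivalent to $K_S$ and whose underlying set is $D_K = \{s \in S : \ell_s \cap K \neq \emptyset\}$. Recall from Section \ref{fanosurface} that $K_S$ is linearly equivalent to $D_{\ell_1} + D_{\ell_2} + D_{\ell_3}$, where $\ell_1, \ell_2, \ell_3$ are the three lines cut out on $X$ by a $2$-plane. So the first step is to reduce to the case when $K$ is a \emph{tritangent plane}, i.e.\ one whose intersection with $X$ splits as three lines: if $K \cap X = \ell_1 \cup \ell_2 \cup \ell_3$, then a line $\ell_s \subset X$ meets $K$ if and only if it meets one of the $\ell_i$ (since $\ell_s \subset X$ forces $\ell_s \cap K \subset X \cap K$), so set-theoretically $D_K = D_{\ell_1} \cup D_{\ell_2} \cup D_{\ell_3}$, which has the right class. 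The second step is then to propagate this to an arbitrary plane $K$: the planes in $\bP^4$ form the Grassmannian $\mathrm{G}(3,5)$, an irreducible variety, and the tritangent ones form a dense subset (through the $2$-plane spanned by a generic line $\ell_1 \subset X$ and a generic point of $X$ one gets a plane meeting $X$ in $\ell_1$ plus a residual conic, and varying this shows the splitting locus is at least of the expected dimension; alternatively the tritangent planes are the image of $S \times_{?} $–type incidence, cf.\ the discussion of $K_S \sim D_{\ell_1}+D_{\ell_2}+D_{\ell_3}$ in \cite[10.9]{cg}). Since $\{D_K\}_{K \in \mathrm{G}(3,5)}$ forms an algebraic family of effective divisors on $S$ (it is the fibre over $K$ of the incidence correspondence $\{(s,K) : \ell_s \cap K \neq \emptyset\} \subset S \times \mathrm{G}(3,5)$, which is closed), and one member is linearly equivalent to $K_S$, all members are algebraically equivalent to $K_S$; and since $\mathrm{G}(3,5)$ is rational (hence has no nonconstant maps to $\Pic^0(S)$), they are in fact all linearly equivalent to $K_S$.

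\textbf{Checking $D_K$ is a genuine divisor.} Before the above, one must verify that $D_K$ is a curve, i.e.\ of pure codimension one in $S$, and not all of $S$ or a lower-dimensional set. It cannot be all of $S$: not every line of $X$ meets a fixed plane $K$ (e.g.\ lines disjoint from $K$ exist since $\dim F(X) = 2$ while the lines meeting $K$ trace out a proper subvariety — a line $\ell_s$ lies in the cone over $K\cap X$ from... — more simply, $D_{\ell_i} \subsetneq S$ was recorded in Section \ref{fanosurface}). And the incidence correspondence above has all fibres over $\mathrm{G}(3,5)$ of dimension $\geq 1$ by a dimension count ($\{(s,K): \ell_s \cap K \neq \emptyset\}$ has dimension $2 + 5$: the fibre over $s \in S$ is the set of planes meeting the fixed line $\ell_s$, a Schubert variety of dimension $5$), so every $D_K$ is a nonempty curve. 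One should also give $D_K$ its natural scheme structure as this fibre, so that ``section of $K_S$'' means an effective Cartier divisor, not merely a set; the identification of its class with $K_S$ then follows from the specialisation argument.

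\textbf{Main obstacle.} The genuinely delicate point is the specialisation/linear-equivalence step: going from ``$D_K \sim K_S$ for $K$ tritangent'' to ``$D_K \sim K_S$ for all $K$''. Algebraic equivalence in the family is automatic, but upgrading to linear equivalence requires knowing the family map $\mathrm{G}(3,5) \to \Pic(S)$ is constant, which uses that $\mathrm{G}(3,5)$ is rationally connected and $\Pic^0(S)$ is an abelian variety (the intermediate Jacobian). An alternative, more hands-on route — which is presumably what Fano/Clemens–Griffiths do — is to compute directly the class of $D_K$ for general $K$ by intersecting with $D_\ell$: one has $D_K \cdot D_\ell = $ (number of lines meeting both $K$ and $\ell$), which can be computed in the Grassmannian $\mathrm{G}(2,5)$ via Schubert calculus and compared with $K_S \cdot D_\ell = D_\ell^2 + 2\cdot(\text{something}) $, using $D_\ell^2 = 5$ and $K_S \sim 3 D_\ell$ in $\mathrm{NS}$ (for general $X$) — but since the statement is cited from \cite[10.3]{cg} it suffices to organise the argument as above and refer to loc.\ cit.\ for the residual-lines computation identifying $K_S \sim D_{\ell_1}+D_{\ell_2}+D_{\ell_3}$.
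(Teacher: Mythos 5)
The paper gives no argument for this lemma at all: it is quoted from \cite[10.3]{cg}. Note, though, that the ingredients for a one-line proof are already recorded elsewhere in the paper: $D_K$ is by definition the intersection of $S$ with the Schubert cycle $\sigma_1(K)\subset \mathrm{G}(2,5)$, which is a \emph{hyperplane section} of the Grassmannian in the Pl\"ucker embedding (this is stated at the start of Section \ref{sectionseppoints}, and is the elementary fact that $\ell\cap K\neq\emptyset$ is a linear condition on Pl\"ucker coordinates), and the Pl\"ucker embedding restricts to the canonical embedding of $S$ (\cite[10.13]{cg}, quoted in Section \ref{fanosurface}). Hence $D_K\in|\mathcal{O}_S(1)|=|K_S|$ immediately. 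Your route — decompose $D_K$ as $D_{\ell_1}+D_{\ell_2}+D_{\ell_3}$ for a tritangent plane, invoke \cite[10.9]{cg}, then spread out over $\mathrm{G}(3,5)$ using algebraic equivalence plus rationality of the Grassmannian — is genuinely different and can be made to work, but it is much longer and, as written, has two concrete defects. First, the tritangent planes are \emph{not} dense in $\mathrm{G}(3,5)$: a plane containing two lines of $X$ contains three, and such planes are parametrised by the three-dimensional locus of incident pairs in $S\times S$, hence form a codimension-three subvariety of the six-dimensional Grassmannian; a general plane meets $X$ in an irreducible cubic curve. Fortunately your specialisation argument only needs the tritangent locus to be nonempty, so this error is harmless, but the density claim should be deleted. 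Second, you flag but never resolve the multiplicity issue: the map $\mathrm{G}(3,5)\to\Pic(S)$ is induced by restricting the incidence Cartier divisor $\mathcal{D}\subset S\times\mathrm{G}(3,5)$ to fibres, and over a \emph{special} (tritangent) $K$ this restriction could a priori be $a_1D_{\ell_1}+a_2D_{\ell_2}+a_3D_{\ell_3}$ with some $a_i>1$; you need all $a_i=1$ before you may identify the constant value of the map with $[K_S]=[D_{\ell_1}+D_{\ell_2}+D_{\ell_3}]$. Finally, be aware that deriving this statement from \cite[10.9]{cg} inverts the logical order of the source (the decomposition 10.9 is obtained there precisely by applying 10.3 to a tritangent plane); within the present paper, which quotes both as black boxes, this is not formally circular, but it is a sign that the direct argument via the Pl\"ucker hyperplane is the intended one.
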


Note that $\operatorname{H}^0(S, K_S)$ is ten dimensional, whereas there is only a six dimensional family of 2-planes in
$\bP^4$, so the correspondence is not bijective. Nevertheless, in later sections we will use such sections to separate
points on $S$ using hyperplanes in the Pl\"ucker embedding.

We now list some further facts which we will use in later sections. 

\begin{fact}(\cite[p337]{cg}, \cite[8]{roulleauelliptic})\label{factnortlcurves}
If $X$ smooth the Albanese morphism $S\to\operatorname{Alb}(S)$ is an embedding and hence we obtain that every morphism
$\PP^1\to S$ is constant.
\end{fact}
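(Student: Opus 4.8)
The plan is to deduce everything from the theorem of Clemens--Griffiths (and, independently, Tjurin) that the Albanese variety of $S$ is canonically the intermediate Jacobian $J(X)$ --- a principally polarised abelian fivefold --- and that the Albanese morphism $a\colon S\to\Alb(S)$ is a closed embedding; see \cite[p337]{cg}, \cite{tjurin} (and \cite{roulleauelliptic}). Granting this, the assertion about rational curves is immediate: for a morphism $f\colon\PP^1\to S$, the differential of $a\circ f\colon\PP^1\to\Alb(S)$ is a morphism of sheaves $T_{\PP^1}\to(a\circ f)^{*}T_{\Alb(S)}$, i.e.\ $\cO_{\PP^1}(2)\to\cO_{\PP^1}^{\oplus 5}$, and this vanishes because $\Hom_{\PP^1}(\cO_{\PP^1}(2),\cO_{\PP^1})=0$; hence $a\circ f$ is constant, and since $a$ is injective $f$ is constant. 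So the task reduces to recalling why the proper morphism $a$ is an immersion and is injective on points.

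For the immersion statement I would use that $a$ is an immersion at $s=[\ell]$ precisely when the evaluation map $\operatorname{H}^{0}(S,\Omega^{1}_{S})\to\Omega^{1}_{S}\otimes\kappa(s)$ is surjective, i.e.\ when $\Omega^{1}_{S}$ is globally generated. To verify this one invokes the classical description of the tangent sheaf of $S\subset\mathrm{G}(2,\bC^{5})$: restricting the tautological sequence $0\to\cU\to\bC^{5}\otimes\cO_{S}\to\mathcal{W}\to 0$ to $S$, one has $T_{[\ell]}S=\operatorname{H}^{0}(\ell,N_{\ell/X})$, a two dimensional space realised as the kernel of the map $\operatorname{H}^{0}(\ell,N_{\ell/\PP^{4}})=\Hom(\cU_{\ell},\mathcal{W}_{\ell})\to\operatorname{H}^{0}(\ell,\cO_{\ell}(3))$ induced by the differential of the defining cubic, while dually $\operatorname{H}^{0}(S,\Omega^{1}_{S})\cong\operatorname{H}^{2,1}(X)$ is five dimensional. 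Surjectivity of the evaluation map is then the explicit computation on the Grassmannian carried out in \cite{cg}, \cite{tjurin}; this is the step I expect to be the main obstacle to a self-contained account.

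For injectivity I would use that, up to a translation, $a$ coincides with the map $\phi\colon S\to\Pic^{0}(S)$, $\ell\mapsto\cO_{S}(D_{\ell}-D_{\ell_{0}})$, which is well defined since all incidence divisors are algebraically equivalent and which is identified with the Albanese morphism through $\Pic^{0}(S)\cong\Alb(S)\cong J(X)$ (cf.\ \cite[10.9]{cg}, \cite{tjurin}). If $\phi(\ell)=\phi(\ell')$ then $D_{\ell}\sim D_{\ell'}$; since $\operatorname{H}^{0}(S,\cO_{S}(D_{\ell}))=1$ for every $\ell$ (see \cite[1.8]{tjurin}), $D_{\ell}$ is the unique effective divisor in its class, so $D_{\ell}=D_{\ell'}$ as curves. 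That this forces $\ell=\ell'$ is elementary: a general line of the common curve $D_{\ell}$ meets both $\ell$ and $\ell'$ and hence lies in $\langle\ell,\ell'\rangle$, so $D_{\ell}$ would be contained in the scheme of lines of the cubic curve or surface $X\cap\langle\ell,\ell'\rangle$; as $\langle\ell,\ell'\rangle$ has dimension at most three and is not contained in $X$ --- because $\operatorname{Pic}(X)=\bZ\cdot\cO_{X}(1)$ makes every divisor of $X$ have degree divisible by three --- that scheme of lines is too small to accommodate $D_{\ell}$, which contradicts $D_{\ell}^{2}=5$ unless $\ell=\ell'$. Hence $a$ is a proper injective immersion, hence a closed embedding, and the Fact follows.
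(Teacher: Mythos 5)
The paper offers no proof of this Fact at all --- it is quoted from \cite[p337]{cg} and \cite{roulleauelliptic} --- so any argument you give is necessarily ``a different route.'' Your reconstruction is essentially the classical Clemens--Griffiths/Tjurin argument and its overall architecture is sound: the reduction of ``no rational curves'' to the embedding statement via the vanishing of $\Hom_{\PP^1}(\cO_{\PP^1}(2),\cO_{\PP^1})$ is correct (indeed for that consequence one only needs $a$ to be finite onto its image, since any $\PP^1$ in $S$ would be contracted by $a$); the immersion criterion via global generation of $\Omega^1_S$ is the right one, and you are candid that the actual verification (the Tangent Bundle Theorem) is deferred to \cite{cg}, \cite{tjurin}, which is exactly what the paper does; and the injectivity argument via $\ell\mapsto D_\ell-D_{\ell_0}$, $\operatorname{H}^0(S,\cO_S(D_\ell))=1$, and the universal property of the Albanese is the standard one.

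One step is stated too quickly. The claim that a general line of $D_\ell=D_{\ell'}$ ``meets both $\ell$ and $\ell'$ and hence lies in $\langle\ell,\ell'\rangle$'' uses that the two intersection points are distinct; if $\ell\cap\ell'=\{P\}$ and the general member of $D_\ell$ passes through $P$, the line need not lie in the plane $\langle\ell,\ell'\rangle$. In that case $P$ is an Eckardt point and $D_\ell$ would be a multiple of the elliptic curve $E$ of lines through $P$, which is excluded because $E^2<0$ while $D_\ell^2=5$; if instead the general member of $D_\ell$ avoids $P$, your containment argument applies with $\langle\ell,\ell'\rangle$ a $2$-plane. Relatedly, ``that scheme of lines is too small'' needs a word of care: when $\langle\ell,\ell'\rangle$ is a $3$-plane, $F(X\cap\langle\ell,\ell'\rangle)$ can be one-dimensional (a cone over a plane cubic through an Eckardt point), so the contradiction should again be run through $D_\ell^2=5$ versus the negative self-intersection of elliptic curves on $S$, rather than through a dimension count alone. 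With these cases filled in, the argument is complete.
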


\begin{fact}(\cite[4, 11]{roulleauelliptic})\label{roulleauellcurves}
The cone of lines through an Eckardt point is parame\-trised by an elliptic curve $E\subset S$, and conversely every $E$
gives rise to an Eckardt point. Hence there are at most finitely many elliptic curves in $S$, and for general $X$ there
are none.
\end{fact}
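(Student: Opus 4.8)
The plan is to prove the three assertions in turn — (a) an Eckardt point produces an elliptic curve in $S$, (b) for general $X$ there are none, (c) conversely every elliptic curve comes from an Eckardt point — using only the description of lines through a point (Lemma \ref{triplepointisspecial}) and the absence of rational curves in $S$ (Fact \ref{factnortlcurves}); the last point is where the real work lies.

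For the forward direction, let $x\in X$ be an Eckardt point. Any line $\ell\subset X$ with $x\in\ell$ has tangent direction in $T_xX$, hence $\ell\subseteq V:=T_xX\cap X$, a cubic surface in $\PP^3\cong T_xX$. Since $x$ is an Eckardt point, infinitely many such lines sweep out an irreducible cone $Y_0\subseteq V$ with vertex $x$; as $\dim Y_0=2=\dim V$, $Y_0$ is a union of components of $V$, and $V$ has no plane component — a plane $P\subseteq X$ would give a $\PP^2$ of lines inside $S$, contradicting Fact \ref{factnortlcurves}. Hence $V=Y_0$ is an irreducible cubic cone over a plane cubic curve $\bar C$, and the lines of $X$ through $x$ are parametrised, bijectively, by $\bar C$. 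This gives a curve $E\cong\bar C$ inside $S$; by Fact \ref{factnortlcurves} it is not rational, so the plane cubic $\bar C$ has no nodal or cuspidal component, i.e.\ it is a smooth elliptic curve, as claimed. (One can also see this in the coordinates of Lemma \ref{triplepointisspecial}: writing $F=x_0^2x_4+x_0Q+C$, the lines through $x$ form the common zero locus of the conic $Q(x_1,x_2,x_3,0)$ and the cubic $C(x_1,x_2,x_3,0)$ in $\PP^2$; a common factor which is a line, or a smooth conic dividing the cubic, would again put a rational curve in $S$, forcing $Q(x_1,x_2,x_3,0)\equiv0$ and identifying $E$ with $\{C(x_1,x_2,x_3,0)=0\}$.)

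For the genericity statement, take $X$ general, so $\rho(S)=1$ with $\mathrm{NS}(S)=\ZZ\,D_\ell$, $D_\ell^2=5$, and $K_S\equiv 3D_\ell$ numerically. If $E\subseteq S$ were an elliptic curve, write $E\equiv kD_\ell$; since $D_\ell$ is ample ($D_\ell^2>0$ and $\rho=1$) and $E$ is an effective curve, $k\geq1$. Then adjunction gives $0=2g(E)-2=E^2+E\cdot K_S=5k^2+15k=5k(k+3)>0$, a contradiction. Together with the converse below and the finiteness of the set of Eckardt points (\cite[8]{cg}), this also yields the finiteness claim for arbitrary $X$: each elliptic curve maps to an Eckardt point (namely the common point of all its lines), and distinct elliptic curves, being recovered as the cone of lines through that point, map to distinct points.

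The remaining and hardest point is the converse. Given an elliptic curve $E\subseteq S$, the lines $\{\ell_t\}_{t\in E}$ sweep out an irreducible surface $Y\subseteq X$ ruled in lines over the base $E$ (in particular $Y$ is not a plane). I would first compute $\deg_{\PP^4}Y=E\cdot K_S$ by intersecting with a general $2$-plane, observe that for a general line $\ell\subset X$ one has $E\neq D_\ell$ (genera differ) and $E\not\subseteq D_\ell$, so that $E\cdot D_\ell=\tfrac13\deg Y$ is a positive integer and $\deg Y=3m$ with $m\geq1$. The crux is then to exclude $Y$ being a genuine (non-conical) scroll. For this I would study the ruled surface $Y\to E$ together with its embedding line bundle $\cO_Y(1)$ (which restricts to $\cO(1)$ on each ruling), using the constraint $Y\subseteq X$ and, in the boundary case $\deg Y=3$, the fact that the unique surface scroll of degree three has rational base and hence cannot be ruled over $E$ — forcing $Y$ to be a cone. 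Once $Y$ is a cone, its vertex $v$ lies on $X$ with infinitely many lines of $X$ through it, so $v$ is an Eckardt point and $E$ is exactly its cone of lines. I expect the genuine obstacle to be precisely this step — showing $Y$ must be a cone (equivalently $m=1$, or directly exhibiting the common point of the $\ell_t$) — and it is here that one must exploit the special geometry of the cubic threefold, such as the normal bundles $\mathcal{O}\oplus\mathcal{O}$ or $\mathcal{O}(1)\oplus\mathcal{O}(-1)$ of its lines and the absence of rational curves in $S$.
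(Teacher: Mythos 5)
The paper offers no proof of this Fact --- it is imported from \cite{roulleauelliptic} --- so I assess your argument on its own terms. Two of the three assertions are handled correctly. For the forward direction: every line of $X$ through an Eckardt point $x$ lies in $V=T_xX\cap X$, no component of $V$ is a plane (a plane inside $X$ would put rational curves in $S$, against Fact \ref{factnortlcurves}), and a cubic surface with no plane component cannot split off a two-dimensional cone properly, so $V$ is an irreducible cubic cone over a plane cubic $\bar C$; applying Fact \ref{factnortlcurves} to the normalisation of a component of $\bar C$ excludes every reducible or singular plane cubic, leaving $\bar C$ smooth elliptic. (The irreducibility of the cone is asserted rather than argued, but it follows since each non-planar component of a cubic surface has degree at least two.) Your adjunction computation $0=E^2+E\cdot K_S=5k^2+15k>0$ in the Picard-rank-one case is also correct and gives a clean, self-contained proof that $S$ contains no elliptic curves for general $X$ --- which is the only part of the Fact the paper actually invokes later, in the proof of Proposition \ref{covgonfour}.

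The genuine gap is the converse, and with it the finiteness claim for an arbitrary smooth $X$: you must show that \emph{every} elliptic curve $E\subset S$ is the curve of lines through a single point. Your reduction --- show that the surface $Y=\bigcup_{t\in E}\ell_t$ is a cone --- is the right target, but the only case you indicate how to close is $\deg Y=3$ via the classification of minimal-degree surfaces; for $\deg Y>3$, or if the tautological map from the $\PP^1$-bundle over $E$ onto $Y$ is not birational, no argument is supplied, and you acknowledge this yourself. This step is precisely the content of Roulleau's theorem (his proof goes through the tangent/cotangent map of $S$ rather than through the geometry of $Y$) and is not a formality, so as written the proposal establishes neither the converse nor the finiteness statement. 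A smaller point in the same sketch: to get $E\cdot D_\ell>0$ you should note that the numerical class of $D_\ell$ is independent of $\ell$ while the divisors $D_\ell$ cover $S$, so it suffices to find one $\ell$ with $E\cap D_\ell\neq\emptyset$ and $E\not\subseteq D_\ell$.
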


\begin{lemma}\label{ellcurveembedded}
    Let $X$ be a smooth cubic threefold and $S$ its Fano surface. Any non-constant morphism $E\to S$ from a smooth
    elliptic curve has image a smooth elliptic curve. 
\end{lemma}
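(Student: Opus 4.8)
The plan is to reduce everything to two inputs: that the Albanese morphism $a\colon S\to\Alb(S)$ is a closed embedding (Fact~\ref{factnortlcurves}), and the classical rigidity statement that any morphism between abelian varieties carrying the identity to the identity is automatically a group homomorphism.

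First I would let $f\colon E\to S$ be the given non-constant morphism and set $C:=f(E)$, which is a closed (as $f$ is proper) integral curve in $S$. Fixing a point $e_0\in E$, I give $E$ the structure of an elliptic curve with origin $e_0$ and normalise the Albanese map so that $f(e_0)\mapsto 0$. Then the composite $g:=a\circ f\colon E\to\Alb(S)$ preserves the origin, hence by rigidity is a homomorphism of abelian varieties. It is non-constant, since otherwise $g$ would be constant and then $f$ would be constant too (as $a$ is injective). Therefore its image $C':=g(E)$ is a closed connected subgroup of $\Alb(S)$ of dimension one, i.e.\ a smooth elliptic curve; in particular $C'$ is a reduced closed subscheme of $\Alb(S)$.

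To conclude I would observe that $C'=g(E)=a(f(E))=a(C)$. Since $a$ is a closed embedding, $a(C)$ is a reduced closed subscheme of $\Alb(S)$ isomorphic to $C$; having the same underlying set as the reduced subscheme $C'$, it must equal $C'$. Hence $C\cong C'$ is a smooth elliptic curve, as claimed. I do not expect a serious obstacle here; the one point to be careful about is the identification of $g$ with a homomorphism, which is just the standard rigidity input applied to a genus-one curve once an origin is chosen. If one prefers to avoid choosing a group law on $E$, an alternative is to argue that the normalisation $\tilde C\to C$ is dominated by $E$, so $\tilde C$ has geometric genus at most one and cannot be rational by Fact~\ref{factnortlcurves} (a non-constant $\PP^1\to\tilde C\to S$ would contradict the embedding); thus $\tilde C$ is a smooth elliptic curve, the induced $\tilde C\to\Alb(S)$ is again a homomorphism with smooth image $a(C)$, and the same comparison forces $C=\tilde C$ to be smooth.
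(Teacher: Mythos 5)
Your argument is correct and is essentially the paper's own proof: both use Fact~\ref{factnortlcurves} to embed $S$ in $\Alb(S)$, then invoke rigidity to see that the composite $E\to\Alb(S)$ is a homomorphism up to translation, whose image is a one-dimensional abelian subvariety and hence smooth. You simply spell out more carefully the identification of the image of $E$ in $S$ with this abelian subvariety via the closed embedding, which the paper leaves implicit.
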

\begin{proof}
    From Fact \ref{factnortlcurves} we have that $S$ is embedded in its Albanese $\operatorname{Alb}(S)$, so in
    particular we get an induced morphism $E\to \operatorname{Alb}(S)$. This is necessarily a homomorphism of abelian
    varieties up to translation and so its image is a one dimensional abelian subvariety of $\operatorname{Alb}(S)$,
    hence smooth.
\end{proof}

\section{Covering families and birational very ampleness}

In \cite{bdelu} the following condition $\BVA_p$ is introduced and its connection to covering gonality is studied.

\begin{definition}
    Let $S$ be an irreducible projective variety. A line bundle $L$ on $S$ \textit{satisfies condition $\BVA_p$} for
    $p\geq0$ if there exists a closed subset $Z\subset S$ depending on $L$ such that for every $0$-dimensional subscheme
    $\xi$ of length $p+1$ with support disjoint from $Z$, the following restriction map is surjective
    $$\operatorname{H}^0(S, L) \to \operatorname{H}^0(S, L\otimes \cO_{\xi}). $$
\end{definition}

This definition says that outside a closed subvariety $Z$, sections of $L$ can ``separate any $p+1$ points'' meaning
that for any subset of $p$ points in a set of $p+1$ points on $S$ not contained in $Z$, there is a section of $L$
vanishing at all points of the subset, but not at the extra point. The relation to covering gonality is given by the
following.

\begin{proposition}(\cite[1.10]{bdelu})\label{bvapimpliescovgon}
    Let $S$ be a smooth projective variety. If $K_S$ satisfies $\BVA_p$ then $\cg(S)\geq p+2$.
\end{proposition}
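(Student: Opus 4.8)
The plan is to argue by contradiction: suppose $K_S$ satisfies $\BVA_p$ but $\cg(S)=c$ with $c\leq p+1$. By Lemma~\ref{dgonalcovfamily} there is a covering family $\cC\to T$ of $c$-gonal curves with dominant $F\colon\cC\to S$, and (using the reduction recalled just after that lemma) I may take $\cC,T$ smooth and $F$ generically finite, so $\dim T=\dim S-1$. For general $t\in T$ write $C:=\cC_t$, $\nu:=F_t\colon C\to S$ (birational onto its image), and let $\phi\colon C\to\bP^1$ have degree $c$; since $c=\gon(C)$, the pencil $|\phi^*\cO_{\bP^1}(1)|$ is base-point free. After choosing $t$ and a general fibre $D:=\phi^{-1}(q)=P_1+\cdots+P_c$ generically, I may assume the $\nu(P_i)$ are $c$ distinct points of $S$, lying off the bad locus $Z$ of $\BVA_p$, with $\nu$ a local isomorphism near each $P_i$.

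Next I feed this into $\BVA_p$: padding $\nu(D)$ with $p+1-c$ further general points of $S$ gives a reduced length-$(p+1)$ subscheme $\xi$ with support disjoint from $Z$, so $\operatorname{H}^0(S,K_S)\to\operatorname{H}^0(S,K_S\otimes\cO_\xi)$ is surjective; composing with $\cO_\xi\twoheadrightarrow\cO_{\nu(D)}$ and pulling back along $\nu$ (a local isomorphism near the $P_i$) shows that $P_1,\dots,P_c$ impose independent conditions on $|\nu^*K_S|$. By the long exact sequence of $0\to\nu^*K_S(-D)\to\nu^*K_S\to\nu^*K_S\otimes\cO_D\to0$ together with Serre duality, this is equivalent to $h^0(C,M+D)=h^0(C,M)$, where $M:=K_C-\nu^*K_S$.

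The core of the argument is to contradict this identity using that $D$ moves in a base-point-free pencil. First, $h^0(C,M)\geq1$: the bundle $M$ is the determinant of the rank $(\dim S-1)$ normal sheaf $N_\nu$ of $\nu$ (when $\dim S=2$ this is just $M=\cO_C(C)=N_{C/S}$ by adjunction), and since $F$ is dominant, hence generically smooth, the $\dim T=\dim S-1$ deformation sections of $N_\nu$ coming from the family $\cC\to T$ generate $N_\nu$ at a general point of $C$; their exterior product is then a nonzero section of $M$. Now fix $0\neq\tau\in\operatorname{H}^0(C,M)$ and two general fibres $D_{q_0},D_{q_1}$ of $\phi$, with defining sections $s_{q_0},s_{q_1}\in\operatorname{H}^0(C,\cO_C(D))$. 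Then the $h^0(C,M)$ sections $\tau'\cdot s_{q_0}$ ($\tau'\in\operatorname{H}^0(C,M)$) are independent in $\operatorname{H}^0(C,M+D)$ and $\tau\cdot s_{q_1}$ is independent of all of them: otherwise $\operatorname{div}(\tau)+D_{q_1}\geq D_{q_0}$, and as distinct fibres of $\phi$ are disjoint this forces $\operatorname{div}(\tau)\geq D_{q_0}$ — impossible, since $\operatorname{div}(\tau)$ is a fixed divisor while $D_{q_0}$ is general. Hence $h^0(C,M+D)\geq h^0(C,M)+1$, a contradiction.

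I expect the real work to be in the first paragraph rather than the final cohomological step: one must spread the gonality pencils and the "other points of a fibre" out over the parameter spaces and use dominance of $F\colon\cC\to S$ to justify that a general member $C$ has a gonality pencil whose general fibre lands on $c$ genuinely general points of $S$ (distinct, off $Z$, with $\nu$ a local isomorphism nearby). Granting that, the only other non-formal input is $h^0(C,M)\geq1$, i.e.\ that a member of a covering family deforms inside $S$, which is immediate from the family covering $S$.
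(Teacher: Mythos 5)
Your argument is correct, and it is worth noting that the paper itself offers no proof of this statement: it is quoted from \cite[Theorem 1.10]{bdelu}, where the argument runs through the Cayley--Bacharach property of the fibres of the gonality pencil (the content of Proposition \ref{cb} here), established by the trace-of-holomorphic-forms technique going back to Lopez--Pirola. Your route packages the same geometric input more directly: by Riemann--Roch and Serre duality on $C$, independence of the conditions imposed by a gonality fibre $D$ on $|\nu^*K_S|$ is equivalent to $h^0(M+D)=h^0(M)$ for $M=K_C-\nu^*K_S$, and you contradict this using (i) $h^0(M)\geq 1$, extracted from the normal sheaf of a member of a covering family, and (ii) the elementary observation that a general member of a base-point-free pencil cannot be contained in the fixed divisor of a nonzero section of $M$, whence $h^0(M+D)\geq h^0(M)+1$. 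Ingredient (i) is exactly where ``the curve moves in $S$'' enters, and (ii) is the cohomological shadow of the trace argument; what your version buys is a self-contained proof avoiding any discussion of induced differential forms. The only points deserving a word of care are routine: the general fibre of the $g^1_c$ on a general member avoids both $\nu^{-1}(Z)$ and the finitely many points where $\nu$ fails to be a local isomorphism onto its image (each is a finite subset of $C$, so only finitely many fibres are bad, and distinctness of the $\nu(P_i)$ follows likewise); and when $\nu$ is ramified the wedge of the deformation sections should be taken in the torsion-free quotient of $N_\nu$, whose determinant is $M(-R)$ for an effective $R$, so a nonzero section of it still yields one of $M$.
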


We will now study the condition $\BVA_p$ for the Fano scheme of lines on a smooth cubic threefold, showing that this
condition is much too strong; cf.\ Proposition \ref{bva1notbva2}.

\begin{lemma}\label{bva1}
    Let $X$ be a smooth cubic threefold and $S:=F(X)$ its Fano surface of lines. Then $K_S$ satisfies $\BVA_1$ and so
    $\cg(S)\geq3$. 
\end{lemma}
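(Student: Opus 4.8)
We must show $K_S$ satisfies $\BVA_1$: there is a closed $Z\subsetneq S$ such that for any two distinct points $s_1,s_2\in S\setminus Z$ (equivalently, any length-$2$ subscheme supported off $Z$, including length-$2$ subschemes concentrated at a single point), the restriction $\operatorname{H}^0(S,K_S)\to\operatorname{H}^0(S,K_S\otimes\cO_\xi)$ is surjective. Since $K_S$ is the canonical divisor cut out by hyperplanes in the Plücker embedding $S\subset\PP^9$, this is equivalent to saying that the Plücker embedding separates $s_1$ from $s_2$ and separates tangent directions outside of $Z$. In other words, we must exhibit a closed subset $Z$ off which the canonical/Plücker embedding is an immersion that separates pairs of points.

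**Plan.** The statement ``$K_S$ is very ample'' is already recorded in the excerpt (the Plücker embedding is a canonical embedding of $S$, see \cite[10.13]{cg}), and very ampleness of $K_S$ is precisely the statement that $K_S$ satisfies $\BVA_1$ with $Z=\emptyset$. So the core of the proof is simply to cite this fact: a line bundle $L$ is very ample if and only if, for every length-$2$ subscheme $\xi$, the map $\operatorname{H}^0(S,L)\to\operatorname{H}^0(S,L\otimes\cO_\xi)$ is surjective. Thus I would (i) recall that $K_S\cong\cO_S(1)$ under the Plücker embedding and that this embedding is closed, hence $K_S$ is very ample; (ii) recall the standard cohomological characterization of very ampleness in terms of separation of length-$2$ subschemes; (iii) conclude $\BVA_1$ holds, in fact with empty exceptional locus $Z$. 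Then Proposition \ref{bvapimpliescovgon} immediately gives $\cg(S)\geq 1+2=3$.

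**Main obstacle.** There really is no obstacle here — the only subtlety is bookkeeping about what $\BVA_1$ demands versus what very ampleness gives. One should be slightly careful that $\BVA_1$ quantifies over all length-$(p+1)=2$ zero-dimensional subschemes, which includes both pairs of distinct reduced points and single points with a tangent direction (length-$2$ non-reduced subschemes). Very ampleness of $K_S$ handles both: separating distinct points and separating tangent vectors are exactly the two conditions equivalent to the canonical map being a closed immersion. So the ``hard part'' is nil; one merely invokes that $K_S=\cO_S(1)$ is very ample.

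\begin{proof}
    As recalled in Section \ref{fanosurface}, the Pl\"ucker embedding realises $S\subset\PP^9$ as a projectively normal
    subvariety with $\cO_S(1)\cong K_S$, i.e.\ it is a canonical embedding (\cite[10.13]{cg}). In particular $K_S$ is
    very ample. Now recall the standard cohomological characterisation of very ampleness: a line bundle $L$ on a
    projective variety $S$ is very ample if and only if for every zero-dimensional subscheme $\xi\subset S$ of length
    $2$ the restriction map
    \[ \operatorname{H}^0(S,L)\longrightarrow \operatorname{H}^0(S,L\otimes\cO_\xi) \]
    is surjective; the case of two distinct reduced points corresponds to separating points, while the case of a
    length-$2$ non-reduced subscheme corresponds to separating tangent directions. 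Applying this to $L=K_S$, which is
    very ample, we conclude that $K_S$ satisfies condition $\BVA_1$ with exceptional locus $Z=\emptyset$. By
    Proposition \ref{bvapimpliescovgon} it follows that $\cg(S)\geq 1+2=3$.
\end{proof}
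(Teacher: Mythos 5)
Your proof is correct and takes essentially the same route as the paper, which simply notes that $K_S$ is very ample (being the Pl\"ucker, hence canonical, polarisation) and that very ampleness immediately gives $\BVA_1$, then invokes Proposition \ref{bvapimpliescovgon}. Your version merely spells out the standard equivalence between very ampleness and surjectivity of restriction to length-two subschemes; the incidental claim of projective normality is unnecessary but harmless.
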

\begin{proof}
    Since $K_S$ is very ample it is certainly $\BVA_1$. The claim about the covering gonality now follows from
    Proposition \ref{bvapimpliescovgon}. 
\end{proof}

\begin{remark}
    To obtain the above claim about the covering gonality in a different way but only for the general cubic threefold,
    one can degenerate to a $1$-nodal cubic $X_0$ whose $F(X_0)$ is a non-normal surface with desingularisation $\Sym^2 C$ for
    $C$ a smooth trigonal genus four curve, hence by \cite[1.6]{bastianelli} and Proposition \ref{propgonalitydrops} we get a
    lower bound of three again.
\end{remark}

\begin{proposition}\label{bva1notbva2}
    Let $X$ be a smooth cubic threefold and $S:=F(X)$ its Fano surface of lines. Then through a general point in $S$ (in
    fact one corresponding to a line not contained in the Hessian hypersurface) there pass five lines contained in
    $\rm{G}(2,5)\subset\PP^9$ which are trisecant to $S$. In particular $K_S$ does not satisfy $\BVA_2$. 
\end{proposition}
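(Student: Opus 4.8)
The plan is to relate the five lines through a general point $x \in X$ (which exist and are distinct by Lemma \ref{triplepointisspecial}(1) and Fact \ref{factlines}, since $x$ is not an Eckardt point and, for $x$ general, not on the Hessian) to five trisecant lines of $S$ inside the Grassmannian $\mathrm{G}(2,5)$. Recall that a point $\ell_0 \in S$ corresponds to a line in $X$, and the problem is genuinely about $\mathrm{G}(2,5) \subset \PP^9$: lines in the Pl\"ucker embedding correspond to pencils of lines in $\PP^4$, i.e.\ to a choice of a point $p$ and a $2$-plane $\Pi$ through $p$, the pencil being all lines through $p$ inside $\Pi$. So I first need to produce, starting from the line $\ell_0 \subset X$ corresponding to the general point of $S$, five such pencils each of which is trisecant to $S$, i.e.\ contains at least three lines lying on $X$.

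The key geometric input is the following: fix a general line $\ell_0 \subset X$ and a general point $x \in \ell_0$. By Fact \ref{factlines}(1), through $x$ there pass six distinct lines of $X$, one of which is $\ell_0$; call the other five $m_1, \dots, m_5$. For each $i$, consider the $2$-plane $\Pi_i := \langle \ell_0, m_i \rangle$ spanned by $\ell_0$ and $m_i$ (these are distinct lines through $x$, so this is a genuine $2$-plane). The intersection $\Pi_i \cap X$ is a cubic plane curve containing the two lines $\ell_0$ and $m_i$, hence it is the union of $\ell_0$, $m_i$, and a residual line $n_i \subset X$; moreover $n_i$ passes through the point $\ell_0 \cap m_i = x$ (since the three lines of a totally decomposed plane cubic section are concurrent or form a triangle — here two of them already meet at $x$, forcing the third through $x$ as well, or one reasons via the fact that $\ell_0,m_i,n_i$ all meet the line $\Pi_i \cap T$ appropriately; cleanest is that $\{n_i\} \subset \{m_1,\dots,m_5\}$ since $n_i$ is a line of $X$ through $x$ distinct from $\ell_0$). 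So in fact $n_i$ is one of the $m_j$, and the residual structure pairs up the five lines: generically the pencil $P_i$ of lines through $x$ inside $\Pi_i$ contains exactly the three lines $\ell_0, m_i, n_i$ of $X$ — giving a line in $\mathrm{G}(2,5)$ through the point $[\ell_0] \in S$ which is trisecant to $S$. Running over $i$ and checking that the resulting pencils $P_i$ are pairwise distinct (which holds generically, as the planes $\Pi_i$ are distinct for a general choice since no three of the six concurrent lines are coplanar when $x$ is off the Hessian, by Lemma \ref{triplepointisspecial}(2)) produces the required five lines; one should note the count $5 = \binom{6}{2}/\text{(pairing)}$ is exactly right, each pencil using up two of the five residual lines and $\ell_0$ anchoring all of them.

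The final sentence, that $K_S$ does not satisfy $\BVA_2$, then follows formally: if $K_S$ were $\BVA_2$, then for a general $0$-dimensional subscheme $\xi$ of length $3$ the restriction $\operatorname{H}^0(S,K_S) \to \operatorname{H}^0(S, K_S \otimes \cO_\xi)$ would be surjective. But take $\xi$ to be three collinear points of $S$ lying on one of the trisecant lines above, through the general point $[\ell_0]$: since hyperplanes in the Pl\"ucker embedding cut out $|K_S|$ (see Section \ref{fanosurface}), a section of $K_S$ vanishing at two of the three collinear points vanishes on the whole line, hence at the third — so the restriction map cannot be surjective onto the length-$3$ scheme. This contradicts $\BVA_2$ on the dense set of such $\xi$ (which exist through a general point by the above), so $K_S$ fails $\BVA_2$.

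The main obstacle I anticipate is the bookkeeping in the middle step: verifying rigorously that the residual line $n_i$ really is concurrent with $\ell_0$ and $m_i$ at $x$ (equivalently that each pencil $P_i$ one writes down is genuinely a pencil through the fixed point $x$, not through some other point of $\ell_0$), and that the five pencils are distinct and each is trisecant with multiplicity — i.e.\ that generically no pencil $P_i$ contains a fourth line of $X$ and that the three intersection points with $S$ are reduced. All of this should follow from genericity of $x$ and $\ell_0$ together with Fact \ref{factlines}(1) and Lemma \ref{triplepointisspecial}, but assembling it cleanly (and being careful about the passage from "line of $X$ through $x$" to "point of $S$ on a Pl\"ucker line") is where the care is needed; the reduction to the non-$\BVA_2$ statement at the end is then immediate from the canonical embedding.
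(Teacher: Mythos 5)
There is a genuine gap in the central construction. You fix a general point $x\in\ell_0$, take the five other lines $m_1,\dots,m_5$ of $X$ through $x$, and claim that the residual line $n_i$ in $\Pi_i\cap X=\ell_0+m_i+n_i$ again passes through $x$, so that the pencil of lines through $x$ in $\Pi_i$ is trisecant to $S$. This is false: a plane cubic that splits into three lines generically forms a triangle, and two of the lines meeting at $x$ in no way forces the third through $x$. Worse, your own hypothesis rules it out: by Lemma \ref{triplepointisspecial}(2), three of the six lines through $x$ are coplanar \emph{if and only if} $x$ lies on the Hessian hypersurface, and you have explicitly chosen $x$ off the Hessian. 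So each pencil $P_i$ contains exactly two points of $S$ (namely $[\ell_0]$ and $[m_i]$) and is a bisecant, not a trisecant; the fallback "$n_i$ is a line of $X$ through $x$ distinct from $\ell_0$" is circular. The parity problem you half-notice is a symptom: five lines cannot be partitioned into residual pairs, so the claimed "pairing up" cannot exist.

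The paper's construction is different and is forced by Lemma \ref{triplepointisspecial}(2): since the Hessian has degree five and $\ell_0$ is not contained in it, $\ell_0$ meets the Hessian in five points $x_1,\dots,x_5$, and at each $x_j$ the six lines of $X$ through $x_j$ split into two coplanar triples, one of which contains $\ell_0$. The pencil of lines through $x_j$ in the plane of that triple is then a line in $\rm{G}(2,5)$ through $[\ell_0]$ meeting $S$ in three points. So the five trisecants through $[\ell_0]$ are centred at five \emph{different} points of $\ell_0$, not at a single general point. Your final paragraph (a hyperplane of $\PP^9$ through two points of a line contains the third, so no canonical divisor separates two of the three collinear points from the last, contradicting $\BVA_2$) is correct and is exactly the paper's concluding step, but it rests on the trisecants you have not actually produced.
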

\begin{proof}
    From Lemma \ref{triplepointisspecial} we know that at the points of $X$ lying on the intersection with the Hessian
    hypersurface, there pass two tuples of three coplanar lines. Since the Hessian has degree five in $\PP^4$, we see
    that a general line $\ell$ in $X$ meets the Hessian in five points and through each of these points there will be
    two residual lines coplanar to $\ell$. This means that the induced line in the Grassmannian (corresponding to lines
    in this plane through the point) will be trisecant to the Fano surface $S$. To summarise, the general point in $S$
    admits five lines through it which are trisecant to $S$ in $\PP^9$, so that no canonical divisor on $S$ passes
    through exactly two of the intersection points between $S$ and one of those lines. 
\end{proof}

Even though $\BVA_2$ does not hold, there are more refined properties of sections of the canonical divisor which will
however apply to our situation. The following is a direct consequence of \cite[4.2, 4.7]{bastianelli}, the ideas going
back further e.g.\ to \cite{lopezpirola}.

\begin{proposition}\label{cb}
    Let $S$ be a smooth projective variety and $\cC/T\to S$ a covering family of $d$-gonal curves. Let $t\in T$ a
    general point and $\xi=p_1+\ldots+p_d$ a $0$-cycle on $S$ obtained as the image of a general effective divisor in the
    $g^1_d$ on $\cC_t$. Then $\xi$ satisfies the Cayley-Bacharach property with respect to sections of $K_S$, meaning
    that for any $1\leq j\leq d$ and any effective canonical divisor $D\in|K_S|$ passing through
    $p_1,\ldots,\hat{p_j},\ldots,p_d$ we have $p_j\in D$.
\end{proposition}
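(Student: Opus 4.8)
The plan is to reduce the statement to the classical Cayley–Bacharach mechanism for canonical-type linear systems, as developed in \cite{lopezpirola}, \cite{bastianelli}. Fix a general $t\in T$ and a general effective divisor $\xi=p_1+\cdots+p_d$ in the chosen $g^1_d$ on the smooth curve $\cC_t$; since $\cC_t\to S$ is birational onto its image we may identify $\xi$ with its image $0$-cycle on $S$, and since $t$ is general and $F$ generically finite, the points $p_1,\ldots,p_d$ are distinct and lie away from the indeterminacy loci of the various constructions. The key observation is that $\xi$ is cut on $\cC_t$ by a fibre of a degree-$d$ map $\f\colon \cC_t\to\PP^1$, so that $\xi$ moves in a base-point-free pencil $|\xi|=g^1_d$ on $\cC_t$; in particular $h^0(\cC_t,\cO_{\cC_t}(\xi))\geq 2$.

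First I would set up the comparison between canonical divisors on $S$ and canonical-type divisors on $\cC_t$. Because $\cC_t$ is a smooth curve mapping to the surface $S$, restriction gives a map $\operatorname{H}^0(S,K_S)\to \operatorname{H}^0(\cC_t, K_S|_{\cC_t})$; the relevant point is that a section of $K_S$ vanishing on $p_1,\ldots,\hat p_j,\ldots,p_d$ restricts to a section of a line bundle on $\cC_t$ of degree $\geq d-1$ which vanishes on the divisor $\xi-p_j$ of degree $d-1$. The Cayley–Bacharach conclusion $p_j\in D$ is then exactly the failure of such a section to "separate $p_j$ from the other $d-1$ points'', which by the usual argument is forced once $\xi$ moves in a pencil: if some $D\in|K_S|$ contained $p_1,\ldots,\hat p_j,\ldots,p_d$ but not $p_j$, then pulling back $D$ to $\cC_t$ and subtracting the fixed part $\xi-p_j$ would exhibit an effective divisor class on $\cC_t$ showing that $p_j$ imposes an independent condition, contradicting the fact that the $g^1_d$ through $\xi-p_j$ already forces $p_j$ (equivalently, $h^0(\cO_{\cC_t}(\xi))=h^0(\cO_{\cC_t}(\xi-p_j))$ because $|\xi|$ is a base-point-free pencil so $\xi-p_j$ is not special in that pencil). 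This is precisely the content of \cite[4.2, 4.7]{bastianelli}, applied with $L=K_S$; I would simply verify the hypotheses of those results hold in our setting, namely that $\cC/T$ is a covering family in the sense of the earlier definition and that $t$, $\xi$ are chosen generically.

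The main obstacle, and the only place where genericity of $t$ (rather than an arbitrary fibre) is genuinely used, is ensuring that the divisor $\xi$ chosen in the $g^1_d$ on $\cC_t$ is in \emph{general position} with respect to the canonical system: one must know that the image points $p_i$ are distinct, reduced, avoid the base locus / non-birational locus of $F$, and that the $g^1_d$ itself behaves uniformly in the family (e.g.\ that after shrinking $T$ the gonality pencil exists in a family over $T$, which is available from the construction of covering families of $d$-gonal curves in Lemma \ref{dgonalcovfamily} and Proposition \ref{propgonalitydrops}). Once this genericity is in place, the Cayley–Bacharach property is a formal consequence of the pencil structure on $\xi$ together with the restriction map $\operatorname{H}^0(S,K_S)\to\operatorname{H}^0(\cC_t,K_S|_{\cC_t})$, exactly as in the cited references; so the proof is really a citation plus a check that the normalisation/genericity reductions from Section \ref{defsection} put us in the situation those references require.
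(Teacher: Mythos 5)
Your citation of \cite[4.2, 4.7]{bastianelli} is exactly what the paper does --- it offers no argument beyond that reference --- so as a pointer your proposal lands in the right place. However, the mechanism you sketch to justify the citation has a genuine gap: nowhere do you use the hypothesis that $\cC/T$ is a \emph{covering} family, and the claim that Cayley--Bacharach is ``a formal consequence of the pencil structure on $\xi$ together with the restriction map $\operatorname{H}^0(S,K_S)\to\operatorname{H}^0(\cC_t,K_S|_{\cC_t})$'' is false as stated. For a single rigid $d$-gonal curve on a surface of general type there is no reason for the fibres of its $g^1_d$ to satisfy Cayley--Bacharach with respect to $|K_S|$: the base-point-free pencil trick controls sections of $K_{\cC_t}$, not of the restricted bundle $K_S|_{\cC_t}$, and these differ by the conormal twist $K_S|_{\cC_t}=K_{\cC_t}\otimes N_{\cC_t/S}^{\vee}$. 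The essential input from the covering hypothesis --- the heart of the Lopez--Pirola/Bastianelli argument --- is that the moving family produces a non-trivial section of the normal sheaf of the general $\cC_t$; contracting a $2$-form $\omega\in\operatorname{H}^0(S,K_S)$ against this deformation converts $\omega|_{\cC_t}$ into a genuine section of $K_{\cC_t}$ whose zero divisor contains that of $\omega|_{\cC_t}$ away from the zeros of the normal-bundle section (which genericity of $t$ and of the fibre lets you avoid). Only then does the pencil argument apply: Riemann--Roch for $\xi$ and $\xi-p_j$ together with $h^0(\xi-p_j)=h^0(\xi)-1$ yields $h^0(K_{\cC_t}-\xi+p_j)=h^0(K_{\cC_t}-\xi)$, which is the Cayley--Bacharach statement on the curve and hence, via the contraction, for $|K_S|$.

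A smaller but real error in the same passage: your parenthetical ``$h^0(\cO_{\cC_t}(\xi))=h^0(\cO_{\cC_t}(\xi-p_j))$ because $|\xi|$ is a base-point-free pencil'' is backwards --- base-point-freeness says precisely that $h^0(\xi-p_j)=h^0(\xi)-1$. The equality of $h^0$'s the argument actually needs is the Serre-dual one, $h^0(K_{\cC_t}-\xi+p_j)=h^0(K_{\cC_t}-\xi)$, which follows from that drop by Riemann--Roch. So while the reference you give does prove the proposition, the reconstruction you offer would not survive being written out in full without restoring the deformation-theoretic step that makes the covering hypothesis do its work.
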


\begin{remark}\label{remsepfibreg1d}
    The above proposition implies that if $S$ is a smooth projective canonically polarised variety and $f:\cC/T\to S$ a
    covering family of $d$-gonal curves, then the image of the general fibre of the $g^1_d$ of the general member
    $\cC_t$ of this family consists of $d$ distinct points on $S$ lying on a $(d-2)$-plane under the canonical
    embedding. This can be proved more directly using geometric Riemann-Roch like in the proof of \cite[1.10]{bdelu}.
    Note however that the property that a number of points span a plane of smaller dimension than expected is strictly
    weaker than the above Cayley-Bacharach property for those points: consider for example four points in $\PP^3$, three
    of which are collinear - they span a 2-plane but do not satisfy the Cayley-Bacharach property with respect to
    hyperplanes.
\end{remark}

In the particular cases we will be interested in, the geometry of the varieties under consideration imposes conditions
on the above configurations of $d$ points.

\section{Lines in special position in $\bP^4$}\label{sectionseppoints}

The following is an auxiliary section containing a technical result on configurations of lines in $\PP^4$. Let $\rm{G}(2,5)$
denote the Grassmannian of lines in $\PP^4$. For a fixed $\Lambda\cong\PP^2\subset\PP^4$ we have an induced divisor
$\sigma_1(\Lambda)\subset \rm{G}(2,5)$ called the first Schubert cycle, whose points correspond to lines meeting $\Lambda$
and whose linear equivalence class is that of the very ample divisor inducing the Pl\"ucker embedding. Note that as
mentioned after Lemma \ref{planesgivingsection}, the Pl\"ucker ample has ten sections, whereas there is only a six
dimensional family of sections of type $\sigma_1$.

\begin{definition}(\cite[3.2]{bastianelli})
    Let $\ell_1,\ldots,\ell_d\subset \PP^4$ be $d$ distinct lines. We say that the $\ell_i$ are in \textit{special
    position} (with respect to 2-planes) if for every $1\leq j\leq d$ and a 2-plane $\Lambda\subset\PP^4$ intersecting
    $\ell_1,\ldots,\hat{\ell_i},\ldots,\ell_d$ we have that $\ell_i$ also meets $\Lambda$. Analogously, we define $d$
    distinct lines in $\bP^3$ to be in special position (with respect to lines).
\end{definition}

Recall that the Pl\"ucker embedding on $\rm{G}(2,5)$ induces the canonical embedding on the Fano scheme $S$ of a cubic
threefold. The significance of the above definition in our context comes from Proposition \ref{cb}, which implies that
if $\cC/T\to S$ a $d$-gonal covering family of curves of the Fano scheme of lines, then the image of the general fibre
of the $g^1_d$ consists of $d$ points corresponding to lines in $\PP^4$ which are in special position.

We note that in order to check whether a set of distinct points $\{[\ell_1],\ldots,[\ell_d]\}\subset \rm{G}(2,5)$ corresponds
to lines which are \textit{not} in special position in $\PP^4$, it suffices to find a hyperplane $\sigma_1$ which
contains $d-1$ of them but not the last one.

\begin{proposition} \label{propseppoints}
    \begin{enumerate}
        \item Three distinct points $[\ell_1], [\ell_2], [\ell_3]\in \rm{G}(2,5)$ correspond to lines in special position if
        and only if the $\ell_i$ lie in a 2-plane in $\PP^4$ and all meet at a point.
        \item Four distinct points $[\ell_1], \ldots, [\ell_4]\in \rm{G}(2,5)$ correspond to lines in special position if and
        only if one of the following configurations occurs:
            \begin{enumerate}
                \item The lines $\ell_1,\ldots,\ell_4$ lie in a 2-plane and meet at a point.
                \item The lines $\ell_1,\ldots,\ell_4$ lie in a 2-plane and no three of them meet at a point.
                \item The lines $\ell_1,\ldots,\ell_4$ span a 3-plane $H$ and lie on the same ruling of a smooth quadric
                    $Q\subset H$.
                \item The lines $\ell_1,\ldots,\ell_4$ span a 3-plane, $\ell_1,\ell_2$ meet at a point $P$,
                    $\ell_3,\ell_4$ meet at a point $P'\neq P$ and the induced 2-planes $\langle\ell_1,
                    \ell_2\rangle$, $\langle \ell_3,\ell_4\rangle$ meet along the line $PP'$.
                \item The lines $\ell_1,\ell_2,\ell_3,\ell_4$ span a 3-plane, all meet at a point and no three lie in a
                    2-plane.
            \end{enumerate}
    \end{enumerate}
\end{proposition}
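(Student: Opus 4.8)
The plan is to work directly with the Schubert cycle characterization: by the remark preceding the Proposition, $d$ distinct points $[\ell_1],\dots,[\ell_d]\in\rm{G}(2,5)$ are \emph{not} in special position precisely when some $\sigma_1(\Lambda)$, i.e.\ the condition of meeting a fixed $2$-plane $\Lambda\subset\PP^4$, contains $d-1$ of them but not the last. So the strategy is, for each candidate configuration, either to exhibit such a separating $2$-plane (showing the lines are \emph{not} in special position) or to prove no such plane exists (showing they \emph{are}). A useful reduction throughout: if the lines $\ell_1,\dots,\ell_d$ span a $\PP^3\subset\PP^4$, then a $2$-plane $\Lambda\subset\PP^4$ either lies in that $\PP^3$, or meets it in a line, or meets it in a point; in the latter two cases a plane meeting $\ell_i$ forces strong incidence, and generically one can arrange a line or a point disjoint from a chosen $\ell_j$ while meeting the others. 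This lets one reduce the "span a $3$-plane" cases to the analogous separation problem \emph{by lines in $\PP^3$}, and the "lie in a $2$-plane" cases to separation by points in $\PP^2$.

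\begin{proof}[Sketch of proof strategy for Proposition \ref{propseppoints}]
\textbf{Part (1).} First reduce to the planar case: if $\ell_1,\ell_2,\ell_3$ span $\PP^4$ or a $\PP^3$, one easily finds a $2$-plane through two of them missing the third (e.g.\ if they span $\PP^3$, pick a point on $\ell_1\cap\ell_2$ if they meet, or a transversal line, and a point not on $\ell_3$, spanning a $2$-plane). So assume the $\ell_i$ are coplanar in a plane $\Lambda_0$. Now a $2$-plane $\Lambda$ in $\PP^4$ meets $\ell_i$ iff $\Lambda\cap\Lambda_0$ meets $\ell_i$; since $\Lambda\cap\Lambda_0$ can be an arbitrary line in $\Lambda_0$ (or all of $\Lambda_0$), the question reduces to: three lines in $\PP^2$ are in special position with respect to lines iff no line of $\PP^2$ hits two of them and misses the third. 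Two coplanar lines always meet; a line through their common point $\ell_i\cap\ell_j$ misses $\ell_k$ unless that point lies on $\ell_k$. Hence the $\ell_i$ are in special position iff $\ell_1\cap\ell_2=\ell_1\cap\ell_3=\ell_2\cap\ell_3$, i.e.\ all three are concurrent.

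\textbf{Part (2).} The four lines span either $\PP^4$, a $\PP^3$, or a $\PP^2$.

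\emph{Case: span $\PP^4$.} Then through any three of them there is a $\PP^3$ (their span, if the three are independent enough) or one can find a $2$-plane meeting three and missing the fourth by a dimension count: the fourth line, being outside the span of the other three, can be avoided. One shows this case never gives special position.

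\emph{Case: coplanar in $\Lambda_0\cong\PP^2$.} As in Part (1), reduce to four lines in $\PP^2$ in special position with respect to lines. Any two coplanar lines meet; a separating line through $\ell_i\cap\ell_j$ works unless forced to pass through the other intersection points. Case analysis on the incidences $\ell_i\cap\ell_j$ yields exactly: (a) all four concurrent, or (b) no three concurrent (the generic configuration, a complete quadrilateral) — any intermediate case (exactly three concurrent) admits a separating line.

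\emph{Case: span a $\PP^3$, call it $H$.} A $2$-plane $\Lambda\subset\PP^4$ meeting all four lines either lies in $H$ (and then meets $\ell_i$ iff the plane does, inside $H$ this is automatic by dimension — any $2$-plane in $\PP^3$ meets any line), or meets $H$ in a line $m$ (and then $\Lambda$ meets $\ell_i$ iff $m$ meets $\ell_i$), or meets $H$ in a point. The point case requires that point to lie on all four lines, which is too special. So the real content is: the four lines are in special position with respect to $2$-planes in $\PP^4$ iff they are in special position with respect to lines in $H\cong\PP^3$. Now invoke the classical classification of four lines in $\PP^3$ "in special position with respect to lines", equivalently: no line of $\PP^3$ meets three of them and misses the fourth. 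Four lines in $\PP^3$ in general position have a pencil of transversals (two if we count multiplicity), by the classical secant-line geometry of a quadric; the classification splits according to how the lines meet or coincide in rulings. The three sub-configurations (c), (d), (e) are exactly the ways four lines in $\PP^3$ fail to admit a transversal line missing one of them: (c) four lines on one ruling of a smooth quadric — every transversal is a line of the other ruling, hence meets all four; (d) two pairs of incident lines whose planes meet along the line joining the two nodes — that joining line is the only relevant transversal locus; (e) four concurrent lines, no three coplanar — every line through the common point works and no line misses one while hitting three (one checks three concurrent non-coplanar lines already force the transversal through the vertex).

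In every non-listed configuration one produces an explicit separating $2$-plane (in $\PP^4$) or separating line (in $\PP^3$), and for each of (a)--(e) one verifies directly that no such separator exists.
\end{proof}

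\emph{Remark on the main obstacle.} The combinatorial heart is the $\PP^3$ case: one must enumerate all incidence patterns of four lines in $\PP^3$ (coincidences of intersection points, coplanarities, common rulings) and, for each, decide whether a transversal line can meet exactly three. This is where the classical geometry of transversals to four lines (via the quadric containing any three pairwise-skew ones) must be used carefully to be sure the list (c)--(e) is complete; the planar and $\PP^4$-spanning cases are comparatively routine.
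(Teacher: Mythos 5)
Your overall strategy is the same as the paper's: both arguments reduce the four-line case to lines contained in a hyperplane $H\cong\PP^3$ that are in special position there with respect to lines, and then classify those configurations (the paper imports this reduction from Bastianelli's Theorem 3.3 rather than reproving it; for part (1) the paper instead notes that special position forces the three Pl\"ucker points onto a line of $\PP^9$, which by B\'ezout lies in $\mathrm{G}(2,5)$ and is therefore a pencil, while your direct reduction to concurrency of coplanar lines is a legitimate elementary alternative). The problem is that the combinatorial heart of part (2) --- proving that four lines in $\PP^3$ in special position with respect to lines must fall into exactly one of (c), (d), (e) --- is asserted rather than carried out. You list the three answers and say that ``in every non-listed configuration one produces an explicit separating line,'' but you never enumerate the non-listed configurations or exhibit the separators, and your closing remark concedes exactly this. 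The paper does do this work: if the four lines are pairwise skew it invokes the quadric through three of them to land in (c); if two of them meet at a point $P$, it shows that either $\ell_3$ passes through $P$, forcing $\ell_4$ through $P$ and no three coplanar (configuration (e)), or $\ell_3$ misses $P$, in which case $\ell_4$ must lie in the plane $\langle P,\ell_3\rangle$ and the point $\ell_3\cap\ell_4$ must lie on the line $\langle\ell_1,\ell_2\rangle\cap\langle\ell_3,\ell_4\rangle$, which is configuration (d). Without some such dichotomy your proof is a statement of the answer, not a derivation of it.

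A second, smaller gap: several of your separation claims rest on dimension counts (``the fourth line, being outside the span of the other three, can be avoided,'' ``generically one can arrange\ldots''). A dimension count shows that the 2-planes meeting all $d$ lines form a proper closed subset of those meeting $d-1$ of them only if one also rules out \emph{containment} of the relevant incidence loci --- and containment is precisely what special position asserts. So each such step needs either an explicit separating 2-plane or an irreducibility argument for the incidence variety being cut down; this is why the paper leans on Bastianelli's results both for the reduction to $\PP^3$ and for the all-skew case. The verification that each of (a)--(e) actually is in special position is left to the reader in both your write-up and the paper, so no complaint there.
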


\begin{figure}
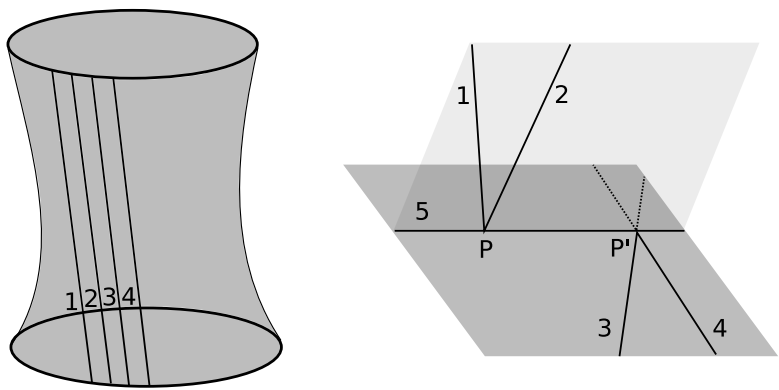

\psset{xunit=.4pt,yunit=.4pt,runit=.4pt}


\caption{Configurations (c), (d) of Proposition \ref{propseppoints} respectively.}
\label{sketch}
\end{figure}

\begin{proof}
For the first part of three distinct lines, the condition that they are in special position in $\PP^4$ is equivalent to
the points $[\ell_i]\in \rm{G}(2,5)$ all lying on a line $L\subset \PP^9$. Since $\rm{G}(2,5)$ is cut out by quadrics,
$L$ has to be contained in the Grassmannian by B\'ezout, which is equivalent to the condition of the proposition.

For the second part, note first that it is easy to check that configurations (a)-(e) all correspond to lines which are
in special position. Conversely, assume that $\ell_1,\ldots,\ell_4$ are in special position. By Theorem
\cite[3.3]{bastianelli} four lines in $\PP^4$ that are in special position must necessarily be contained in a 3-plane
$H$ and moreover they must be in special position there with respect to lines, since then any line $\ell$ meeting three
of the four lines has to meet all of them, otherwise the span of $\ell$ and a point outside $H$ would be a 2-plane
contradicting the special position of the four lines.  

Assume first that they span a 2-plane. If no three of them are concurrent then we are in configuration (b). If instead
three of the lines, say $\ell_1,\ell_2,\ell_3$ meet at a point $P$, then a general line in $H$ containing $P$ must intersect
$\ell_4$ necessarily at $P$, so we are in configuration (a).

Assume next that the lines span the 3-plane $H$. If they are skew then \cite[3.6]{bastianelli} gives that we are in
configuration (c). Assume henceforth that two of the lines, say $\ell_1,\ell_2$, meet at a point $P$. Then any line
meeting $\ell_3$ and passing through $P$ must intersect $\ell_4$. If $\ell_3$ passes through $P$ then $\ell_4$ must also
pass through $P$. Moreover no three of them can lie on a 2-plane $\Lambda$ because a line on $\Lambda$ not passing
through $P$ intersects the three lines on the plane but not the fourth. This implies we are in configuration (e). If on
the other hand $\ell_3$ does not pass through $P$ then $\ell_4$ has to be contained in the plane spanned by $P$ and
$\ell_3$. Moreover the point of intersection of $\ell_3$ with $\ell_4$ must lie on the intersection line $\ell_5$ of the
planes spanned by $\ell_1,\ell_2$ and $\ell_3,\ell_4$ respectively; otherwise, if we took a line not equal to $\ell_5$
in the first of those planes passing through the intersection of $\ell_3$ with $\ell_5$, then this would intersect
$\ell_1,\ell_2,\ell_3$ but not $\ell_4$. This implies we are in configuration (d).
\end{proof}

Making a complete list of all possible configurations of five lines in special position in $\bP^4$ is cumbersome, so we
will soon restrict ourselves to a more particular situation, which will be precisely what is required in the case
of the Fano scheme of lines of a cubic threefold. First, the following gives a structure for the most general
configuration of five lines in special position.

\begin{proposition}\label{propfivelinesuniquesecant}
    Suppose $\ell_1,\ldots,\ell_5$ are five lines in ${\mathbb P}^4$ in special position with respect to 2-planes.
    Assume that any pair of lines spans a 3-plane and any triple of lines spans the whole space. Then there is a
    unique common secant line to the above five lines. 
\end{proposition}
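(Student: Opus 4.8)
The plan is to settle uniqueness by elementary incidence geometry, then produce a common secant by dimension counts with pencils and nets of $2$-planes, feeding in the special position hypothesis; the real work is in excluding one degenerate configuration at the end.

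\emph{Uniqueness.} The span hypotheses force the $\ell_i$ to be pairwise skew (incident lines span only a plane) and any three of them to span $\PP^4$. Given three of them, say $\ell_a,\ell_b,\ell_c$, the space $\langle\ell_a,\ell_b\rangle$ is a $\PP^3$ not containing $\ell_c$, so $\ell_c$ meets it in one point $q$, lying on neither $\ell_a$ nor $\ell_b$. A common transversal of $\ell_a,\ell_b,\ell_c$ must lie in $\langle\ell_a,\ell_b\rangle$ and pass through $q$; those through $q$ meeting $\ell_a$ lie on $\langle q,\ell_a\rangle$ and those meeting $\ell_b$ on $\langle q,\ell_b\rangle$, two distinct planes of $\langle\ell_a,\ell_b\rangle$, whose intersection is the unique such transversal. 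Since a common secant of all five lines is in particular a common transversal of $\ell_1,\ell_2,\ell_3$, there is at most one.

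\emph{Existence, reduction.} Let $m$ be the unique common transversal of $\ell_1,\ell_2,\ell_3$, and set $H:=\langle\ell_4,\ell_5\rangle\cong\PP^3$. Each of $\ell_1,\ell_2,\ell_3$ meets $H$ in exactly one point $r_1,r_2,r_3$ (none lies in $H$, as the triple $\{\ell_i,\ell_4,\ell_5\}$ spans $\PP^4$), and the $r_i$ are pairwise distinct by skewness. Every $2$-plane of $H$ containing $\overline{r_1r_2}$ meets $\ell_4,\ell_5$ (a plane inside the $\PP^3$ $H$ meets every line of $H$) and contains $r_1,r_2$, so meets $\ell_1,\ell_2$; by the special position hypothesis (with the omitted line $\ell_3$) it meets $\ell_3$, hence contains $r_3=\ell_3\cap H$. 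As $r_3$ lies on every plane of this pencil, $r_3\in\overline{r_1r_2}$, so $r_1,r_2,r_3$ are collinear, and the line $m'$ they span meets $\ell_1,\ell_2,\ell_3$; by uniqueness $m'=m$, so $m\subseteq H$. Now consider the $\PP^2$ of all $2$-planes of $\PP^4$ containing $m$; each meets $\ell_1,\ell_2,\ell_3$. If $m$ met $\ell_5$, then each such $2$-plane would meet $\ell_5$, hence (special position, omitting $\ell_4$) would meet $\ell_4$; but when $m\cap\ell_4=\emptyset$ the $2$-planes through $m$ meeting $\ell_4$ are exactly those contained in $\langle m,\ell_4\rangle\cong\PP^3$, a pencil, which cannot fill the $\PP^2$. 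So $m$ meets $\ell_5$ implies $m$ meets $\ell_4$, and by symmetry $m$ meets $\ell_4$ implies $m$ meets $\ell_5$. If $m$ meets both, then $m$ is a common secant of all five lines, and by the uniqueness part it is the only one, so we are done.

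\emph{The crux.} There remains the case that $m$ is skew to $\ell_4$ and to $\ell_5$. Then $m,\ell_4,\ell_5$ are three pairwise skew lines in the $\PP^3$ $H$, hence lie in one ruling of a smooth quadric surface $Q\subset H$, and $\ell_1,\ell_2,\ell_3$ meet $H$ transversally in the three points $r_i=m\cap\ell_i\in m\subset Q$. Excluding this configuration is where I expect the main difficulty to lie: it is not ruled out by the pencil/net arguments above, because in it every $2$-plane one can write down directly meets either exactly three of the five lines (when it is not contained in $H$ but is spanned by a ruling line of $Q$ and a point of some $\ell_i$, or similar) or all five (when it lies in $H$ and contains $m$), so special position is never triggered. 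One is therefore forced to exhibit a $2$-plane meeting exactly four of the lines but not the fifth; since such $2$-planes must be transverse to $H$, their intersections with $\ell_1,\ell_2,\ell_3$ occur off $H$ and genuinely involve those lines, and the task reduces to a finite-dimensional computation. I would carry it out in coordinates, normalising $H=\{x_4=0\}$, $m=\langle e_0,e_1\rangle$, $\ell_4=\langle e_2,e_3\rangle$, $\ell_5=\langle e_0+e_2,e_1+e_3\rangle$, writing $\ell_i=\langle r_i,u_i\rangle$ for $i=1,2,3$ with $r_1,r_2,r_3$ general on $m$ and $u_i$ off $H$, and checking that the two incidence determinants which the omitted‑line conditions of special position would have to force to vanish simultaneously are in fact algebraically independent (equivalently: that in this configuration there is always a plane $\langle z_1,z_2,z_3\rangle$, $z_i\in\ell_i$, meeting $\ell_5$ but not $\ell_4$). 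This contradicts special position, so the degenerate case does not occur, $m$ meets $\ell_4$ and $\ell_5$, and the proposition follows.
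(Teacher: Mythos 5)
Your uniqueness argument and the reduction step are correct and attractive: the span hypotheses do force the lines to be pairwise skew, the unique common transversal $m$ of $\ell_1,\ell_2,\ell_3$ exists by intersecting the two planes $\langle q,\ell_1\rangle$, $\langle q,\ell_2\rangle$ inside $\langle\ell_1,\ell_2\rangle$, the pencil-of-planes argument in $H=\langle\ell_4,\ell_5\rangle$ correctly shows $r_1,r_2,r_3$ are collinear so that $m\subset H$, and the count of planes through $m$ correctly shows that $m$ meets $\ell_4$ if and only if it meets $\ell_5$. This is a genuinely different route from the paper, which instead projects from a general point of $\ell_5$, shows the images of $\ell_1,\dots,\ell_4$ lie in one ruling of a smooth quadric, and assembles the cones over the transversal conics into a rational normal cubic scroll $S_{1,2}$ containing $\ell_1,\dots,\ell_4$ as ruling lines; the common secant is then the directrix.

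However, the proof is not complete: the case you correctly isolate as "the crux" --- $m$ skew to both $\ell_4$ and $\ell_5$, so that $m,\ell_4,\ell_5$ are three lines of one ruling of a smooth quadric $Q\subset H$ --- is never actually excluded. What you offer there is a plan ("I would carry it out in coordinates \dots and check that the two incidence determinants are algebraically independent"), not an argument, and the claim is not self-evidently true: the relevant incidence conditions on $\ell_1\times\ell_2\times\ell_3\cong(\PP^1)^3$ are two multidegree-$(1,1,1)$ forms whose zero loci special position forces to coincide, and one has to verify in this specific configuration that they cannot be proportional (taking care of the degenerate triples $(z_1,z_2,z_3)$ spanning only a line, e.g.\ $(r_1,r_2,r_3)$, where both determinants vanish for trivial reasons). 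Your own discussion already shows that the obvious candidate planes fail to violate special position here, so this is not a formality. This is exactly the point at which the paper's scroll lemma does the real work: the directrix of the scroll through $\ell_1,\dots,\ell_4$ is a common transversal of all four ruling lines, hence equals $m$ by your uniqueness, forcing $m\cap\ell_4\neq\emptyset$ (and symmetrically $m\cap\ell_5\neq\emptyset$), so the crux configuration simply cannot arise. Until you either carry out the determinant computation or supply a synthetic substitute (for instance, analysing the pencils $\langle n_i,z_j\rangle$ where $n_i$ is the second-ruling line of $Q$ through $r_i$, which forces $n_1\subset\langle\ell_2,\ell_3\rangle$ etc., and then extracting a contradiction), the existence half of the proposition remains unproved.
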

\begin{proof}
    By the lemma which follows, the lines $\ell_1,\ldots, \ell_4$ lie on the ruling of a smooth rational normal cubic
    scroll $S_{1,2} \subset\bP^4$. Hence any line meeting $\ell_1,\ldots, \ell_4$ is contained in $S_{1,2}$ and does not
    belong to the ruling. By \cite[8.20]{harris} such a line is unique and coincides with the directrix of the scroll
    and it must intersect also $\ell_5$ since the lines are in special position.
\end{proof}

\begin{lemma}
    Under the assumptions of the proposition, any four of the above lines are lines of the ruling of a smooth
    rational normal cubic scroll in ${\mathbb P}^4$ and the fifth line intersects the directrix of the scroll.
\end{lemma}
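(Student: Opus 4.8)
The plan is to show that $\ell_1,\dots,\ell_5$ admit a common transversal line $D$, and then to invoke the classical geometry of the rational normal cubic scroll in $\PP^4$ (cf.\ \cite{harris}): four pairwise skew lines, any three of which span $\PP^4$, that possess a common transversal are precisely the ruling of a smooth rational normal cubic scroll $S_{1,2}$ with that transversal as directrix (this is the $\PP^4$ analogue of Proposition \ref{propseppoints}(2)(c); the scroll cannot be a cone since the $\ell_i$ are pairwise skew, hence not concurrent, and by \cite[8.20]{harris} the directrix is the unique line meeting four or more of the ruling lines). Granting this, since $D$ is by construction a transversal to all five lines, any four of the $\ell_i$ form the ruling of such a scroll and the fifth meets its directrix.

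To build $D$, I would first convert special position into collinearity relations. For distinct $i,j,k$, the hypothesis that $\ell_i,\ell_j,\ell_k$ span $\PP^4$ forces $\ell_i\cap\langle\ell_j,\ell_k\rangle$ to be a single reduced point $r_i^{jk}$, through which every line meeting $\ell_i,\ell_j,\ell_k$ must pass. Fix $\{j,k\}\subset\{2,3,4,5\}$ with complement $\{l,m\}$ in $\{2,3,4,5\}$; the points $r_l^{jk},r_m^{jk}$ are distinct (they lie on the skew lines $\ell_l,\ell_m$) and span a line $D_{jk}\subset\langle\ell_j,\ell_k\rangle$. Every $2$-plane inside $\langle\ell_j,\ell_k\rangle\cong\PP^3$ that contains $D_{jk}$ automatically meets $\ell_j$ and $\ell_k$, and it meets $\ell_l,\ell_m$ since it contains $r_l^{jk},r_m^{jk}$; by special position it therefore meets $\ell_1$, necessarily at $r_1^{jk}$. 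As these planes form a pencil with base $D_{jk}$, this is only possible if $r_1^{jk}\in D_{jk}$, so $D_{jk}$ is a transversal to $\ell_1,\ell_l,\ell_m$ and hence passes through $r_1^{lm}$; comparing with $D_{jk}\cap\ell_1=\{r_1^{jk}\}$ yields $r_1^{jk}=r_1^{lm}$. Running this with each of the five lines in the distinguished role, $r_i^{jk}$ depends only on $i$ and the splitting of the remaining four indices into two pairs, leaving at most three candidate points on each $\ell_i$.

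The crucial step is to collapse these three points on each $\ell_i$ to one, equivalently to produce the common transversal. For this I would project from $q:=r_1^{23}=r_1^{45}\in\ell_1$: writing $\bar\ell_2,\dots,\bar\ell_5\subset\PP^3$ for the images, $2$-planes through $q$ correspond to lines of $\PP^3$, and the special position hypothesis says exactly that every line of $\PP^3$ meeting $\bar\ell_2,\bar\ell_3,\bar\ell_4$ also meets $\bar\ell_5$; moreover $q\in\langle\ell_2,\ell_3\rangle$ and $q\in\langle\ell_4,\ell_5\rangle$ give $\bar\ell_2\cap\bar\ell_3\neq\emptyset$ and $\bar\ell_4\cap\bar\ell_5\neq\emptyset$. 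A direct analysis of this configuration in $\PP^3$ should pin down the mutual positions of the $\bar\ell_i$ (morally, that they lie on a common ruling of a quadric, smooth or a cone) sufficiently to conclude that $q$ lies on $\langle\ell_j,\ell_k\rangle$ for every $\{j,k\}\subset\{2,3,4,5\}$, so that $q=r_1^{jk}$ for all such pairs. Doing this for each index produces points $d_1,\dots,d_5$ with $d_i=\ell_i\cap\langle\ell_j,\ell_k\rangle$ for all $j,k\neq i$; a last pencil-of-planes argument shows the $d_i$ are collinear, and the line $D$ they span meets every $\ell_i$.

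The main obstacle is this merging step. The pencil arguments create many collinear triples, but forcing them to assemble into one line — rather than a web of lines each meeting only three of the $\ell_i$ — is where the $\PP^3$ case analysis after projection is needed, essentially reducing matters to Proposition \ref{propseppoints}(2)(c); the delicate point is to carry it out using only the spanning hypothesis (which, as noted above, already guarantees that each $\ell_i\cap\langle\ell_j,\ell_k\rangle$ is a reduced point) rather than ad hoc genericity. Once $D$ is in hand, the cubic scroll through any chosen four of the lines, with directrix $D$, is given by the classical construction, and the fifth line meets $D$ by construction.
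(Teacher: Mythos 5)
Your strategy inverts the paper's: you try to produce the common transversal $D$ first by incidence combinatorics and only then quote the existence of the scroll, whereas the paper projects from a \emph{general} point $P\in\ell_5$, uses Proposition \ref{propseppoints}(2)(c) to place the images of $\ell_1,\dots,\ell_4$ on a smooth quadric, and then \emph{constructs} the scroll explicitly as the closure of the family of conics $C_t$ through $P$ and the four points $\Pi_t\cap\ell_i$, computing its degree to identify it as $S_{1,2}$. Your preliminary reductions are correct: each $\ell_i\cap\langle\ell_j,\ell_k\rangle$ is a single point, a common transversal to three spanning skew lines is unique if it exists, and the pencil-of-planes argument inside $\langle\ell_j,\ell_k\rangle$ does give $r_1^{jk}\in D_{jk}$ and hence $r_1^{jk}=r_1^{lm}$ for complementary pairs. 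But the proof has a genuine gap exactly where you flag it: nothing you prove merges the three remaining candidate points $r_1^{23}=r_1^{45}$, $r_1^{24}=r_1^{35}$, $r_1^{25}=r_1^{34}$ on $\ell_1$, and without that merge no common transversal exists and the argument does not start. Moreover, the reduction you gesture at is not the one you name: after projecting from $q=r_1^{23}=r_1^{45}$ the images satisfy $\bar\ell_2\cap\bar\ell_3\neq\emptyset$ and $\bar\ell_4\cap\bar\ell_5\neq\emptyset$, so they are \emph{not} in configuration (c) of Proposition \ref{propseppoints}(2); cases (a), (b) are excluded by the spanning hypothesis, case (e) does hand you the transversal, but case (d) remains, and ruling it out (or showing it forces the desired coincidence of points) is precisely the nontrivial geometric content that is missing.

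A second, smaller issue: even granting the common transversal, the statement you invoke at the end --- that four pairwise skew lines, any three spanning, with a common transversal are the ruling of a smooth $S_{1,2}$ with that transversal as directrix --- is an \emph{existence} claim for the scroll that is not what \cite[8.20]{harris} says (that reference concerns the uniqueness of the multisecant directrix of a \emph{given} scroll). A dimension count makes the claim plausible, but it requires an argument, and supplying that argument is essentially what the paper's explicit construction of the surface swept out by the conics $C_t$, together with the degree-three computation via the hyperplane $H_s$, accomplishes. As written, the proposal defers both the key merging step and the key existence step to unproved assertions, so it does not constitute a proof.
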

\begin{proof}
    Recall from \cite[p92-93]{harris} that in ${\mathbb P}^4$ we have, up to projective equivalence, two types of
    rational normal scrolls: the smooth cubic scroll $S_{1,2}$ and the singular cone $S_{0,3}$, which by
    \cite[p525]{griffithsharris} are the only irreducible non-degenerate surfaces of degree three in ${\mathbb P}^4$.
    Moreover, four lines in the ruling of $S_{1,2}$ satisfy the above genericity conditions and the projection of
    $S_{1,2}$ from a point outside the directrix is an $S_{1,1}$, i.e.\ a smooth quadric in ${\mathbb P}^3$, see
    \cite[8.20]{harris}. Note also, that in case $\ell_5$ is a line in the ruling of the scroll, the converse is also
    true. In fact, lines in the ruling of $S_{1,2}$ are in special position with respect to 2-planes, by
    \cite[3.7]{bastianelli}.
        
    Take the projection from a general point $P$ of $\ell_5$ to a general hyperplane $H$ not containing $P$ or any of
    the five lines. By the assumptions we claim now that the images $\ell'_i$ of the $\ell_i$'s, for $i=1,2,3,4$, are
    skew lines in ${\mathbb P}^3$ which are in special position with respect to lines. For the former claim, if two of
    the lines, say $\ell'_1, \ell'_2$, intersected then $P$ would belong in the secant variety (the span) of $\ell_1,
    \ell_2$. Since $P$ is generic this would imply that $\ell_5$ is in the span of $\ell_1, \ell_2$ which contradicts
    the assumptions. For the latter claim, see the proof of theorem \cite[3.3]{bastianelli}. In Proposition
    \ref{propseppoints} (2) we classified such configurations of lines, since as seen in the statement, four lines in
    ${\mathbb P}^4$ in special position with respect to 2-planes are lines in a hyperplane in special position with
    respect to lines. Since the $\ell'_i$'s do not intersect, we are in case (c) i.e.\ they are lines in a ruling
    ${\mathcal R}_1$ of a smooth quadric surface $Q$. We denote by $m_t$ the lines in the other ruling ${\mathcal R}_2$,
    $t\in {\mathbb P}^1$. For each $t$, we take the 2-plane $\Pi_t$ spanned by $m_t$ and the point $P$. This intersects
    the lines $\ell_1,\ldots, \ell_4$ in four points and we take a conic curve $C_t$ passing through those four
    points and the point $P$. Note that by the assumptions, any triple of lines $\ell_1,\ldots,\ell_5$ has at most one
    secant line, and therefore for the general $t$, no three of those five points are on a line, which implies that
    $C_t$ does not contain a line. Hence $C_t$ is smooth and is the unique conic passing through $P$ and the four points
    of intersection of $\ell_1,\ldots,\ell_4$ and $\Pi_t$. Define $S$ to be the closure of the total space of the family
    of the smooth $C_t$'s. This is an irreducible non degenerate surface that contains the lines $\ell_1,\ldots,\ell_4$
    and the point $P$. We claim that it is of degree three, which will imply that it is the scroll $S_{1,2}$ (note that
    since it contains four skew lines, it cannot be the scroll $S_{0,3}$). 

    Indeed, taking a general $s\in \mathcal{R}_2$ and noting that $\ell_1'$ and $\Pi_s$ meet, let $H_s$ be the 3-plane
    they span. For $t\neq s$ another point in $\mathcal{R}_2$, the plane $\Pi_t$ intersects $H_s$ exactly at the line
    $\ell_{PQ_t}$, with $Q_t$ the point of intersection of $\ell'_1$ with $m_t$: indeed this line is certainly contained
    in the intersection and if it were not the intersection, then $\Pi_t$ would be contained in $H_s$. As the lines
    $m_t, m_s$ are skew, they span the hyperplane $H$ which implies that $H_s$ would contain the span of $H$ and $P$
    which is the whole space, hence a contradiction, proving $\Pi_t\cap H_s = \ell_{PQ_t}$. Next, by moving $t$ we see
    that the intersection of the family of $\Pi_t$'s with $H_s$ is the union of the 2-plane $\Pi_s$ and the 2-plane
    spanned by $\ell'_1$ and the point $P$. Note that the total space of the family of $\Pi_t$'s is a rank four quadric
    (the quadric cone over $Q$ with vertex $P$) and therefore the intersection with a hyperplane is a surface of degree
    two. So the intersection between the quadric cone and $H_s$ coincides with $\Pi_s\cup\langle \ell_1',P\rangle$. But
    then the intersection of the two planes $\Pi_s$ and $\langle \ell'_1, P\rangle$ with $S$ is the union of the line
    $\ell_1$ and the conic $C_s$, which is overall a curve of degree three. Therefore $S$ has degree three, is
    irreducible and so as mentioned above is the rational normal scroll $S_{1,2}$ which projects from $P$ to the quadric
    surface $Q$.  Moreover $P$ is not contained in the directrix of $S$ because the projection of $S_{1,2}$ from a point
    of the directrix is the singular quadric $S_{0,3}$, see \cite[8.20]{harris}, whereas we already established that $Q$
    is smooth. Moreover the line $\ell_5$ has to intersect $S$ also at a point of the directrix $L$ of the scroll (e.g.\
    $\ell_5$ might be a line in the ruling of $S$). Indeed, otherwise consider the span of the two skew lines $L$ and
    $\ell_5$. This is a hyperplane and taking a point $P'$ outside this, the 2-plane spanned by $P'$ and $L$ intersects
    $\ell_1,\ldots,\ell_4$ but not $\ell_5$ which contradicts their special position.
\end{proof}

From now on, in order to simplify our analysis a bit, we will assume in addition that in the set of five lines,
no line or group of lines can be distinguished from the rest. More formally, for a set of lines
$\mathcal{A} := \{\ell_1,\ldots,\ell_5\}$ define
\begin{eqnarray*}
n(\ell_i)&=&\#\{\ell_j : j\neq i, \ell_i\cap\ell_j\neq\emptyset\}\\
m(\ell_i)&=&\max\{k : \ell_i\text{ and }k\text{ other lines in }\mathcal{A}\text{ span a 2-plane}\}.
\end{eqnarray*}

We will assume $\mathcal{A}$ satisfies the following property

\begin{longtable}{l p{13cm}}
(R) & We have $n(\ell_i)=n(\ell_j), m(\ell_i)=m(\ell_j)$ for any $i,j\in\{1,\ldots,5\}$.
\end{longtable}

The significance of the above in our situation is that if the lines correspond to the points of a general fibre of a
five-to-one {\em irreducible} cover of curves (in some parameter space like the Grassmannian), then they satisfy
property (R), e.g.\ by a monodromy argument as outlined in \cite[2.4]{bastianelli}.
 
\begin{proposition}\label{propfivelinestwospanaplane}
    Suppose we have five lines in ${\mathbb P}^4$ in special position with respect to 2-planes which also satisfy
    property (R). Assume that a pair of them spans a 2-plane. Then all of them are coplanar or all of them pass
    through the same point. 
\end{proposition}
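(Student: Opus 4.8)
The plan is to argue by cases on the number $r$ of lines in the largest coplanar subfamily of $\{\ell_1,\dots,\ell_5\}$. If $r$ of the lines lie in a common $2$-plane then $m(\ell_i)\ge r-1$ for those $i$, so property (R) forces $m(\ell_i)=r-1$ for \emph{every} $i$; hence each of the five lines lies in some family of exactly $r$ coplanar lines, and $r\ge 2$ by hypothesis. If $r=5$ all lines are coplanar and we are done. If $r=4$, take the $2$-plane $\Sigma$ containing four of the lines and a $2$-plane $\Sigma'$ containing the remaining line together with three others: $\Sigma$ and $\Sigma'$ share at least two lines, hence coincide, so all five lines are coplanar — contradicting $r=4$.

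For $r=3$ one uses (R) quantitatively. Two $2$-planes meet in at most a line, so two coplanar triples share at most one line; a short combinatorial check (a linear $3$-uniform hypergraph covering $\{1,\dots,5\}$) shows there are exactly two coplanar triples, meeting in a single line, say $\ell_1,\ell_2,\ell_3\subset\Sigma$ and $\ell_3,\ell_4,\ell_5\subset\Sigma'$ with $\Sigma\cap\Sigma'=\ell_3$. Then all five lines lie in the hyperplane $H=\langle\Sigma,\Sigma'\rangle\cong\PP^3$, and for lines lying in $H$ special position with respect to $2$-planes in $\PP^4$ implies special position with respect to lines in $H$. A general line of $\Sigma$ meets $\ell_1,\ell_2,\ell_3$, and it meets $\ell_4$ (resp.\ $\ell_5$) precisely when it passes through $\ell_3\cap\ell_4$ (resp.\ $\ell_3\cap\ell_5$); special position then forces $\ell_3\cap\ell_4=\ell_3\cap\ell_5$, and symmetrically $\ell_3\cap\ell_1=\ell_3\cap\ell_2$. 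Were these two points of $\ell_3$ distinct, $\ell_3$ would meet all four other lines whereas $\ell_1$ meets only $\ell_2,\ell_3$, contradicting $n(\ell_1)=n(\ell_3)$ from (R); so they coincide and all five lines pass through that point.

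The case $r=2$ means no three of the lines are coplanar. The intersection graph $G$ on $\{1,\dots,5\}$ (an edge for each meeting pair of lines) is then regular of degree $n=n(\ell_i)$ by (R), with $n\ge 1$; a regular graph on an odd number of vertices has even degree, so $n\in\{2,4\}$. If $n=4$ every two of the lines meet; any three of them pairwise meet but are not coplanar, hence are concurrent, and since the intersection points of $\ell_1,\ell_2,\ell_3$ and of $\ell_1,\ell_2,\ell_4$ both lie on the single point $\ell_1\cap\ell_2$, all five lines pass through a common point.

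The main obstacle is the remaining subcase $n=2$, where $G$ is a $5$-cycle; I expect this to be the technical heart. Relabel so that $\ell_i$ meets $\ell_{i\pm1}$ and is skew to $\ell_{i\pm2}$ (indices mod $5$). For $x\in\ell_{i+2}$, the $2$-plane $\langle x,\ell_i\rangle$ contains $\ell_i$ and hence the points $\ell_i\cap\ell_{i-1}$, $\ell_i\cap\ell_{i+1}$, so it meets $\ell_{i-1},\ell_i,\ell_{i+1},\ell_{i+2}$; special position then makes it meet $\ell_{i-2}$ as well, and since $\ell_i\cap\ell_{i-2}=\emptyset$ this is equivalent to $x\in\langle\ell_i,\ell_{i-2}\rangle$. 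Letting $x$ vary over $\ell_{i+2}$ gives $\ell_{i+2}\subset\langle\ell_i,\ell_{i-2}\rangle$, and combining these inclusions for all $i$ forces all five lines into a common hyperplane $H\cong\PP^3$. In $H$ the planes $\langle\ell_1,\ell_2\rangle$ and $\langle\ell_4,\ell_5\rangle$ are distinct (no three lines coplanar) and meet along a line $\mu$; this $\mu$ meets $\ell_1,\ell_2$ (it lies in $\langle\ell_1,\ell_2\rangle$) and $\ell_4,\ell_5$ (it lies in $\langle\ell_4,\ell_5\rangle$), so special position forces $\mu\cap\ell_3\ne\emptyset$; but $\mu\cap\ell_3\subseteq\langle\ell_1,\ell_2\rangle\cap\ell_3=\{\ell_2\cap\ell_3\}$ and likewise $\mu\cap\ell_3\subseteq\{\ell_3\cap\ell_4\}$, so $\ell_2\cap\ell_3=\ell_3\cap\ell_4$ and $\ell_2,\ell_4$ meet — contradicting that $G$ is a $5$-cycle. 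Hence $n=2$ does not occur, which finishes the proof. The delicate points will be the passage to $\PP^3$ via the planes $\langle x,\ell_i\rangle$ and, throughout, keeping careful track of which of the finitely many common transversals to four of the lines special position actually constrains.
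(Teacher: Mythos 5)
Your proof is correct, and the overall strategy coincides with the paper's: use (R) to make the incidence number $n$ and the coplanarity number $m$ constant, run a case analysis, and isolate the ``pentagon'' configuration as the crux. The execution differs in two worthwhile ways. First, you organize the cases by the maximal coplanar subfamily (i.e.\ by $m$) and then kill all odd values of $n$ at once by the handshake lemma (an $n$-regular graph on five vertices forces $n$ even); the paper uses parity only to exclude $n=1$ and rules out $n=3$ by a separate geometric argument with a hyperplane spanned by two skew lines, so your parity observation is a small simplification. Second, and more substantially, your treatment of the pentagon case is genuinely different: you first show via the planes $\langle x,\ell_i\rangle$ that special position forces $\ell_{i+2}\subset\langle \ell_i,\ell_{i-2}\rangle$, hence that all five lines lie in a common $\PP^3$, and then exhibit the line $\mu=\langle\ell_1,\ell_2\rangle\cap\langle\ell_4,\ell_5\rangle$ as a transversal to four of the lines which special position forces to meet $\ell_3$, yielding the impossible concurrency $\ell_2\cap\ell_3=\ell_3\cap\ell_4$. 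The paper instead stays in $\PP^4$ and produces three non-collinear points ($Q_1$, $P_{23}$, $Q_4$) lying on both $\langle\ell_1,\ell_2\rangle$ and $\langle\ell_3,\ell_4\rangle$, forcing those planes to coincide. The paper's argument is shorter; yours extracts the extra structural facts that a pentagonal configuration would be degenerate (contained in a hyperplane) and would carry a distinguished common transversal, which resonates with the scroll/quadric analysis of Propositions \ref{propfivelinesuniquesecant} and \ref{propfivelinesthreespanahyperplane}. All the individual steps I checked (the linear-hypergraph count of coplanar triples in the $r=3$ case, the reduction from special position with respect to $2$-planes to special position with respect to lines inside a hyperplane, and the identifications $\ell_3\cap\langle\ell_1,\ell_2\rangle=\{\ell_2\cap\ell_3\}$) are sound.
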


\begin{proof}
    In view of assumption (R), let $n=n(l_i)$ and $m=m(l_i)$. Then $1\leq m\leq n\leq 4$ and in addition, $n\geq 2$ as 
    the number of lines is odd. Notice that if four lines are concurrent then also the fifth must pass through the same
    point since they are in special position; otherwise considering the 2-plane spanned by the fifth line and the point of
    intersection of the first four, we see then that any 2-plane in $\mathbb{P}^4$ which intersects this plane in the
    aforementioned point of intersection contradicts special position. Moreover if $m\geq 3$ then $m=4$ by
    \cite[3.4]{bastianelli} (which in fact holds for any number of lines).

    We claim that if $n=4$, i.e.\ if any line meets all the others, then the assertion holds. Suppose indeed that the
    lines are not concurrent, then it is elementary to see that there exists a point through which exactly two lines
    pass, say $\ell_1$ and $\ell_2$. Since $n=4$, $\ell_3$ intersects $\ell_1$ and $\ell_2$ in distinct points, so
    $\ell_3$ lies in the plane $\langle\ell_1,\ell_2\rangle$ and the same happens for $\ell_4$ and $\ell_5$. Thus the
    five lines are coplanar. 

    The case $n=3$ cannot occur, as if any line meets exactly three of the others, then taking the hyperplane spanned by
    two non intersecting lines and a point $P$ outside this, we see that the 2-plane spanned by the first line and $P$ will
    not intersect the second line but it will intersect all other lines, contradicting special position.

    Thus we may assume $n=2$ and we need to only consider the cases $m=1$ and $m=2$. The case $m=2$ does not occur. Indeed,
    if $\ell_1, \ell_2, \ell_3$ span a plane, then so also do $\ell_4, \ell_5$, and the plane $\langle\ell_4, \ell_5\rangle$
    must contain another line, say $\ell_1$. Thus $\ell_1$ intersects all the other lines violating the condition that $n=2$.
    Finally we aim to rule out the case $n=2$ and $m=1$, i.e.\ when any line meets two of the others and no three lines
    are coplanar. Notice that if three of the lines were concurrent then they would not intersect the remaining lines,
    which would violate the condition $n=2$. Without loss of generality suppose thus that $\ell_1\cap \ell_5\neq
    \emptyset$ and $\ell_i\cap \ell_{i+1}\neq \emptyset$ for any $1\leq i \leq 4$, with $P_{ij}:=\ell_i \cap \ell_j$
    with $i<j$, whereas all other intersections are empty (the above configuration resembles the boundary lines of a
    pentagon in $\mathbb{P}^4$). Notice that the 2-plane $\langle \ell_3, \ell_4 \rangle$ intersects $\ell_2,\ldots,
    \ell_5$, so from special position there must exist a point $Q_1 \in \ell_1 \cap \langle \ell_3, \ell_4 \rangle $.
    Moreover $Q_1$ must lie on the line $\langle P_{23},P_{45} \rangle$ by the special position assumption. Analogously
    there exists a point $Q_4 \in \ell_4 \cap \langle \ell_1, \ell_2 \rangle$ lying on the line $\langle P_{23},P_{15}
    \rangle$. Notice that the points $Q_1, P_{23}, Q_4$ are not collinear and they all lie on both planes $\langle
    \ell_1, \ell_2 \rangle$ and $\langle \ell_3, \ell_4 \rangle$. Therefore these two planes coincide which contradicts
    the assumption $m=1$.
\end{proof}

\begin{proposition}\label{propfivelinesthreespanahyperplane}
    Suppose we have five lines in special position in ${\mathbb P}^4$ which also satisfy property (R).
    Suppose that no two of them intersect but some triple of them, say $\ell_1, \ell_2, \ell_3$, spans a hyperplane
    $H$. Then all five lines are contained in $H$. Let $Q\cong {\mathbb P}^1 \times {\mathbb P}^1$ be the secant surface
    of $\ell_1, \ell_2, \ell_3$ in $H$ and let ${\mathcal R}_1$ be the ruling containing $\ell_1, \ell_2 ,\ell_3$. We
    then have one of the following cases:
    \begin{enumerate}
        \item The five lines are all lines in the ruling ${\mathcal R}_1$. 
        \item The lines $\ell_4, \ell_5$ are not contained in $Q$ and in this case the five lines possess two or
        one common secant lines.
    \end{enumerate}
\end{proposition}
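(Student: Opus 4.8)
The plan is to prove the statement in two stages, both running on the same elementary device: since $\ell_1,\ell_2,\ell_3$ are pairwise skew, a $2$-plane $\Lambda\subset\PP^4$ meeting all three is either contained in $H$, or meets $H$ in a line $m$ which is then a common transversal of $\ell_1,\ell_2,\ell_3$, i.e.\ a line of the ruling $\mathcal{R}_2$ of $Q$. In the latter case, writing $\Lambda=\langle m,P'\rangle$ for a point $P'\notin H$, one checks that $\Lambda\cap H=m$, and hence $\Lambda\cap\ell=m\cap\ell$ for \emph{every} line $\ell\subset H$, independently of the choice of $P'$. Thus each instance of the special position hypothesis, for a given index $j$, becomes --- after choosing such an auxiliary $P'$ --- a purely planar statement inside $H\cong\PP^3$ about which lines of $\mathcal{R}_2$ meet the remaining $\ell_i$.

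\emph{Stage 1: all five lines lie in $H$.} Suppose not; after relabelling, $\ell_4\not\subset H$, so $\ell_4\cap H$ is a single point $p$, and since no two of the five lines meet, $p$ lies on none of $\ell_1,\ell_2,\ell_3,\ell_5$. I would then exhibit a $2$-plane $\Lambda\subset H$ meeting $\ell_1,\ell_2,\ell_3$ and $\ell_5$ but with $\Lambda\cap\ell_4=\Lambda\cap\{p\}=\emptyset$, contradicting special position for $j=4$: if $\ell_5\subset H$, take $\Lambda$ in the pencil of $2$-planes of $H$ through $\ell_5$ other than $\langle\ell_5,p\rangle$; if $\ell_5\not\subset H$, then $\ell_5\cap H=\{q\}$ with $q\ne p$ (otherwise $\ell_4$ and $\ell_5$ would meet at $p$), so take $\Lambda$ through $q$ but not $p$. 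In both cases $\Lambda$ meets $\ell_1,\ell_2,\ell_3$ automatically, being a $2$-plane in $H\cong\PP^3$ while the $\ell_i$ are lines in $H$.

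\emph{Stage 2: position relative to $Q$.} With all $\ell_i\subset H\cong\PP^3$, I would use the classical facts that three pairwise skew lines in $\PP^3$ lie on a unique smooth quadric $Q\cong\PP^1\times\PP^1$, that $\mathcal{R}_2$ is the full family of their transversals, and that two lines from opposite rulings of $Q$ always meet. Since no two of the five lines meet, neither $\ell_4$ nor $\ell_5$ lies in $\mathcal{R}_2$, so each of them lies in $\mathcal{R}_1$ or is not contained in $Q$. First I would rule out the asymmetric case, say $\ell_4\in\mathcal{R}_1$ and $\ell_5\not\subset Q$: then every $m\in\mathcal{R}_2$ is a transversal of $\ell_1,\dots,\ell_4$, so taking $m$ through none of the at most two points of $\ell_5\cap Q$ and any $P'\notin H$, the plane $\langle m,P'\rangle$ meets $\ell_1,\dots,\ell_4$ but not $\ell_5$, contradicting special position for $j=5$ (and symmetrically for $\ell_5\in\mathcal{R}_1$, $\ell_4\not\subset Q$). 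Hence either $\ell_4,\ell_5\in\mathcal{R}_1$ --- case (1) --- or $\ell_4,\ell_5\not\subset Q$ --- case (2). In case (2), applying the same device with $j=4$ shows that every $m\in\mathcal{R}_2$ meeting $\ell_5$ also meets $\ell_4$, and symmetrically the converse; so the set $\Sigma$ of lines of $\mathcal{R}_2$ meeting $\ell_4$ equals the set meeting $\ell_5$, and (as any common secant meets $\ell_1,\ell_2,\ell_3$, hence lies in $\mathcal{R}_2$) this common set is exactly the set of common secants of the five lines. Finally $\Sigma\ne\emptyset$ since $\ell_5\cap Q\ne\emptyset$ by B\'ezout, while $|\Sigma|\le 2$ since a line of $\mathcal{R}_2$ meets $\ell_4$ precisely when it passes through one of the at most two points of $\ell_4\cap Q$; so there are one or two common secants.

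I do not expect a genuine obstacle, as all the geometric inputs are classical. The one point to handle carefully is the reduction underlying the whole argument --- the identity $\Lambda\cap H=m$, and hence $\Lambda\cap\ell=m\cap\ell$ for lines $\ell\subset H$, for the auxiliary planes $\Lambda=\langle m,P'\rangle$ --- which is what turns the special position hypothesis, a priori about $2$-planes spread throughout $\PP^4$, into the finite combinatorial statement about $\mathcal{R}_2$ exploited above; beyond that it is simply a matter of keeping the case distinction (five lines into $H$, then $\ell_4,\ell_5$ against $Q$) organised.
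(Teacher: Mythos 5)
Your proof is correct and follows essentially the same route as the paper's: first force all five lines into $H$, then classify them against the quadric $Q$ and its two rulings, identifying the common secants with the lines of $\mathcal{R}_2$ through the (one or two) points of $\ell_4\cap Q$. The only differences are in presentation: where the paper cites Bastianelli's results to pass from special position with respect to $2$-planes in $\PP^4$ to special position with respect to lines in $H\cong\PP^3$ (and to force the second of $\ell_4,\ell_5$ into $H$ once the first is), you make that reduction explicit via the planes $\langle m,P'\rangle$, and you are rather more explicit than the paper in excluding the mixed case $\ell_4\in\mathcal{R}_1$, $\ell_5\not\subset Q$.
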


\begin{proof}
    If neither of $\ell_4, \ell_5$ is contained in $H$, then from special position both have to intersect $H$ at the
    same point which contradicts the assumptions. Therefore one of them is contained in $H$ and hence the other too by
    \cite[3.4]{bastianelli}. As a consequence we have five skew lines in $H$ which therefore have to be in special
    position with respect to lines in $H$. Note that the only lines that intersect $\ell_1, \ell_2, \ell_3$ are lines in
    the second ruling ${\mathcal R}_2$ of the quadric $Q$ (see Proposition \ref{propseppoints}). We see now that case
    (1) above can indeed occur. Suppose now that one of $\ell_4, \ell_5$, say $\ell_4$, is not a line in the ruling
    ${\mathcal R}_1$. If $\ell_4$ is contained in $Q$ then it is a line of the second ruling ${\mathcal R}_2$ which
    means it intersects $\ell_1$, contradicting the assumptions. Analogously $\ell_5$ is also not contained in $Q$. It
    follows that $\ell_4$ intersects the degree two surface $Q$, which is the secant variety of $\ell_1,\ell_2,\ell_3$,
    at one or at two distinct points and accordingly this implies that there is exactly one or two secants to the lines
    $\ell_1,\ldots,\ell_4$ - that is, the lines of the ruling $\mathcal{R}_2$ of the quadric going through the above
    points. From special position these have to intersect $\ell_5$ also, and we are thus in case (2). 
\end{proof}

The above complete the possibilities for five lines in special position in $\bP^4$ which satisfy property (R).

\section{The Klein cubic threefold}

The vanishing locus $X\subset \PP^4$ of the smooth cubic equation

$$ F = x_0^2x_1 + x_1^2x_2 + x_2^2x_3 + x_3^2x_4 + x_4^2x_0 $$
is called the \textit{Klein cubic}. It has especially nice properties as seen for example in \cite{adler},
\cite{roulleauklein}. To name a few, its Fano surface $S$ is of maximal Picard number $\rho=h^{1,1}=25$, and its Hessian
has beautiful geometry related to the modular curve $X_0(11)$.

In \cite[Sections 37-39]{adler} the singular locus of the Hessian hypersurface $H\subset\PP^4$ of the equation $F$ of
$X$ is studied, and an explicit resolution $\hat{H}\subset\PP^4\times\PP^4$ of singularities of $H$ is constructed. We
will first show that $X\cap H$ is also resolved by this birational morphism, and that this implies that $X\cap H$ is not
uniruled. First we describe the explicit resolution mentioned above. The Hessian matrix $M(F)$, of second partial
derivatives of $F$, is given for $\textbf{x}=(x_0,\ldots, x_4)$ by
$$
M_{\textbf{x}}(F) = 
2\begin{pmatrix}
    x_1 & x_0 & 0 & 0 & x_4 \\
    x_0 & x_2 & x_1 & 0 & 0 \\
    0 & x_1 & x_3 & x_2 & 0 \\
    0 & 0 & x_2 & x_4 & x_3 \\
    x_4 & 0 & 0 & x_3 & x_0 
\end{pmatrix}.
$$
As in \cite[Section 37]{adler}, if $\textbf{y}=(y_0,\ldots, y_4)^T$ then the vanishing of the system of
five quadrics

$$ \hat{H}:=\{(\textbf{x},\textbf{y}) : M_{\textbf{x}}(F)\cdot\textbf{y} = 0\} \subset \PP^4\times\PP^4 $$
endowed with the first projection $\pi:\hat{H}\to H$ gives the required resolution of $H$. 

\begin{lemma}
    Let $H$ be the Hessian hypersurface of the Klein cubic threefold $X$, let $B=X\cap H$ their intersection and
    $\pi:\hat{H}\to H$ the resolution described above. Then $\bar{\pi}:\hat{B}:=\pi^{-1}(B)\to B$ is a resolution of
    singularities of $B$ and both are irreducible. 
\end{lemma}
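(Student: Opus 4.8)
The plan is to deduce the statement from the explicit description of $\pi\colon\hat H\to H$ recalled from \cite[Sections 37--39]{adler}, together with a few soft arguments. From loc.\ cit.\ I will use that $H$ is irreducible with $\Sing(H)$ of dimension at most one, that $\hat H=\{(\mathbf x,\mathbf y):M_{\mathbf x}(F)\cdot\mathbf y=0\}\subset\PP^4\times\PP^4$ is smooth and irreducible, and that $\pi$ is proper and birational and restricts to an isomorphism over $U:=H\setminus\Sing(H)$. Moreover the fibre of $\pi$ over $\mathbf x\in H$ is the nonempty projective linear subspace $\PP(\ker M_{\mathbf x}(F))$, so every fibre of $\pi$ is connected.

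First I would dispose of the soft parts. The surface $B=X\cap H$ is a member of $|\cO_X(5)|$. As $X$ is a smooth hypersurface in $\PP^4$, $\operatorname{H}^1(X,\cO_X(-5))=0$ (the index $1$ lies strictly between $0$ and $\dim X=3$), and trivially $\operatorname{H}^0(X,\cO_X(-5))=0$, so the exact sequence $0\to\cO_X(-5)\to\cO_X\to\cO_B\to0$ gives $\operatorname{H}^0(B,\cO_B)=\bC$; hence $B$ is connected, and being a Cartier divisor on the smooth $X$ it is pure two-dimensional and Cohen--Macaulay. Since $\pi=\mathrm{pr}_1|_{\hat H}$ has image $H$, we have $\hat B:=\pi^{-1}(B)=\hat H\cap\{F(\mathbf x)=0\}$, and the fibres of $\bar\pi\colon\hat B\to B$ are exactly the fibres of $\pi$ over points of $B$, hence connected; as $B$ is connected, $\hat B$ is connected. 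Finally, because $\dim\Sing(H)\leq1<2=\dim B$ and $B$ is pure two-dimensional, no irreducible component of $B$ lies in $\Sing(H)$, so $B\setminus\Sing(H)$ is dense in $B$ and $\bar\pi$ is an isomorphism over it; in particular $\bar\pi$ is birational.

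The crucial point, and the main obstacle, is that $\hat B$ is smooth. Since $\hat B$ is a Cartier divisor on the smooth threefold $\hat H$, this may be checked pointwise. Conceptually it breaks into the transversality of $X$ and $H$ at the smooth points of $H$ lying on $B$ (which governs $\hat B$ over $U$) together with a local analysis of $\hat B$ along the exceptional locus $E=\pi^{-1}(\Sing H)$ using Adler's explicit coordinates near $E$. In practice the most robust route is to apply the Jacobian criterion directly to the ideal of $\hat B$ in $\PP^4\times\PP^4$, namely the cubic $F(\mathbf x)$ together with the five bilinear forms $M_{\mathbf x}(F)\cdot\mathbf y=0$ cutting out $\hat H$, and to verify that this scheme is everywhere regular of dimension two. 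For the Klein cubic all these equations are completely explicit, so this is a finite computation, most conveniently carried out by machine, and essentially all of the content of the lemma is concentrated here; note that via the isomorphism $\hat B\cap\pi^{-1}(U)\cong B\setminus\Sing(H)$, smoothness of $\hat B$ automatically entails transversality of $X$ and $H$ away from $\Sing(H)$.

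Granting smoothness, the rest is formal. A smooth connected variety is irreducible, so $\hat B$ is irreducible, and therefore so is its image $B=\bar\pi(\hat B)$. Moreover $B$ is generically reduced, being isomorphic under $\bar\pi$ to a dense open subset of the smooth $\hat B$, and it is Cohen--Macaulay, hence $B$ is reduced. Thus $\bar\pi\colon\hat B\to B$ is a proper birational morphism from a smooth irreducible surface onto the reduced irreducible surface $B$, and it is an isomorphism over the dense open $B\setminus\Sing(H)$, which is contained in the smooth locus of $B$; that is, it is a resolution of singularities of $B$, which is the assertion.
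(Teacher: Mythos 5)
Your proposal is correct, and it reaches the conclusion by a partly different route than the paper. The one step you and the paper share is the essential computational one: verifying by the Jacobian criterion (in practice by machine) that the six equations cutting out $\hat{B}$ in $\PP^4\times\PP^4$ have Jacobian of maximal rank along $\hat{B}$, hence that $\hat{B}$ is smooth. Where you diverge is in how the remaining assertions are obtained. The paper establishes irreducibility of $B$ by a second, independent Macaulay2 computation (a primality check on an affine chart plus a dimension count at infinity), and it obtains birationality of $\bar\pi$ by explicitly identifying $\Sing(B)$ with the sixty points of $X\cap C$, where $C$ is the degree-twenty singular curve of $H$ from \cite{adler}. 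You instead deduce connectedness of $B$ from the vanishing of $\operatorname{H}^1(X,\cO_X(-5))$, propagate it to $\hat{B}$ via the connected linear fibres $\PP(\ker M_{\mathbf x}(F))$ of $\pi$, and then get irreducibility of $\hat{B}$ (hence of its image $B$) for free from smoothness; birationality follows from the dimension count $\dim\Sing(H)\leq 1<2=\dim B$ together with equidimensionality of the Cartier divisor $B$. Your route eliminates one of the two computer verifications and is arguably cleaner; the paper's route yields strictly more information (the precise singular locus of $B$), though that extra information is not needed for the statement. Two small points worth making explicit in your write-up: that $X\not\subset H$ (so $B$ really is a member of $|\cO_X(5)|$), which follows for instance from Lemma \ref{triplepointisspecial}(2) applied to a general point of $X$; and that the claim that $\pi$ is an isomorphism precisely over $H\setminus\Sing(H)$ is either taken from \cite{adler} or justified by noting that the corank-$\geq 2$ locus of $M_{\mathbf x}(F)$ lies in $\Sing(H)$ because the partial derivatives of $\det M_{\mathbf x}(F)$ are combinations of its $4\times4$ minors.
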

\begin{proof}
    The equations of the preimage of $B$ under the resolution $\pi:\hat{H}\to H$ have Jacobian matrix (up to a constant)
    $$
    \begin{tiny}\begin{pmatrix}
    y_1 & y_0 & 0 & 0 & y_4 & x_1 & x_0 & 0 & 0 & x_4 \\
    y_0 & y_2 & y_1 & 0 & 0 & x_0 & x_2 & x_1 & 0 & 0 \\
    0 & y_1 & y_3 & y_2 & 0 & 0 & x_1 & x_3 & x_2 & 0 \\
    0 & 0 & y_2 & y_4 & y_3 & 0 & 0 & x_2 & x_4 & x_3 \\
    y_4 & 0 & 0 & y_3 & y_0 & x_4 & 0 & 0 & x_3 & x_0 \\
    2x_0x_1+x_4^2 & 2x_1x_2+x_0^2 & 2x_2x_3+x_1^2 & 2x_3x_4+x_2^2 & 2x_4x_0+x_3^2 & 0 & 0 & 0 & 0 & 0
    \end{pmatrix}.\end{tiny}
    $$
    A computation in Macaulay2 below shows that $B$ is irreducible by checking it is irreducible on the affine open
    $x_0\neq0$ and then, since it is a complete intersection and hence equidimensional, that its intersection with the
    hyperplane $x_0=0$ is one dimensional (which is much faster than checking whether the ideal of $B$ is prime). 
    It also computes that the above matrix is of maximal rank at every point of the preimage of $X$ and hence $\hat B$ is
    smooth. Since \cite[Section 38]{adler} proves that $H$ is singular along a smooth curve $C$ of degree twenty, one
    can compute that $B$ is singular precisely at the sixty distinct points of intersection of $X$ with $C$, showing
    that $\bar{\pi}:\hat{B} \to B$ is birational.

\noindent\begin{Verbatim}[fontsize=\small]
 R = QQ[x_0..x_4];
 F = x_0^2*x_1 + x_1^2*x_2 + x_2^2*x_3 + x_3^2*x_4 + x_4^2*x_0;
 partialsF = submatrix(jacobian matrix{{F}}, {0..4},{0});
 H=det submatrix(jacobian ideal partialsF, {0..4},{0..4});
 B=(ideal F)+(ideal H); isPrime substitute(B, x_0=>1)
 dim variety (B+(ideal x_0))
 P=variety ideal singularLocus B;
 print degree P; print HH^0(OO_P);
 R = QQ[x_0..x_4,y_0..y_4];
 F = x_0^2*x_1 + x_1^2*x_2 + x_2^2*x_3 + x_3^2*x_4 + x_4^2*x_0;
 Hhat = ideal {x_1*y_0+x_0*y_1+x_4*y_4, x_0*y_0+x_2*y_1+x_1*y_2, 
  x_1*y_1+x_3*y_2+x_2*y_3, x_2*y_2+x_4*y_3+x_3*y_4, x_4*y_0+x_3*y_3+x_0*y_4};
 Bhat = ideal(F)+Hhat; partials = jacobian Bhat;
 for i from 0 to 4 do (for j from 0 to 4 do (
 print dim (Bhat + minors(6, partials) + ideal (x_i-1, y_j-1));))
    \end{Verbatim} 
\end{proof}

\begin{proposition}\label{kleinnotuniruled}
    If $X$ is the Klein cubic threefold and $H$ its Hessian hypersurface, then $X\cap H$ is not uniruled. 
\end{proposition}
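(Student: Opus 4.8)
The plan is to exhibit a nonzero holomorphic two-form on a resolution of $B := X\cap H$ and then invoke birational invariance of uniruledness. By the preceding lemma, $\bar{\pi}:\hat B = \pi^{-1}(B)\to B$ is a resolution of singularities with $\hat B$ a smooth irreducible projective surface; moreover, by construction (see the Macaulay2 input above) $\hat B$ is the closed subscheme of $\PP^4\times\PP^4$ defined by the cubic $F$, of bidegree $(3,0)$, together with the five bilinear forms of bidegree $(1,1)$ cutting out $\hat H$.

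First I would note that $\hat B$ is a complete intersection in $\PP^4\times\PP^4$: it is smooth, hence Cohen--Macaulay, of dimension $2 = \dim(\PP^4\times\PP^4) - 6$, and it is cut out by the six equations above, so these form a regular sequence. Adjunction then applies: since $\omega_{\PP^4\times\PP^4} = \cO(-5,-5)$ and the normal bundle of $\hat B$ has determinant $\cO(8,5)$ (the sum of the bidegrees $(1,1)^{\times 5}$ and $(3,0)$), we obtain \[ \omega_{\hat B} \cong \cO_{\hat B}(3,0), \] the restriction to $\hat B$ of the pullback under the first projection $\PP^4\times\PP^4\to\PP^4$ of $\cO_{\PP^4}(3)$.

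Next I would check that $\omega_{\hat B}$ is effective. The first projection maps $\hat B$ onto the surface $B\subset\PP^4$, which is nondegenerate, being a complete intersection of type $(3,5)$; hence not every cubic form on $\PP^4$ vanishes on $B$, and pulling back such a form yields a nonzero section of $\omega_{\hat B}$. (In fact the cubics vanishing on $B$ are spanned by $F$ alone, so $p_g(\hat B) = h^0(\hat B,\omega_{\hat B}) \geq \binom{7}{3}-1 = 34$.) In particular $p_g(\hat B) > 0$.

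To conclude: a smooth projective uniruled variety $Y$ satisfies $H^0(Y,\omega_Y^{\otimes m}) = 0$ for all $m\geq 1$, so $\hat B$, having $p_g > 0$, is not uniruled; and since $\hat B$ is birational to $B = X\cap H$ (the exceptional locus of $\bar{\pi}$ lying over the finite singular locus of $B$), $X\cap H$ is not uniruled either. The only delicate point in this plan is the complete-intersection claim that legitimises adjunction; this is furnished precisely by the smoothness of $\hat B$ together with its explicit defining equations, both established in the lemma and its Macaulay2 verification. An alternative, using that $B$ is Gorenstein with $\omega_B\cong\cO_B(3)$ as a $(3,5)$ complete intersection, would instead require controlling discrepancies at the sixty singular points of $B$, which is why computing directly on $\PP^4\times\PP^4$ is preferable.
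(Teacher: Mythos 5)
Your argument is correct and is essentially the paper's: both use the resolution $\bar\pi:\hat B\to B$ from the preceding lemma, compute $\omega_{\hat B}\cong\cO(3,0)|_{\hat B}$ by adjunction in $\PP^4\times\PP^4$, and conclude non-uniruledness from positivity of the canonical bundle together with birational invariance. The only cosmetic difference is that you do the adjunction in one step for the codimension-six complete intersection and finish with $p_g(\hat B)>0$, whereas the paper first quotes from Adler--Ramanan that $\hat H$ is Calabi--Yau and then observes that $K_{\hat B}=\bar\pi^*\cO_H(3)$ is big.
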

\begin{proof}
    From the previous lemma, since $\hat{B}=\pi^{-1}(B)$ is a section of $\pi^*\cO_{H}(3)$, we compute by adjunction
    that $K_{\hat{B}} = K_{\hat{H}}|_{\hat{B}} + \bar\pi^*\cO_{H}(3)$. Now as pointed out in \cite[p139]{adler}, since
    $\hat{H}$ is the smooth intersection of five quadrics in $\PP^4\times\PP^4$, it is a smooth Calabi-Yau threefold,
    which implies $K_{\hat{H}}$ is trivial. In particular $K_{\hat{B}}$ is a big line bundle as it is the pullback of
    $\cO_H(3)$ under a birational morphism. Hence, since $B$ is birational to a smooth general type surface, it cannot
    be covered by rational curves.
\end{proof}

\begin{corollary}\label{generalhessiannotuniruled}
    Let $X$ be a general cubic threefold and $H$ its Hessian hypersurface in $\PP^4$. Then $X\cap H$ is not uniruled.
\end{corollary}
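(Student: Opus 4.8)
The plan is to deduce this from Proposition \ref{kleinnotuniruled} by a specialisation argument built on the behaviour of covering gonality in families. First I would set up the family: for a smooth cubic threefold $X$ not contained in its own Hessian the intersection $B_X := X\cap H$ is a complete intersection surface of type $(3,5)$ in $\PP^4$, and this condition (``$\dim(X\cap H)=2$'') is open by upper semicontinuity of fibre dimension, so it cuts out a dense open subset $U$ of the irreducible parameter space of smooth cubic threefolds; $U$ is nonempty because by the preceding Lemma it contains the Klein cubic $X_0$. Over $U$ the surfaces $B_X$ have constant Hilbert polynomial, hence form a flat family $\mathscr{B}\to U$ of projective surfaces, whose fibre over $[X_0]$ is the irreducible surface $X_0\cap H$ of that Lemma.

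Next I would argue by contradiction. Suppose $B_X$ is uniruled for $X$ general in $U$. Choose an irreducible curve $C\subseteq U$ through $[X_0]$ and through a general point of $U$, and restrict $\mathscr{B}$ over $C$ to obtain a flat family of projective surfaces over the pointed curve $(C,[X_0])$ whose members away from $[X_0]$ have covering gonality one. By Proposition \ref{propgonalitydrops} applied with $d=1$ (this case being also the content of \cite[IV 1.8.2]{kollar}) the special fibre $B_{X_0}=X_0\cap H$ then has covering gonality at most one, i.e.\ is covered by rational curves, contradicting Proposition \ref{kleinnotuniruled}. Equivalently, the locus $\{X\in U : B_X\text{ uniruled}\}$ is stable under specialisation, hence closed, and it is a proper closed subset of the irreducible variety $U$ since it misses the Klein cubic; so its complement is dense and the general cubic threefold $X$ has $X\cap H$ not uniruled.

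The step I expect to require most care is the invocation of Proposition \ref{propgonalitydrops}, which is stated there for flat families with \emph{irreducible} fibres, whereas a priori the surfaces $B_X$ need not all be irreducible along $C$. In the case at hand this is not a genuine obstruction: in the case $d=1$ the argument of that proposition only uses that a flat limit of a family of rational curves covering the general fibres is again a family of genus zero stable (hence tree-of-rational) curves covering the special fibre, which does not invoke irreducibility of the general fibres, and the contradiction is then extracted at the special fibre $B_{X_0}$, which we have already shown to be irreducible and whose resolution is birational to a surface of general type. Alternatively one could first check directly, using the irreducibility of the universal incidence variety $\{(X,p) : p\in X\cap H(X)\}$ over the space of cubics, that $B_X$ is irreducible for $X$ general, and then apply Proposition \ref{propgonalitydrops} verbatim along a general curve $C$ through $[X_0]$.
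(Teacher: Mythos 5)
Your proposal is correct and follows essentially the same route as the paper: the paper's proof is exactly the specialisation to the Klein cubic combined with the fact that uniruledness is a closed condition in families (citing \cite[IV.1.8]{kollar}), which is the $d=1$ case of Proposition \ref{propgonalitydrops}. Your additional care about setting up the flat family and about irreducibility of the fibres is sound but not needed beyond what the cited result of Koll\'ar already provides.
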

\begin{proof}
    Since from Proposition \ref{kleinnotuniruled} for the Klein cubic this is not the case and we know from
    \cite[IV.1.8]{kollar} that uniruledness is a closed condition in families, the result follows.
\end{proof}

\begin{remark}
    Note that if $X$ is the Fermat cubic threefold, then the Hessian is a union of five hyperplanes, so in particular
    its intersection with $X$ is the union (along Fermat cubic curves) of five smooth Fermat cubic surfaces so all its
    irreducible components are certainly uniruled.
\end{remark}

\section{Proofs of the theorems}\label{sectionproof}

The strategy of the proofs is the following. From Remark \ref{remsepfibreg1d} we know that if $\cC/T\to S$ is a covering
family of $d$-gonal curves, then the general fibre of the $g^1_d$ of the general member of this family will lie on a
$d-2$ plane under the canonical embedding. From Lemmas \ref{bva1}, \ref{gonalitydell} we know that the covering gonality
is at least three and at most four, in particular we need to only rule out the case of covering gonality three where
$X\subset\PP^4$ is a general cubic threefold. We will moreover carry out analysis in the case of tetragonal and
pentagonal covering families, leading to the uniqueness statements of Proposition \ref{covgonfour}. Since from
Proposition \ref{propseppoints} we have a complete classification of the possible configurations of three or four lines
which are in special position, we need only prove that the cases in which these configurations occur as the generic
fibre of the $g^1_d$ of a general member of $\cC/T$, can not occur for a covering family of the Fano surface of a
general cubic threefold except for the one family listed in Proposition \ref{covgonfour}. Certain cases can be ruled out
automatically, as for example four lines can not be contained in the intersection of a cubic with a 2-plane since a
smooth cubic threefold contains no 2-planes, but for more complicated ones we do this by degenerating to the Klein cubic
threefold.

\subsection{The proof of Theorem \ref{covgongen}}

As indicated in the discussion at the beginning of Section \ref{sectionproof}, to show that the general cubic threefold
has Fano surface with covering gonality at least four we need to exclude configuration (1) of Proposition \ref{propseppoints}. From
Lemma \ref{3.1}, if (1) does occur for the general fibre of the $g^1_3$ of the general member $\cC_t$ of a covering
family $\cC/T\to S$, then the intersection $H\cap X$ has to be uniruled. From Corollary \ref{generalhessiannotuniruled}
we thus obtain that $\cg(S)\geq4$ for the general $X$ and the theorem is proven. 

\begin{lemma}\label{3.1}
    Let $S$ be the Fano surface of a smooth cubic threefold $X$ and $H$ the Hessian of $X$. Assume that $\cC/T\to S$ is
    a covering family of trigonal curves. Then the intersection of $X$ with its Hessian hypersurface is a uniruled
    (possibly singular) surface. 
\end{lemma}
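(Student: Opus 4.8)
The plan is to combine the Cayley--Bacharach input (Proposition \ref{cb}) with the classification of three lines in special position (Proposition \ref{propseppoints}(1)) and with the characterization of the Hessian from Lemma \ref{triplepointisspecial}(2). Suppose $\cC/T\to S$ is a covering family of trigonal curves. By Remark \ref{remsepfibreg1d} and Proposition \ref{cb}, for a general $t\in T$ the image under $F$ of a general fibre of the $g^1_3$ on $\cC_t$ is a reduced $0$-cycle $\xi=[\ell_1]+[\ell_2]+[\ell_3]$ of three distinct points of $S$ which, viewed in the Pl\"ucker embedding $S\subset \mathrm{G}(2,5)\subset\PP^9$, satisfy the Cayley--Bacharach property with respect to hyperplanes, i.e.\ the three points $[\ell_i]$ are in special position with respect to $2$-planes in $\PP^4$. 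By Proposition \ref{propseppoints}(1), this forces the three lines $\ell_1,\ell_2,\ell_3\subset X$ to be coplanar and to pass through a common point, say $x=x(t)\in X$.

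Next I would argue that, as $t$ varies over a dense open subset of $T$, the point $x(t)$ sweeps out $X\cap H$ (or at least a dense subset of a two-dimensional component of it). Since $\ell_1,\ell_2,\ell_3$ are three of the six lines of $X$ through $x$ (Lemma \ref{triplepointisspecial}(1); note $x$ is not an Eckardt point for general $t$, since there are only finitely many such points and the family is covering, so $x(t)$ moves) and they are coplanar, Lemma \ref{triplepointisspecial}(2) gives precisely that $x\in H$. Conversely we get a rational map $T\dashrightarrow X\cap H$, $t\mapsto x(t)$. To see this map dominates a two-dimensional subvariety of $X\cap H$, note that $F:\cC\to S$ is dominant with general fibre of the $g^1_3$ mapping to $\xi$, hence the union over all $t$ of the lines $\ell_s$ for $s$ in the fibres of the $g^1_d$ is all of $S$ (up to a closed subset); since each such $\ell_s$ is one of the six lines through the corresponding point $x(t)$, and a general line of $X$ meets $X\cap H$ in finitely many points, the assignment $s\mapsto x(t(s))$ cannot be constant along a curve in $S$, so the image of $T\dashrightarrow X\cap H$ is at least one-dimensional; a slightly more careful count (each fibre of the $g^1_3$ on $\cC_t$ gives a single point $x(t)$, and these fibres are parametrized by $\PP^1$, while $T$ has dimension one more than needed to cover $S$ — cf.\ \cite[1.5]{bdelu}) shows the image is two-dimensional, hence a dense subset of a surface component of $X\cap H$. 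Since $X\cap H$ is the surface cut out in $X$ by a single equation of degree five, it is pure two-dimensional, so this forces $X\cap H$ to be dominated by the family.

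Finally, I would produce the ruling. For general $t$, the three coplanar concurrent lines $\ell_1,\ell_2,\ell_3$ through $x(t)$ span a $2$-plane $\Lambda_t\subset\PP^4$ with $x(t)\in\Lambda_t$, and by Lemma \ref{triplepointisspecial}(2) the remaining three lines through $x(t)$ are coplanar as well, spanning a second $2$-plane $\Lambda_t'$. The residual conic to $\ell_1$ in $\Lambda_t\cap X$ is $\ell_2\cup\ell_3$, so $\Lambda_t\cap X=\ell_1\cup\ell_2\cup\ell_3$ is a union of three lines through $x(t)$, giving a one-parameter family of lines on $X\cap H$ through each general point $x(t)$ — more usefully, the plane $\Lambda_t$ itself meets $X\cap H$ in a curve containing the three lines, and as $t$ varies the lines $\ell_i(t)$ sweep out $X\cap H$. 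Thus $X\cap H$ is covered by the (rational) curves $\ell_i(t)$, i.e.\ it is uniruled. The main obstacle I anticipate is the dimension count showing the map $t\mapsto x(t)$ has two-dimensional image (rather than one-dimensional), i.e.\ ruling out the degenerate possibility that infinitely many fibres of the family map to lines through a single point — this is where one needs to use that the family genuinely covers $S$ together with the fact that a general line of $X$ is not contained in $X\cap H$ (equivalently, $X\cap H\neq X$, which holds since $\deg H=5>0$), plus Fact \ref{roulleauellcurves} to discard the Eckardt-point locus.
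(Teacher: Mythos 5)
Your first two steps track the paper's proof: special position plus Proposition \ref{propseppoints}(1) forces each general fibre of the $g^1_3$ to correspond to three coplanar concurrent lines, Lemma \ref{triplepointisspecial}(2) puts the concurrency point $x$ on the Hessian, and the density of these points in $X\cap H$ follows because otherwise all lines of the covering family would meet a fixed curve in $X$, which (away from the finitely many Eckardt points) only accounts for a divisor's worth of points of $S$. Your dimension count is phrased loosely but this is the right mechanism and is essentially the paper's.

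The final step, however, is wrong. You propose to rule $X\cap H$ by the lines $\ell_i(t)$ themselves, writing that $\Lambda_t$ ``meets $X\cap H$ in a curve containing the three lines'' and that ``$X\cap H$ is covered by the (rational) curves $\ell_i(t)$.'' But the lines $\ell_i$ are not contained in $X\cap H$: a general line of $X$ meets the degree-five Hessian in exactly five points (this is precisely the observation in the proof of Proposition \ref{bva1notbva2}), so $\Lambda_t\cap X\cap H=(\ell_1\cup\ell_2\cup\ell_3)\cap H$ is a finite set, not a curve. The lines sweep out $X$, not $X\cap H$, and uniruledness of $X\cap H$ requires rational curves lying \emph{inside} $X\cap H$ through its general point. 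The correct ruling --- which you have implicitly in hand but do not use --- is by the images of the bases $\PP^1$ of the trigonal pencils: for fixed general $t$, the assignment $y\mapsto x(t,y)$ sending a point of the base of the $g^1_3$ on $\cC_t$ to the common intersection point of the corresponding three lines defines a non-constant morphism $\PP^1\to X\cap H$ (non-constant because a constant image would be an Eckardt point, of which there are finitely many), and varying $t$ these rational curves cover $X\cap H$ by your density argument. Replacing your last paragraph with this yields the paper's proof.
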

\begin{proof}
    We know from Proposition \ref{propseppoints}, Remark \ref{remsepfibreg1d} and the fact that there can only be
    finitely many Eckardt points (and hence at worst a divisor in $S$ parametrising lines through Eckardt points) that
    the general fibre of the $g^1_3$ of the general member $\cC_t$ will consist of three distinct points
    $[\ell_1],[\ell_2],[\ell_3] \in S$ so that the $\ell_i$ are coplanar and pass through the same point. From the
    second part of Lemma \ref{triplepointisspecial}, this point of intersection is contained in the intersection $X\cap
    H$ with the Hessian hypersurface of $X$. This way we obtain a map $\PP^1\dashrightarrow X\cap H$ which extends to a
    morphism, where a general point $y\in\PP^1$ is sent to the unique intersection point $x\in X$ of the lines
    $\ell_1,\ell_2,\ell_3$ parametrised by the fibre over $y$ of the trigonal map. Varying the point $t\in T$, by
    construction we necessarily obtain a one dimensional family of rational curves in $X\cap H$ since there are only
    finitely many points in $X$ with infinitely many lines through them (Eckardt points). This gives the result. 
\end{proof}

\begin{remark}
    Alternatively, one could conclude from the above argument that the Fano surface of the Klein cubic does not have
    covering gonality three, and then degenerate covering gonality instead of uniruledness from Proposition 
    \ref{propgonalitydrops} to obtain the result for the general $X$.
\end{remark}

\subsection{The proof of Proposition \ref{covgonfour}}

To exclude covering gonality four for families other than the second type incidence divisors, we work through the
various configurations of Proposition \ref{propseppoints}. Configurations (a), (b) of the second part of Proposition
\ref{propseppoints} can be excluded from the fact that the lines are contained on a smooth cubic threefold. On the other
hand we will prove that the only covering family for which configuration (e) occurs is the family of incidence divisors
of second type lines of Lemma \ref{gonalitydell}. Excluding (c), (d) relies on geometric properties of the general $X$
and the fact that such configurations of points would have to appear as a general fibres of the $g^1_4$ of the general
member of a tetragonal covering family.

Indeed if one of configurations (a), (b) holds for four points in $S$, then since every 2-plane in $\PP^4$
meeting $X$ in four lines has to be contained in $X$, we obtain a contradiction to the Fano surface being irreducible
(or see \cite[1.17]{murre}). 

If configuration (c) holds, then as mentioned in Proposition \ref{propseppoints}, the four lines lie in the same
ruling of a smooth quadric $Q\cong\PP^1\times\PP^1$. Any line in the other ruling of the quadric will thus meet all four
lines and so meets the cubic $X$ in at least four points, which implies it must be contained in $X$. This implies the
whole quadric is contained in $X$. Therefore the hyperplane containing the four lines intersects
$X$ in this quadric and a residual 2-plane which as in the cases above is a contradiction. 

We will next prove that configuration (d) of Proposition \ref{propseppoints} does not happen for the general
fibre of the $g^1_4$ of the general member $\cC_t$ of $\cC/T\to S$ a covering family of tetragonal curves for a general
cubic threefold $X$. To this aim, assume $[\ell_1],\ldots,[\ell_4]\in S$ a general fibre of the $g^1_4$ of $\cC_t$ so
that the $\ell_i$ are in configuration (d). Denote by $\Pi=\langle\ell_1,\ell_2\rangle,
\Pi'=\langle\ell_3,\ell_4\rangle$ respectively where $\Pi \cap \Pi' = PP'$. Note that since $t$ is general, we may
assume that $P,P'$ are not Eckardt points, of which there are only finitely many. Since $\Pi$ contains two lines in $X$
meeting at $P$ we have that $\Pi \subset T_PX$ (similarly $\Pi' \subset T_{P'}X$). But then the line $PP'$ is contained
in $X$: indeed, $PP'\subset T_PX\cap T_{P'}X$ and so $PP'$ meets $X$ with multiplicity at least two at each
point and therefore $\ell_5:=PP'\subset X$ (see Figure \ref{sketch}). Note that Lemma \ref{triplepointisspecial} implies
that $P,P'$ lie on the Hessian of $X$. We thus obtain a map $\PP^1\dashrightarrow S$, by sending a point on $\PP^1$ (the
base of the $g^1_4$) to the corresponding $[\ell_5]$ of the fibre of the $g^1_4$ of $\cC_t$, which extends to a morphism.
Since $S$ does not contain any rational curves from Fact \ref{factnortlcurves}, this morphism is constant. Therefore to
the general member $\cC_t$ we associate a fixed line $\ell_{5,t}$. Since there are at most six lines through all but
finitely many points of $X$ (by Lemma \ref{triplepointisspecial}) as we vary along the fibres of the $g^1_4$ on $\cC_t$,
the corresponding points $P,P'$ vary continuously and thus they cover the line $\ell_{5,t}$. This implies that the line
$\ell_{5,t}$ is contained in the intersection $X\cap H$ with the Hessian of $X$. Note also that the image of $\cC_t$ is
a component of the incidence divisor $D_{\ell_{5,t}}$. As we vary $t\in T$ the induced lines $\ell_{5,t}$ vary
continuously, otherwise the image of $\cC$ in $S$ would be contained in a divisor. In particular $H\cap X$ is uniruled
(by lines), which does not happen for the general $X$ from Corollary \ref{generalhessiannotuniruled}.

Finally, to show that configuration (e) only occurs for the family of incidence divisors of second type lines
for any smooth cubic threefold $X$ one proceeds as follows. Assume we have a covering family of tetragonal curves. To
spell out the assumption, we know that the general fibre of the $g^1_4$ of the general member $\cC_t$ will consist of
four points $[\ell_1],\ldots,[\ell_4]\in S$ so that the $\ell_i$ pass through the same point and no three of which are
coplanar. For a fixed $t\in T$, like with three lines through a point in Lemma \ref{3.1}, we obtain a morphism
$f_t:\bP^1\to X$, with image $R=R_t$, from the base of the $g^1_4$, so that through a general point in the image, four
of the lines correspond to the points in the fibres of the $g^1_4$. Since by Lemma \ref{triplepointisspecial}, there are
six lines through all but finitely many points in $X$, we obtain a residual two $L_r,M_r$ through every point $r\in R$. 

If one of the two, say $L_r$, is constant as we vary $r$, then we can define a map $R\dashrightarrow S$ sending $r$ to
$[M_r]$, but this necessarily has constant image by Fact \ref{factnortlcurves} so $M_r$ is also constant. If $L_r$ and
$M_r$ are both constant as we vary $r$, they have to both coincide with $R$ as they each intersect $R$ at every point
and $R$ is irreducible. Hence $L:=M_r=L_r$ is of second type from Fact \ref{factlines}, and the covering family is by
construction the family of Lemma \ref{gonalitydell}.

We may now assume that $L_r$ and $M_r$ both move. This traces out a curve $H_t \subset S$ defined as follows: take the
curve $\tilde{H}_t $ in $S \times S$ consisting of the pairs of points $([L_r],[M_r])$ and $([M_r],[L_r])$, $r\in R$. If
$p:S\times S \to S$ is the first projection then define $H_t$ to be the (reduced) image of this map. We next show that
$\tilde{H}_t$ is irreducible. Indeed, otherwise it would consist of two rational components (since it is hyperelliptic)
both of which get contracted to fixed points in $S$ (from Fact \ref{factnortlcurves}) which is a contradiction to the
assumption that the two lines move. Thus we may assume that $\tilde{H}_t$ is irreducible and let $\tilde{h}: \tilde{H}_t
\to R_t$ be the natural hyperelliptic map. Then $H_t$ is hyperelliptic too as the image under a finite map of a
hyperelliptic curve. The induced hyperelliptic map $h: H_t \to \bP^1$ commutes with $\tilde{h}$, i.e.\ there is a finite
map $\phi: \bP^1 \to \bP^1$ with $\phi \circ \tilde{h}= h \circ p$. We actually claim that the degree of the map $p:
\tilde{H}_t\to H_t$ is one. Otherwise there exist pairs of lines $([L],[M])$ and $([L],[M'])$ with $L,M,M'$ all
distinct. Since $\tilde{h}([L],[M])=\tilde{h}([M],[L])$ we get $\phi\tilde{h}([L],[M])=\phi\tilde{h}([M],[L])$ and so
$hp([L],[M])=hp([M],[L])$, i.e.\ $h([L])=h ([M])$; similarly, $h([L])=h([M'])$ and therefore the three distinct points
$[L],[M],[M']$ of $H_t$ are in the same fibre of the hyperelliptic map which is a contradiction. 

Now we let $t\in T$ vary and we may assume (after a finite base change) from Lemma \ref{dgonalcovfamily} that we have a
family $\mathcal{H}\to T$ of smooth hyperelliptic curves so that $\mathcal{H}_t$ has image $H_t$ in $S$. From Lemma
\ref{bva1} since $S$ is not covered by hyperelliptic curves, the image of $\mathcal{H}\to S$ is one dimensional. Hence
there has to be an open subset $U\subset T$ so that every fibre of $\mathcal{H}|_U\to U$ dominates a fixed irreducible
component $H$ of the above image, which necessarily has to be hyperelliptic as the image of hyperelliptic curves. In
fact one sees that $H$ admits infinitely many $g^1_2$: this follows since the data of $R_t$ along with two lines through
a point $r\in R_t$ is equivalent to that of a $g^1_2$ on $H$, and since $R_t$ moves in a family in $X$, we obtain
different pairs of lines. Now since $H$ has infinitely many $g^1_2$, so does its resolution which implies that $H$ has
geometric genus one or zero. As we have seen before, the latter cannot happen as $S$ does not contain any rational
curves from Fact \ref{factnortlcurves}. On the other hand if $E\to S$ the induced morphism from an elliptic curve
resolving $H$, it has to be an embedding from Lemma \ref{ellcurveembedded} and so $E=H$ is contained smoothly in $S$.
From Fact \ref{roulleauellcurves} this does not happen for the general cubic threefold.

Moving on to pentagonal covering families, note first of all that the family of incidence divisors to lines of first
type does indeed have a natural $g^1_5$ from Lemma \ref{gonalitydell}. To prove now that this family of curves is
unique, we work as above through the various cases of Propositions \ref{propfivelinesuniquesecant},
\ref{propfivelinestwospanaplane}, \ref{propfivelinesthreespanahyperplane}, showing in each of them that if there exists
a pentagonal covering family with such a configuration, then the curves themselves must be incidence divisors $D_\ell$
with a possibly different $g^1_5$ than the natural one above.

Assume that the general fibre of the general member of a pentagonal covering family of curves corresponds to five lines
in the configuration of Proposition \ref{propfivelinesuniquesecant}. Suppose that the fibres of the general member $C$
are parametrized by $t$ in the base ${\mathbb P}^1$ of the $g^1_5$. Then from Proposition
\ref{propfivelinesuniquesecant}, to the generic $t$ there corresponds a unique 5-secant line $\ell_t$, which has to be
contained in the cubic threefold since it intersects it at five points. In particular we get an induced morphism
$\mathbb{P}^1\to S$ sending $t$ to $\ell_t$ which as before must be constant from Fact \ref{factnortlcurves}. Therefore,
the $\ell_t$'s are all equal to a constant line $\ell$ and by construction the corresponding divisor which is the image
of $C$ in the Fano surface is the incidence divisor $D_\ell$. Note that from Fact \ref{factlines}, since the five lines are
distinct generically, $\ell$ is necessarily of first type.

Assume next that the general fibre is in one of the configurations of Proposition \ref{propfivelinestwospanaplane}. The
case of five coplanar lines cannot occur - this can be ruled out like in cases (a), (b) of Proposition
\ref{propseppoints} above. Suppose now the five lines corresponding to the general fibre of the $g^1_5$ of the general
member $C$ of the covering family pass through a point $A$. Then as before, we would obtain a morphism from the base
$\mathbb{P}^1\to S$ sending $t$ to the sixth line passing through $A$, which would necessarily be constant from Fact
\ref{factnortlcurves}. In other words the sixth line of $X$ passing through $A$ has to be the same line $\ell$ for all
the fibres of the $g^1_5$ and the curve $C$ therefore maps to the divisor $D_\ell$ in $S$. 

Finally assume that we are in one of the configurations of Proposition \ref{propfivelinesthreespanahyperplane}. For case
(1), we argue as we did above for case (c) of Proposition \ref{propseppoints}, namely by observing that necessarily
$Q\subset X$ which leads to a contradiction. For case (2), observe first that the secants are contained in the cubic
threefold $X$. If the five lines admit a unique secant line, we can define a map $\bP^1\to S$ and conclude as usual.
Otherwise if the five lines admit exactly two secant lines then we can construct a hyperelliptic curve in the Fano
surface in a similar way as above where we ruled out case (e) of Proposition \ref{propseppoints}. Again this will have
infinitely many $g^1_2$'s since two lines have finitely many secants contained in $X$, corresponding to the intersection
of their incidence divisors, so the two secants must move as we vary along the base of the covering family of curves. We
conclude by noting that there are no elliptic curves in the Fano surface of a general cubic threefold.

\subsection{The proof of Theorem \ref{degofirr}}

If $X$ a smooth cubic threefold and $S$ its Fano surface, then a general hyperplane $H=\PP^3$ cuts $X$ in a smooth cubic
surface $Y$. There is a natural map $S\dashrightarrow Y$ taking an $s\in S$ and giving the point of intersection of the
line $\ell_s$ with the hyperplane $H$. This is generically of degree six since for a general point of $Y\subset X$ there
are six lines through it which are contained in $X$ from Lemma \ref{triplepointisspecial}. On the other hand, the cubic
surface $Y\subset\PP^3$ is rational. In particular the composition $S\dashrightarrow Y\to\PP^2$ has degree six and so
$\irr(S)\leq 6$.

Now, assume that $\cng(S)=4$. Then there has to be an at least one dimensional family of tetragonal curves through the
general point of $S$ (i.e.\ a two-dimensional variety $T$ and a tetragonal family of curves $\cC\to T$ making the
induced double evaluation morphism $\cC\times_T\cC\to S\times S$ dominant). This however is not possible for $X$ general
since the unique tetragonal covering family of incidence divisors of second type lines moves in a one dimensional family
from Proposition \ref{covgonfour}. Since $\cng(S)\leq\irr(S)$ we obtain $\cng(S)=5\leq \irr(S)\leq6$.

Finally, if $\irr(S)=5$ then there exists a dominant rational map $F:S \dashrightarrow {\mathbb P}^2$ which is generically
finite of degree five. Then through a general point $x\in S$ there passes a 4-dimensional family of curves of gonality
less than or equal to five - indeed, through the point $F(x)$ there passes a 4-dimensional family of conics and the
preimage of this gives the desired family. On the other hand Proposition \ref{covgonfour} proves that the only 5-gonal
covering family of curves is that of incidence divisors to lines, and the base of this family is only two dimensional, a
contradiction.

\bibliographystyle{alpha}

\newcommand{\etalchar}[1]{$^{#1}$}

\end{document}